\def\q{\hfill\rule{1ex}{1ex}}
\def\0{\emptyset}
\def\q{\hfill\rule{1ex}{1ex}}
\def\QEDopen{{\hfill\setlength{\fboxsep}{0pt}\setlength{\fboxrule}{0.2pt}\fbox{\rule[0pt]{0pt}{1.3ex}\rule[0pt]{1.3ex}{0pt}}}}
\newtheorem{theorem}{Theorem}[section]
\newtheorem{lemma}[theorem]{Lemma}
\newtheorem{cor}[theorem]{Corollary}
\newenvironment{provekplF}{{\noindent\it Proof of Theorem  \ref{thm: ex(n,kpl,F)}}.}{\hfill $\square$\par}
\newenvironment{provekplF=1}{{\noindent\it Proof of Theorem  \ref{thm: when beta F=1}}.}{\hfill $\square$\par}
\newenvironment{proof}{{\noindent\it Proof.}}{\hfill $\square$\par}
\newcommand{\lf}{\left\lfloor}
\newcommand{\rf}{\right\rfloor}
\newcommand{\lc}{\left\lceil}
\newcommand{\rc}{\right\rceil}
\newcommand{\lchuto}{\lf\frac{\ell}{2}\rf}
\newcommand{\lchutoo}{\lc\frac{\ell}{2}\rc}
\newcounter{cases}
\newcounter{subcases}[cases]
\begin{document}


\title{Tur\'an type problems for a fixed graph and a linear forest}
\author{
   {\small\bf Haixiang Zhang}\thanks{email:  zhang-hx22@mails.tsinghua.edu.cn}\quad
    {\small\bf Xiamiao Zhao}\thanks{Corresponding author: email:  zxm23@mails.tsinghua.edu.cn}\quad
    {\small\bf Mei Lu}\thanks{email: lumei@tsinghua.edu.cn}\\
    {\small Department of Mathematical Sciences, Tsinghua University, Beijing 100084, China.}\\
}

\date{}

\maketitle\baselineskip 16.3pt

\begin{abstract}
Let $\mathscr{F}$ be a family of graphs. A graph $G$ is $\mathscr{F}$-free if  $G$ does not contain any $F\in \mathscr{F}$  as a subgraph. The Tur\'an number, denoted by $ex(n, \mathscr{F})$, is the maximum number of edges in an $n$-vertex $\mathscr{F}$-free graph.
 Let \( F \) be a fixed graph with \( \chi(F) \geq 3 \). A forest $H$ is called a linear forest if all components of $H$ are
  paths. In this paper, we determined the exact value of \( ex(n, \{H, F\}) \) for a fixed graph $F$ with $\chi(F)\geq 3$ and a linear forest $H$ with at least $2$ components and each component with size at least $3$.
\end{abstract}


{\bf Keywords:} Tur\'an number; path; linear forest; chromatic number
\vskip.3cm

\section{Introduction}
In this paper, we only consider finite, simple and undirected graphs. Let $G=(V,E)$ be a graph, where $V$ is the vertex set and $E$ is the edge set of $G$. We use $v(G)$ and $e(G)$ to denote the order and size of $G$, respectively. Let $G$ and $H$ be two graphs. We use $G\cup H$ to denote the vertex disjoint union of  $G$ and $H$ and $G+H$ is the graph obtained from $G\cup H$ by adding all edges between $V(G)$ and $V(H)$. For a positive integer $k$, let $k H$ be the disjoint union of $k$ copies of  $H$. Let $I_{k}$ be an independent  set with $k$ vertices, $M_{s}$  a matching  of size $s$, $M_{k}^{t}$  a matching of size $t$ on $k$ vertices, $P_\ell$ a path with $\ell$ vertices and $K_m$  a complete graph with $m$ vertices. Let $A\subseteq V(G)$. Then $G[A]$ denotes the subgraph induced by $A$. For  $v\in V(G)\setminus A$, let $N_A(v)=\{u\in A:uv\in E(G)\}$ and $d_A(v)=|N_A(v)|.$ Let $S\subseteq V(G)$. Denote $N(S)=(\cup_{v\in S}N(v))\setminus S$.
Let $E'\subseteq E(G)$. We call $E'$ the {\em edge control set} of $G$ if for every $e\in E(G)\setminus E'$, there is $e'\in E'$ such that $e$ is incident with $e'$. The {\em edge control number} of $G$, denoted by $\beta_1(G)$, is the minimum size of edge control sets of $G$. We will use $\nu(G)$ to denote the matching number of $G$.

Let $\mathscr{F}$ be a family of graphs. A graph $G$ is $\mathscr{F}$-free if  $G$ does not contain any $F\in \mathscr{F}$  as a subgraph. The Tur\'an number, denoted by $ex(n, \mathscr{F})$, is the maximum number of edges in an $n$-vertex $\mathscr{F}$-free graph. If $\mathscr{F}=\{F\}$, we write $ex(n, F)$ instead of  $ex(n, \mathscr{F})$.
One of the earliest results concerning Tur\'an number is due to Tur\'an \cite{turan1954}, who showed that $ex(n, K_{k+1} ) = e (T (n, k))$, where the Tur\'an graph $T (n, k)$ is the complete $k$-partite $n$-vertex graph with each part of order $\lc\frac{n}{k}\rc$ or $\lf\frac{n}{k}\rf$.  When $k\geq n$, we will set $e (T (n, k))={n\choose 2}$. Simonovits \cite{simonovits1968method} showed that for large $n$, the extremal graph forbidding $p K_r$ is $K_{p-1}+T_{r-1}(n-p+1)$. Gorgol \cite{gorgol2011turan} initiated the Tur\'an numbers of disjoint copies of connected graphs and proved the following result.
\begin{theorem}[\cite{gorgol2011turan}]
    Let $G$ be an arbitrary connected graph on $n$ vertices and $p$ be an arbitrary positive integer. Then $ex(m,pG)\leq ex(m-(p-1)n,G)+{(p-1)n\choose 2}+(p-1)n(m-(p-1)n)$ for $m\geq pn$.
\end{theorem}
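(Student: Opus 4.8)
The plan is to bound the number of edges of an arbitrary $m$-vertex $pG$-free graph $H$ by exhibiting a small vertex set whose removal leaves a $G$-free graph, and then to count edges according to the resulting partition. First I would choose a maximum collection of pairwise vertex-disjoint copies of $G$ in $H$. Since $H$ is $pG$-free, this collection consists of at most $p-1$ copies, so the set $S$ of vertices it covers satisfies $|S|\le (p-1)n$. Maximality is exactly the property I need: $H-S$ contains no copy of $G$, for otherwise that copy together with the at most $p-1$ copies covering $S$ would yield $p$ pairwise disjoint copies of $G$, contradicting that $H$ is $pG$-free.

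The key step is then to enlarge $S$ to a set $S'$ of exactly $(p-1)n$ vertices, which is possible since $m\ge pn>(p-1)n$, and to observe that $H[V(H)\setminus S']$ is still $G$-free, being an induced subgraph of the $G$-free graph $H[V(H)\setminus S]$. Now I would split $E(H)$ into three classes: the edges inside $S'$, the edges between $S'$ and $V(H)\setminus S'$, and the edges inside $V(H)\setminus S'$. These are bounded respectively by $\binom{(p-1)n}{2}$, by $(p-1)n\cdot(m-(p-1)n)$, and by $ex(m-(p-1)n,G)$, the last precisely because $H[V(H)\setminus S']$ is a $G$-free graph on exactly $m-(p-1)n$ vertices. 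Summing these three bounds yields exactly the claimed inequality.

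The one subtlety—and the point I would be most careful about—is the monotonicity issue hidden in the padding step. Rather than counting with the possibly smaller set $S$, which would produce the non-monotone expression $\binom{|S|}{2}+|S|(m-|S|)+ex(m-|S|,G)$ and force a separate comparison of values of this function in $|S|$, I fix the cut size at exactly $(p-1)n$ from the outset. This is legitimate because passing from the complement of $S$ to the complement of the larger set $S'$ only deletes vertices, so $G$-freeness is preserved and no copy of $G$ can be created; the three edge-count bounds therefore hold simultaneously. The hypothesis $m\ge pn$ is used exactly to guarantee that such a set $S'$ with $S\subseteq S'\subseteq V(H)$ and $|S'|=(p-1)n$ exists, and that $m-(p-1)n\ge 0$ so that $ex(m-(p-1)n,G)$ is meaningful.
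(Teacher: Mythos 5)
Your proof is correct and is essentially the canonical argument: the paper states this result only as a citation to Gorgol, and your decomposition --- a maximum collection of pairwise disjoint copies of $G$, padding its vertex set $S$ up to a set $S'$ of exactly $(p-1)n$ vertices, then bounding the edges inside $S'$, across the cut, and in the $G$-free remainder by $\binom{(p-1)n}{2}$, $(p-1)n(m-(p-1)n)$ and $ex(m-(p-1)n,G)$ respectively --- is exactly the proof in the cited source, with the padding step correctly handling the monotonicity subtlety. One pedantic remark: a copy of $G$ in $H-S$ contradicts the maximality of the collection directly (it gives $q+1$ disjoint copies where $q\le p-1$ is the collection's size), and only yields $p$ copies when $q=p-1$; since you invoke maximality first, this is merely a wording slip, not a gap.
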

In 1959, Erd\H{o}s and Gallai \cite{gallai1959maximal} studied the value of $ex(n,P_\ell)$ and gave the following well known result.
\begin{theorem}[\cite{gallai1959maximal}]\label{G}
    For any $n,\ell \in \mathbb{N}$, $ex(n,P_\ell)\leq \frac{\ell-2}{2}n.$
\end{theorem}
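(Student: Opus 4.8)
The plan is to prove the equivalent statement by induction on $n$: every $n$-vertex graph $G$ containing no copy of $P_\ell$ satisfies $e(G)\le \frac{\ell-2}{2}n$. The base case handles $n\le \ell-1$, where $e(G)\le \binom{n}{2}=\frac{n(n-1)}{2}\le \frac{(\ell-2)n}{2}$ because $n-1\le \ell-2$. For a disconnected $G$ every component has fewer than $n$ vertices, so the induction hypothesis applies to each and the per-component bounds sum to the claim; thus the real work is in the connected inductive step with $n\ge \ell$. There I would peel off either a single low-degree vertex or one small component and apply induction to the remainder; the whole difficulty is guaranteeing that such a peelable piece always exists.

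To locate it, I would fix a longest path $P=x_1x_2\cdots x_p$ in $G$; since $G$ is $P_\ell$-free we have $p\le \ell-1$. Because $P$ is longest, every neighbor of $x_1$ and of $x_p$ lies on $P$, since otherwise $P$ could be extended. If $\min\{d(x_1),d(x_p)\}\le \frac{\ell-2}{2}$, I delete the corresponding endpoint $v$, apply induction to the $P_\ell$-free graph $G-v$ on $n-1$ vertices, and restore $d(v)\le \frac{\ell-2}{2}$ edges to obtain $e(G)\le \frac{(\ell-2)(n-1)}{2}+\frac{\ell-2}{2}=\frac{(\ell-2)n}{2}$.

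The remaining case is $d(x_1),d(x_p)>\frac{\ell-2}{2}$, whence $d(x_1)+d(x_p)\ge \ell-1\ge p$. Here I would run the standard Ore-type crossing argument: writing $A=\{i:x_1x_i\in E\}$ and $B=\{i:x_px_{i-1}\in E\}$, both sit inside $\{2,\dots,p\}$, and $|A|+|B|\ge p>p-1$ forces a common index $i$, which closes the vertex set of $P$ into a cycle through all $p$ vertices. If the component $C$ containing $P$ had any further vertex, connectivity would supply an edge from $V(P)$ to the outside, and cutting the cycle at that attachment point would yield a path on $p+1$ vertices, contradicting the maximality of $P$. Hence $C=V(P)$ is a set of at most $\ell-1$ vertices with $e(C)\le \binom{|C|}{2}=\frac{|C|(|C|-1)}{2}\le \frac{(\ell-2)|C|}{2}$; I peel off this entire small component and induct on the rest.

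The main obstacle is precisely this middle case: showing that when both endpoints of a longest path have large degree, the path's vertex set must close into a spanning cycle and, by maximality, must already be a full connected component. Once the crossing lemma and the extend-by-one-vertex contradiction are in place, the degree accounting in the two peeling cases is routine, and combining them with the component-wise sum for disconnected graphs yields the claimed bound $\frac{\ell-2}{2}n$.
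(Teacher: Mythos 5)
Your proof is correct, but note that the paper contains no proof of Theorem \ref{G} to compare against: it is quoted verbatim from Erd\H{o}s and Gallai \cite{gallai1959maximal} as a known tool. Your argument is the classical Erd\H{o}s--Gallai induction: reduce to components, take a longest path $P=x_1\cdots x_p$ (so $p\le \ell-1$ and all neighbours of $x_1,x_p$ lie on $P$), either delete a low-degree endpoint and induct, or use the Ore-type crossing sets $A=\{i: x_1x_i\in E\}$ and $B=\{i: x_px_{i-1}\in E\}$ with $|A|+|B|\ge \ell-1\ge p>p-1$ to close $V(P)$ into a cycle, whence maximality of $P$ forces $V(P)$ to be an entire component of at most $\ell-1$ vertices. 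All the estimates check out: $d(x_1)+d(x_p)>\ell-2$ gives $\ge \ell-1$ by integrality, the common index $i\in A\cap B$ satisfies $2\le i\le p$ so the cycle $x_1\cdots x_{i-1}x_px_{p-1}\cdots x_ix_1$ is well defined, and $\binom{|C|}{2}\le \frac{(\ell-2)|C|}{2}$ since $|C|\le \ell-1$. One cosmetic wrinkle: having reduced to a \emph{connected} $G$ with $n\ge\ell$, your second case actually terminates in a contradiction ($C=V(G)$ would force $n=p\le\ell-1$) rather than in a peeling step; the peel-the-component phrasing is only needed if you run the induction on arbitrary graphs without the connectivity reduction. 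Either organization is sound, so this is a matter of presentation, not a gap.
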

Bushaw and Kettle \cite{bushaw2011turan} determined the Tur\'an number for the disjoint path with the same length when $n$ is large enough.
\begin{theorem}
[\cite{bushaw2011turan}]\label{thm: ex(n,kPl)}
For $k\geq 2$, $\ell \geq 4$ and $n\geq 2\ell+2k\ell(\lc\frac{\ell}{2}\rc+1){\ell\choose \lchuto},$
$$ex(n,k P_\ell)=\left(n-k\lchuto+1\right)\left(k\lchuto-1\right)+{k\lchuto-1\choose 2}+c_\ell,$$
where $c_\ell=1$ if $\ell$ is odd and $c_\ell=0$ if $\ell$ is even.
\end{theorem}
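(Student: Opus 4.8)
The plan is to prove the two matching inequalities separately, writing $s=k\lchuto-1$ throughout.

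\emph{Lower bound.} First I would exhibit the extremal construction. Let $G_0=K_s+I_{n-s}$ when $\ell$ is even, and let $G_0$ be obtained from $K_s+I_{n-s}$ by adding a single edge inside the independent part when $\ell$ is odd. A direct count gives $e(G_0)=\binom{s}{2}+s(n-s)+c_\ell$, which is exactly the claimed value. To see that $G_0$ is $kP_\ell$-free, the key observation is that no two vertices of the independent part are adjacent, so along any path the vertices of $K_s$ must separate the independent ones; counting the blocks of consecutive independent vertices then shows that every copy of $P_\ell$ uses at least $\lchuto$ vertices of $K_s$ (a short case check confirms that the single extra edge in the odd case does not lower this bound). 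Hence $k$ vertex-disjoint copies would need at least $k\lchuto=s+1$ clique vertices, which is impossible, and so $ex(n,kP_\ell)\ge \binom{s}{2}+s(n-s)+c_\ell$.

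\emph{Upper bound.} For the reverse inequality I would argue directly. Let $G$ be any $kP_\ell$-free graph on $n$ vertices and fix a \emph{maximum} family $\{P^1,\dots,P^j\}$ of vertex-disjoint copies of $P_\ell$ in $G$; since $G$ is $kP_\ell$-free we have $j\le k-1$. Write $U=\bigcup_i V(P^i)$, so that $|U|=j\ell$ is bounded independently of $n$, and put $R=V(G)\setminus U$. By maximality $G[R]$ contains no $P_\ell$, so Theorem~\ref{G} gives $e(G[R])\le\frac{\ell-2}{2}|R|\le\frac{\ell-2}{2}n$. Decomposing, $e(G)=e(G[R])+e(U,R)+e(G[U])$, where $e(G[U])\le\binom{j\ell}{2}=O(1)$ is negligible.

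The whole difficulty lies in bounding the number $e(U,R)$ of edges between the packing and the remainder, since the trivial estimate $e(U,R)\le |U|\,|R|$ is off by roughly a factor of two. Here a reconfiguration (rotation) argument is needed: I would show that, apart from a bounded number of exceptional vertices, every $w\in R$ sends at most $\lchuto$ edges to each path $P^i$. Indeed, if $w$ had more than $\lchuto$ neighbours on a single path $u_1\cdots u_\ell$, then pairing up consecutive vertices forces two of those neighbours to be consecutive on the path; if many vertices of $R$ behaved this way, one could combine them with the $P_\ell$-freeness of $G[R]$ to reroute $P^i$, liberate a vertex, and assemble an extra vertex-disjoint copy of $P_\ell$, contradicting maximality. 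Summing the resulting per-path bound over the $j\le k-1$ paths gives $e(U,R)\le (k-1)\lchuto\,|R|+O(1)$, and adding the Erd\H{o}s--Gallai estimate for $e(G[R])$ yields the coefficient $(k-1)\lchuto+\frac{\ell-2}{2}$ of $n$, which equals $s=k\lchuto-1$ when $\ell$ is even; a finer analysis of the path endpoints and of the single extra independent edge then recovers the exact additive term $-\binom{s+1}{2}+c_\ell$ and settles the odd case.

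\emph{Main obstacle.} The hardest step is precisely this reconfiguration bound on $e(U,R)$: one must convert ``many remainder vertices attaching densely to a path'' into an additional vertex-disjoint $P_\ell$, and carry enough bookkeeping to pin down not only the leading coefficient $s$ of $n$ but also the lower-order term. This is exactly where the hypothesis that $n$ is very large (the stated bound $n\ge 2\ell+2k\ell(\lchutoo+1)\binom{\ell}{\lchuto}$) enters: it guarantees that the finitely many exceptional vertices together with the $O(1)$ contributions $e(G[U])$ and $\binom{j\ell}{2}$ are dwarfed by the $\Theta(n)$ main term, so the counting closes and matches the construction of the lower bound.
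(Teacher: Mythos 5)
Your lower bound is fine: the construction $K_{s}+I_{n-s}$ with $s=k\lchuto-1$ (plus one edge in the independent part for odd $\ell$), and the block-counting argument that every $P_\ell$ must use at least $\lchuto$ clique vertices, is exactly the standard extremal example, and the arithmetic $sn-\binom{s+1}{2}+c_\ell$ matches the stated value. Note, for calibration, that this paper does not prove the statement at all --- it is quoted from Bushaw and Kettle \cite{bushaw2011turan} --- and the known proof does not proceed by your path-packing decomposition: it uses the common-neighborhood lemma (Lemma \ref{lem: t vertices with many common neighbors}) to extract $\lchuto$-sets with huge shared neighborhoods, assembles them into the auxiliary hypergraph $\mathscr{H}$, proves $|V(\mathscr{H})|\le k\lchuto-1$ by induction on $k$ (the analogue is Lemma \ref{lem: the size of A} in Section 2), and then counts edges by classifying vertices according to their degree into $V(\mathscr{H})$.

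The genuine gap in your upper bound is twofold, and both problems sit exactly at the step you label the ``main obstacle.'' First, for odd $\ell$ your accounting does not even give the correct coefficient of $n$: with $m=\lchuto$, Erd\H{o}s--Gallai gives $e(G[R])\le\frac{\ell-2}{2}|R|=(m-\tfrac12)|R|$, and your per-path attachment bound gives $e(U,R)\le (k-1)m|R|+O(1)$, so summing yields $(km-\tfrac12)n+O(1)$, which exceeds the truth $(km-1)n+O(1)$ by $\Theta(n)$, not by an ``additive term.'' Closing this requires showing that the two bounds cannot be simultaneously tight --- a vertex of $R$ sitting in a dense ($K_{\ell-1}$-like) component of $G[R]$ cannot also attach with $m$ edges to every path --- and that joint argument is the heart of the theorem, absent from your sketch. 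Second, the per-vertex claim itself (``apart from boundedly many exceptions, each $w\in R$ sends at most $\lchuto$ edges to each $P^i$'') is only gestured at: a single heavy $w$ with a consecutive pair of neighbors on $P^i$ merely yields a $P_{\ell+1}$, which is no contradiction; to contradict maximality of the packing you must manufacture \emph{two} disjoint copies of $P_\ell$ inside $V(P^i)\cup R$, and since $G[R]$ being $P_\ell$-free still permits many disjoint copies of $P_{\ell-1}$ in $R$, the rerouting needs careful control of endpoints and of which $R$-vertices border $U$ --- none of which is supplied. Finally, even in the even case your method only delivers $sn+O(1)$ with an unidentified constant; pinning down the exact term $\binom{s}{2}+c_\ell$ requires identifying the $\le s$ high-degree vertices and the structure around them (this is what the hypergraph approach of \cite{bushaw2011turan}, and of Section 2 here via Lemmas \ref{lem: delete t vertices always have k-1 edges}--\ref{lastlem}, accomplishes), so the proposal as written proves neither the exact value nor, for odd $\ell$, the correct order-$n$ coefficient.
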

The extremal graph here is $K_t+I_{n-t}$ with a single edge added to the empty class when $\ell $ is odd and $t=k\lchuto-1$.

\begin{theorem}
[\cite{bushaw2011turan}]\label{thm: ex(n,kP3)}
For $k\geq2$, $n\geq 7k$,
$$ex(n,kP_3)={k-1\choose 2}+(n-k+1)(k-1)+\left\lfloor\frac{n-k+1}{2}\right\rfloor.$$
\end{theorem}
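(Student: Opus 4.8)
The plan is to prove matching lower and upper bounds; throughout write $f(n,k)=\binom{k-1}{2}+(n-k+1)(k-1)+\lfloor (n-k+1)/2\rfloor$ for the claimed value.

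\textbf{Lower bound.} I would first check that the extremal graph $G^{*}=K_{k-1}+M$, where $M$ is a maximum matching on the remaining $n-k+1$ vertices, is $kP_3$-free and has exactly $f(n,k)$ edges. The edge count is immediate: $\binom{k-1}{2}$ edges inside $K_{k-1}$, $(k-1)(n-k+1)$ edges from $K_{k-1}$ to $M$, and $\lfloor (n-k+1)/2\rfloor$ edges inside $M$. For $kP_3$-freeness, observe that the subgraph induced on $V(G^{*})\setminus V(K_{k-1})$ is just $M$, which has maximum degree at most $1$ and hence no $P_3$; therefore every copy of $P_3$ in $G^{*}$ must use at least one vertex of $K_{k-1}$, and pairwise disjoint copies use distinct such vertices. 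As $|V(K_{k-1})|=k-1$, at most $k-1$ copies can be pairwise disjoint.

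\textbf{Upper bound.} I would bound $ex(n,kP_3)$ by strong induction on $k$. The base case $k=1$ reads $ex(n,P_3)=\lfloor n/2\rfloor=f(n,1)$, which holds because a $P_3$-free graph has maximum degree at most $1$. For the step, let $G$ be $kP_3$-free with $n\ge 7k$, and split on $\Delta(G)$. Suppose first that $G$ has a vertex $v$ with $d(v)\ge 3k-1$. Then $G-v$ is $(k-1)P_3$-free: if it contained $k-1$ disjoint copies of $P_3$, spanning $3(k-1)$ vertices, then since $d(v)\ge 3k-1>3(k-1)$ the vertex $v$ would have two neighbours outside these vertices and thus extend the family to $k$ disjoint copies in $G$, a contradiction. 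Because $n-1\ge 7(k-1)$, the induction hypothesis gives $e(G-v)\le f(n-1,k-1)$. One checks directly the identity $f(n,k)=f(n-1,k-1)+(n-1)$ (the two floor terms agree, as $(n-1)-(k-1)+1=n-k+1$), so with $d(v)\le n-1$ we obtain $e(G)=e(G-v)+d(v)\le f(n,k)$.

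\textbf{The main obstacle.} The residual case $\Delta(G)\le 3k-2$ is the technical heart and the step I expect to be hardest, since here $k$ cannot be reduced. I would take a maximum family $\mathcal{Q}$ of pairwise disjoint copies of $P_3$, necessarily of size $t\le k-1$, and set $W=V(\mathcal{Q})$ and $U=V(G)\setminus W$. By maximality $G[U]$ has no $P_3$, so it is a matching and $e(G[U])\le \lfloor (n-3t)/2\rfloor$. Everything then reduces to bounding $e(G[W])+e(W,U)$, and the naive estimate $\sum_{w\in W}d(w)\le 3(k-1)(3k-2)$ is just barely too weak at $n=7k$. The missing ingredient is a rerouting argument forbidding too many edges from a single $Q_i=x\,y\,z$ into $U$: for instance, if an endpoint $x$ has a neighbour $p\in U$ that is isolated in $G[U]$ while the opposite endpoint $z$ has two neighbours $q,r\in U\setminus\{p\}$, then $p\,x\,y$ together with $q\,z\,r$ replaces $Q_i$ by two disjoint copies and contradicts maximality. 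Turning such local exchanges into a sharp bound on $e(G[W])+e(W,U)$ — by a case analysis over which vertex of each $Q_i$ is hit and over whether the relevant $U$-vertices are matched or isolated, with the hypothesis $n\ge 7k$ supplying enough spare vertices of $U$ to carry out every exchange — is the crux, and combining it with the bound on $e(G[U])$ would yield $e(G)\le f(n,k)$, completing the induction.
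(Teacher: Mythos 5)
First, a point of reference: the paper you were given never proves this statement --- Theorem~\ref{thm: ex(n,kP3)} is imported verbatim from Bushaw--Kettle \cite{bushaw2011turan} and used as a black box --- so your attempt can only be judged on its own merits and against the cited source. On those terms, the parts you actually carry out are correct. The lower-bound construction $K_{k-1}+M^{\lfloor (n-k+1)/2\rfloor}_{n-k+1}$ and its edge count are right, as is the $kP_3$-freeness argument. The high-degree step is also sound: if $d(v)\geq 3k-1$ then $v$ has at least two neighbours outside any $3(k-1)$ vertices, so $G-v$ is indeed $(k-1)P_3$-free, the identity $f(n,k)=f(n-1,k-1)+(n-1)$ checks out, and $n-1\geq 7(k-1)$ keeps the induction hypothesis applicable.

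The genuine gap is the one you name yourself: the case $\Delta(G)\leq 3k-2$ is a plan, not a proof, and it is precisely where the theorem lives. Concretely, with a maximum packing of $t\leq k-1$ disjoint copies of $P_3$ covering $W$, your available estimates give
\begin{equation*}
e(G)\;\leq\; \left\lfloor \frac{n-3t}{2}\right\rfloor \;+\; 3t(3k-2)\;\leq\; \frac{n}{2}+3(k-1)(3k-2),
\end{equation*}
while $f(n,k)-\frac{n}{2}$ is roughly $(k-1)(n-k+1)$ up to lower-order terms; comparing the two shows the crude bound closes only when $n-k+1\gtrsim 9k$, i.e.\ $n\gtrsim 10k$. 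So your outline does assemble into a complete proof for, say, $n\geq 10k$, but in the regime $7k\leq n<10k$ claimed by the theorem everything rests on the unexecuted exchange analysis. Exhibiting one rerouting (endpoint $x$ grabs an isolated $p\in U$ while $z$ grabs $q,r$) is the right flavour --- it is essentially how the bounded-degree situation is tamed in \cite{bushaw2011turan} --- but a single forbidden configuration does not yield a sharp bound on $e(G[W])+e(W,U)$: one must control, for every pattern of attachments of each packed $P_3$ into matched versus unmatched $U$-vertices, the exact number of $W$-incident edges, and the count has to come out tight enough to recover the floor term $\lfloor (n-k+1)/2\rfloor$ rather than merely $O(n)$. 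Until that case analysis is carried out, the proposal does not establish the statement for $n\geq 7k$.
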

The extremal graph here is $K_{k-1}+M^{\lfloor\frac{n-k+1}{2}\rfloor}_{n-k+1}$.

 Lidick\'y, Liu  and  Palmer \cite{lidicky2012turan} determined the Tur\'an number for a linear forest, that is, a forest whose components are all paths.
\begin{theorem}[\cite{lidicky2012turan}]\label{thm: old linear forest}
    Let $F_k=\cup_{i=1}^kP_{\ell_i}$ and we set $\ell=\sum_{i=1}^k \lf\frac{\ell_i}{2}\rf$. If at least one $\ell_i$ is not 3, then for sufficiently large $n$,
    $$ex(n,F_k)=(\ell-1)(n-\ell+1)+{\ell-1\choose 2}+c,$$
    where $c=1$ if all $\ell_i$ are odd and $c=0$ otherwise.
\end{theorem}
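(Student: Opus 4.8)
The plan is to prove matching lower and upper bounds. Throughout I write $r_i=\lfloor \ell_i/2\rfloor$, so that $\ell=\sum_{i=1}^k r_i$, and I let $s=\sum_{i=1}^k\ell_i$ be the total number of vertices of $F_k$.

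\textbf{Lower bound (the construction).} I would take $G_0=K_{\ell-1}+I_{n-\ell+1}$ and, when every $\ell_i$ is odd, add a single edge $e_0=xy$ inside the independent part; a direct count gives $e(G_0)=(\ell-1)(n-\ell+1)+\binom{\ell-1}{2}+c$, so it suffices to check that $G_0$ is $F_k$-free. The decisive observation is that the clique vertices used by any copy of a path $P_{\ell_i}$ must form a vertex cover of that path minus at most one edge, since outside the clique the only available edges are the join edges and possibly the single edge $e_0$. The minimum vertex cover of $P_{\ell_i}$ equals $r_i$, and an elementary computation shows that deleting one edge lowers this number only when $\ell_i$ is even. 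Hence in $G_0$ every path needs at least $r_i$ clique vertices (the lone edge $e_0$ occurs only in the all-odd case, where it saves nothing, and it lies in at most one path), so a copy of $F_k$ would require $\sum_i r_i=\ell$ clique vertices while only $\ell-1$ are present — a contradiction.

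\textbf{Upper bound: an absorbing set.} Let $G$ be $F_k$-free on $n$ (large) vertices, put $A=\{v:d(v)\ge n-s-k\}$ (the near-universal vertices) and $B=V(G)\setminus A$. First I would show $|A|\le\ell-1$. If $|A|\ge\ell$, pick $\ell$ such vertices, partition them into groups of sizes $r_1,\dots,r_k$, and build each $P_{\ell_i}$ as an alternating centre/connector path with these vertices as centres, drawing every connector greedily from the common neighbourhood of two consecutive centres. As each centre misses at most $s+k$ vertices, each needed common neighbourhood has size $n-O(1)$, so all $s-\ell$ connectors can be chosen distinct and disjoint from the centres; this embeds $F_k$, a contradiction. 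Consequently, writing $a=|A|$,
$$e(G)\le \binom{a}{2}+a(n-a)+e(G[B])=:f(a)+e(G[B]),$$
and since $f(a+1)-f(a)=n-a-1>0$ for $a<n-1$, one has $f(a)\le f(\ell-1)=(\ell-1)(n-\ell+1)+\binom{\ell-1}{2}$.

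\textbf{Bounding $e(G[B])$ (the source of $c$).} It then remains to show $e(G[B])\le c$ when $a=\ell-1$; the case $a\le\ell-2$ leaves slack of order $n$ and I would close it by applying the same absorbing idea recursively (equivalently, inducting on $\ell$), since $G[B]$ must then avoid a linear forest of smaller parameter. The mechanism is a vertex-cover/absorbing lemma: an edge of $G[B]$ placed at a suitable position of a path $P_{\ell_i}$ lets the $A$-centres cover every remaining edge, so $P_{\ell_i}$ can be realised with only $r_i-1$ centres exactly when the cover of $P_{\ell_i}$ drops by one. By the computation from the lower bound, one $B$-edge achieves this for an even $\ell_i$, whereas for an odd $\ell_i$ one needs two $B$-edges forming either a matching $M_2$ or a path $P_3$ inside $B$. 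Thus if some $\ell_i$ is even (so $c=0$) a single edge of $G[B]$ already yields $F_k$ (use $r_i-1$ centres on that path and $r_j$ on the others, total $\ell-1=a$), forcing $e(G[B])=0$; and if all $\ell_i$ are odd (so $c=1$) any $M_2$ or $P_3$ in $G[B]$ likewise yields $F_k$, so $G[B]$ is both $2K_2$-free and $P_3$-free, hence has maximum degree $\le 1$ and at most one edge. In either case $e(G[B])\le c$, and with the displayed inequality this gives $e(G)\le f(\ell-1)+c$, as required.

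\textbf{The main obstacle.} I expect the hard part to be the additive constant, i.e.\ passing from the soft bound ``$|A|\le\ell-1$'' to the exact count. Two points need care. First, the recursive treatment of $a\le\ell-2$: one must verify that a deficiency of $\ell-a$ centres can only be paid by a correspondingly rigid (hence sparse) $G[B]$, and that the lower-order terms match exactly under the recursion rather than merely at leading order. Second, the precise bookkeeping of the absorbing lemma — confirming that saving a centre in an odd path genuinely requires two vertex-disjoint $B$-edges, that these can always be inserted into one path, and that all remaining connectors are simultaneously drawn disjointly from the $n-O(1)$-sized common neighbourhoods. These are exactly the places where the parity dichotomy (all $\ell_i$ odd versus not) enters and produces the term $c$.
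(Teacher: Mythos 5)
The paper does not actually prove this statement: it is quoted verbatim from Lidick\'y, Liu and Palmer \cite{lidicky2012turan}, so your proposal can only be measured against the analogous machinery this paper builds for its own theorems (Sections 2 and 4). Your lower bound is correct and is the standard argument: the clique vertices of any embedded path must cover all its edges except those inside the independent part, the minimum vertex cover of $P_m$ is $\lfloor m/2\rfloor$, and deleting one edge lowers it only for even $m$; this is fine. The upper bound, however, has a genuine gap at its decisive step, the claim that $a=\ell-1$ forces $e(G[B])\le c$. Your absorbing lemma tacitly assumes that the endpoints of an edge of $G[B]$ can be attached to centres in $A$, but vertices of $B$ need not be adjacent to \emph{any} vertex of $A$. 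Concretely, take all $\ell_i$ even with $\ell_i\ge 4$ (so $c=0$) and let $G=\bigl(K_{\ell-1}+I_{n-\ell-1}\bigr)\cup K_2$: this graph is $F_k$-free (no $P_{\ge 4}$ fits in the $K_2$ component, and each path in the other component needs $\lfloor \ell_i/2\rfloor$ clique vertices), it has exactly $a=\ell-1$ near-universal vertices, and $e(G[B])=1>c$; a disjoint triangle gives $e(G[B])=3$. So your statement ``a single edge of $G[B]$ already yields $F_k$'' is false, and the inequality $e(G)\le f(a)+e(G[B])$ is too lossy to finish from. The correct repair is to count $e(G)=e(G[A])+\sum_{v\in B}d_A(v)+e(G[B])$ and show that every $B$-edge with an endpoint not fully attached to $A$ is paid for by a deficiency in $d_A(v)$; this is exactly the degree-class decomposition and compensation count that the present paper carries out (the sets $B,C,D$ and Lemmas \ref{lemma: edges inside BCD}, \ref{lastlem} in Section 2, and the sets $D_0,\dots,D_3$ with Lemmas \ref{lem: even, no D E}--\ref{lem: number of edge in D} in Section 4). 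Your absorbing argument is valid only after one has shown the relevant $B$-edges lie in the class $D_0$ of fully attached vertices.

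Two further points. First, your parity analysis ignores $P_3$ components: for $\ell_i=3$ one has $r_i=1$, and neither one $B$-edge nor an $M_2$ in $B$ saves a centre (a $P_3$ needs either one centre or all three vertices inside $B$) --- this is precisely where the hypothesis ``at least one $\ell_i\ne 3$'' enters (it guarantees, in the all-odd case, an odd path of order at least $5$ into which the two $B$-edges can be routed) and why $ex(n,kP_3)$ obeys a different formula with a matching term $\lfloor\frac{n-k+1}{2}\rfloor$; you never invoke the hypothesis. Second, your treatment of $a\le\ell-2$ is only a gesture: $G[B]$ being $F_k$-free gives merely $e(G[B])\le(\ell-1)n+O(1)$, which exceeds the available slack $(\ell-1-a)n$, so the recursion needs the unproven (and nontrivial) statement that $G[B]$ avoids the whole family of subforests completable to $F_k$ by $a$ near-universal vertices. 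As it stands the proposal establishes the extremal construction and $|A|\le\ell-1$, but not the exact upper bound.
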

For the linear forest that every component is $P_3$, Yuan and Zhang \cite{yuan2017turan} determined $ex(n,k P_3)$ for all positive integers $n$ and $k$. Let $F=\cup_{i=1}^kP_{\ell_i}$.
Yuan and Zhang \cite{yuan2021turan} improved the above results by determining  $ex(n,F_k)$ when $n\geq \sum_{i=1}^k\ell_i$ for all $\ell_1\geq \ell_2\geq\dots\geq \ell_k\geq 2$ and $k\geq 2$.
They also proposed a conjecture for the value of $ex(n,F_k)$ for all integer $n$ and $\ell_1\geq \ell_2\geq\dots\geq \ell_k\geq 2$, $\ell_1>3$. The conjecture is still basically open, but it was proven to be correct for some special linear forests. For example, $P_3\cup P_{2\ell+1}$ \cite{yuan2021turan}, $2P_5$ \cite{bielak2016turan,yuan2021turan}, $P_9\cup P_7$ \cite{fang2024turan}, $2P_7$ \cite{lan2019turan}, $2P_9$ \cite{zhang2022turan}, $3P_5$ \cite{feng2020turan}, $3P_7$ and $2P_3\cup P_{2\ell+1}$ \cite{deng2023turan} et al..
The Tur\'an number of star forest, a forest whose components are all stars, has been studied in \cite{lidicky2012turan,lan2019turan,Yin2016241}.

Recently, Alon and Frankl \cite{alon2024turan} proposed to consider $ex({n,\{F,M_{s+1}\}})$. In the same paper, they determined the value of $ex(n,\{K_{k+1},M_{s+1}\}).$
Later, Gerbner \cite{gerbner2024turan} gave several results about $ex({n,\{F,M_{s+1}\}})$,  when $F$ satisfies some properties. He proved that if $\chi(F)\geq 3$ and $n$ is large enough, then $ex(n,\{F,M_{s+1}\})=ex(s,\mathscr{F})+s(n-2)$, where $\mathscr{F}$ is the family of graphs obtained by deleting an independent set from $F$ and $\chi(F)$ is the chromatic number of $G$. Lu, Luo, and Zhao \cite{luo2024turannumbercompletebipartite} determined the exact value of $ex(n,\{K_{l,t},M_{s+1}\})$ when $s$ is large enough and $n\geq {3s\choose 2}$ for all $3\leq l\leq t$.

Since $M_{s+1}$ is a 1-regular graph with size  $s+1$, we can extend to study the Tur\'an number of any 2-regular graphs, i.e. cycles with length at least $k$ (denoted by $C_{\geq k}$) and a fixed graph $F$. For example, Dou, Ning and Peng \cite{dou2024number} determined the value of $ex(n,\{K_m, C_{\geq k}\}).$ If we consider $M_{s+1}$ as $s+1$ disjoint stars with maximum degree 1, we can extend to consider the Tur\'an number of $t+1$ disjoint star with maximum degree $\ell$ and a fixed graph, for example, Lu, Liu and Kang \cite{lu2024extremal} determined the value of $ex(n,\{K_m,(t+1) S_\ell\}\}).$

In this paper, we first consider the Tur\'an number of $kP_\ell$ and a fixed graph $F$ with $\chi(F)\geq 3$ and $\beta_1(F)\geq 1$, where $\chi(F)$ is the chromatic number of $F$ and $\beta_1(F)$ is the edge control number of $F$. Notice that  $\beta_1(F)\geq 1$ if $E(F)\not=\emptyset$. And $\beta_1(F)\geq 2$ implies the graph $K_2+I_{v(F)}$ is $F$-free.
Let
$$\mathscr{G}_1(F)=\{F[V(F)\backslash S]\mid S\subseteq V(F), e(F[S])=0\},$$and
$$\mathscr{G}_2(F)=\{F[V(F)\backslash S]\mid S\subseteq V(F),e(F[S])\leq 1\}.$$
Obviously, $\mathscr{G}_1(F)\subseteq \mathscr{G}_2(F)$. So $ex(k\lchuto-1,\mathscr{G}_2(F))\leq ex(k\lchuto-1,\mathscr{G}_1(F))$.
We obtain the following main results. We deal with the case when $\beta_1(F)\geq 2$ first.
\begin{theorem}\label{thm: ex(n,kpl,F)} Let $F$ be a graph of order $r$ with $\chi(F)\geq 3$ and $\beta_1(F)\geq 2$.
    When $k\geq 2$, $l\geq 4$, $r\geq 3$ and $n\geq 4k^2\ell^2(k\ell+r)(\lchuto+1){\ell\choose \lchuto} $, we have
    $$ex(n,\{F,k P_\ell\})=\left(n-k\lchuto+1\right)\left(k\lchuto-1\right)+b_\ell,$$ where  $b_\ell=\max\{1+ex(k\lchuto-1,\mathscr{G}_2(F)),ex(k\lchuto-1,\mathscr{G}_1(F))\} $ when $\ell$ is odd and $b_\ell= ex(k\lchuto-1,\mathscr{G}_1(F))$ when $\ell$ is even.
\end{theorem}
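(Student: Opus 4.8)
Write $t=k\lchuto-1$. The plan is to exhibit two $\{F,kP_\ell\}$-free graphs and take the denser one. Let $G_1^\ast$ be an extremal $\mathscr{G}_1(F)$-free graph on $t$ vertices (so $e(G_1^\ast)=ex(t,\mathscr{G}_1(F))$) and set $G_1=G_1^\ast+I_{n-t}$; when $\ell$ is odd, also take $G_2^\ast$ extremal $\mathscr{G}_2(F)$-free on $t$ vertices and set $G_2=G_2^\ast+I'$, where $I'$ is $I_{n-t}$ together with one extra edge. To see each is $kP_\ell$-free I would use that in any graph of the form (graph on $t$ vertices)$+$(independent side) a copy of $P_\ell$ must meet the $t$-set in at least $\lchuto$ vertices, since the vertices landing on the independent side form an independent set of the path and so number at most $\lchutoo$; a single extra edge on the independent side cannot lower this count when $\ell$ is odd, whereas it would when $\ell$ is even (which is exactly why $G_2$ is excluded there). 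As $t<k\lchuto$, neither graph contains $kP_\ell$. For $F$-freeness I would invoke the definition of $\mathscr{G}_i(F)$: any copy of $F$ splits into the part $S$ landing on the independent side, which induces at most $i-1$ edges, and the part $F[V(F)\setminus S]\in\mathscr{G}_i(F)$ landing in the core, contradicting that the core is $\mathscr{G}_i(F)$-free. Counting edges gives $e(G_1)=t(n-t)+ex(t,\mathscr{G}_1(F))$ and $e(G_2)=t(n-t)+1+ex(t,\mathscr{G}_2(F))$, so the better of the two realises $t(n-t)+b_\ell$.

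\textbf{Locating the core (the heart).} Now let $G$ be extremal, so $e(G)\ge t(n-t)+b_\ell$, and set $R=V(G)\setminus A$, where $A$ is the set of vertices of very large degree (threshold a suitable $1-O(1/r)$ fraction of $n$, chosen so that the neighbourhoods of any $r$ of them still intersect). The goal of this phase is $|A|=t$ together with near-domination of $R$ by $A$. The bound $|A|\le t$ I would get by contradiction, mirroring the construction behind Theorem~\ref{thm: ex(n,kPl)}: $t+1=k\lchuto$ vertices of near-full degree serve as the internal connectors of $k$ vertex-disjoint copies of $P_\ell$ (each copy using $\lchuto$ of them, joined through distinct common neighbours, which are abundant because $n$ is enormous), producing $kP_\ell$. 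The reverse bound and, crucially, the control of every remaining vertex is the delicate part: one must rule out vertices of intermediate degree and show $G[R]$ is almost edgeless. Here the large-$n$ hypothesis is indispensable, and I expect this to require an iterative extraction of high-degree vertices rather than a single threshold, since a near-dominating vertex can masquerade as the centre of a large star inside $G[R]$; the iteration is bounded by the edge count $e(G)=tn+O(t^2)$. \emph{This structural step is the main obstacle.}

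\textbf{Exploiting $F$-freeness and finishing.} Once $A$ with $|A|=t$ and $R$ nearly independent is secured, I would establish two facts. First, $e(G[R])\le c_\ell$: if $R$ carried an edge (even $\ell$), or two edges usable in one path (odd $\ell$), then that $R$-structure together with only $\lchuto-1$ core vertices builds one copy of $P_\ell$, freeing enough of the $t$ core vertices to complete $k$ disjoint copies — a contradiction whose arithmetic is precisely the identity $t=k\lchuto-1$. Second, $G[A]$ is $\mathscr{G}_1(F)$-free when $R$ is independent and $\mathscr{G}_2(F)$-free when $R$ carries one edge: a forbidden $F[V(F)\setminus S]\in\mathscr{G}_i(F)$ inside $A$ extends to a copy of $F$ by placing the low-edge set $S$ into $R$ (its at most one adjacent pair on the $R$-edge, the rest on distinct vertices), each new vertex joined to its at most $r$ prescribed neighbours in $A$ via near-domination. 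Hence $e(G[A])\le ex(t,\mathscr{G}_i(F))$ with the matching index $i$. Combining, $e(G)=e(G[A])+e(A,R)+e(G[R])\le ex(t,\mathscr{G}_i(F))+t(n-t)+c_\ell\le t(n-t)+b_\ell$, where the two admissible pairs $(\mathscr{G}_1,0)$ and (for odd $\ell$) $(\mathscr{G}_2,1)$ reproduce the maximum defining $b_\ell$; cores with $|A|<t$ yield strictly fewer edges once the structure is controlled, so the value is attained exactly at $|A|=t$.
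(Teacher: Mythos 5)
Your lower bound is correct and coincides with the paper's constructions $H_1(n,k,\ell,F)$ and $H_2(n,k,\ell,F)$, including the parity analysis of why one extra edge in the independent part is harmless exactly when $\ell$ is odd; and your endgame (decompose $e(G)=e(G[A])+e(A,R)+e(G[R])$, force $G[A]$ to be $\mathscr{G}_1(F)$-free or $\mathscr{G}_2(F)$-free according to whether $R$ carries an edge, using completely attached outside vertices to lift a forbidden $F[V(F)\setminus S]$ back to a copy of $F$) is exactly how the paper concludes. But the step you yourself flag as ``the main obstacle'' is a genuine gap, and the route you sketch toward it would not get off the ground. You define the core $A$ as the set of vertices of degree at least $(1-O(1/r))n$; nothing in the hypotheses guarantees that an extremal $\{F,kP_\ell\}$-free graph has even one vertex of such degree. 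Near-domination of the remainder by $A$ is, in the paper, a \emph{conclusion} extracted only at the very end by an exchange argument on extremal graphs (Lemma \ref{lastlem}(3),(4): delete $C\cup D$, replace by new vertices joined completely to $A$, and invoke maximality together with the reservoir $|B'|\geq k\ell+r$), not a property one may assume while locating the core. What the hypotheses actually yield, via the Bushaw--Kettle common-neighbourhood lemma (Lemma \ref{lem: t vertices with many common neighbors}) applied to each copy of $P_\ell$, is only that every $P_\ell$ contains a $\lchuto$-set $S$ with $|N(S)|\geq n_k$, where $n_k$ is a constant fraction of $n$ depending on $\ell$ --- far from $(1-o(1))n$.

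Accordingly, the paper takes $A$ to be the vertex set of the auxiliary $\lchuto$-uniform hypergraph $\mathscr{H}$ of all such sets, proves $\nu(\mathscr{H})=k-1$ together with the robustness statement Lemma \ref{lem: delete t vertices always have k-1 edges} (deleting any $t<\lchuto$ vertices leaves $\nu\geq k-1$), which is what forces $|A|\geq k\lchuto-1$; the hard inequality $|A|\leq k\lchuto-1$ is Lemma \ref{lem: the size of A}, proved by induction on $k$ through a delicate analysis (Claims 1--5) of how hyperedges can meet a maximum matching $S_0,\ldots,S_{k-1}$ of $\mathscr{H}$. Your one-line substitute --- that $t+1$ vertices of near-full degree would yield $kP_\ell$ --- only caps the number of near-full-degree vertices, which is not the quantity needing a bound, and ``iterative extraction bounded by the edge count'' is not an argument that the remainder is nearly edgeless: that is Lemma \ref{lemma: edges inside BCD} in the paper ($e(G[B])\leq 1$ for odd $\ell$, $=0$ for even, plus degree-one control from $D$ into $B\cup C$), proved by explicitly threading paths through hyperedges of $\mathscr{H}$ and re-invoking the robustness lemma; this is also where the parity dichotomy in $b_\ell$ genuinely enters the upper bound. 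Likewise your closing remark that cores with $|A|<t$ give fewer edges is not how $|A|=t$ is pinned down: it follows from Lemmas \ref{lem: delete t vertices always have k-1 edges} and \ref{lem: the size of A} (recorded as Lemma \ref{lastlem}(1)). So the proposal is sound at both ends but is missing the entire hypergraph-matching mechanism that constitutes the paper's actual proof of the upper bound.
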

Now we consider the case $\beta_1(F)=1$.
Let $B_t=K_2+I_t$, a graph constructed by $t$ triangles sharing one edge. Then $v(B_t)=t+2$ and $e(B_t)=2t+1$.

Let $\beta_1(F)=1$ and $\chi(F)\geq 3$.  Let $N(F,K_3)$ denote the number of copies of $K_3$ in $F$. Then $N(F,K_3)\geq 1$. It is easy to check $F$ is a subgraph of $B_t$ for some $t\geq v(F)-2$. Set $f(F):=\min\{d_F(u_1),d_F(u_2)\}$, where $\{u_1u_2\}$ is the edge control set of $F$. We say a triplet $(F,k,\ell)$ has property $\mathscr{P}$ if one of the following conditions holds:

\noindent(1) $\ell$ is even;

\noindent(2) $v(F)=r\leq k\lchuto+1$;

\noindent(3) $N(F,K_3)=1$ and $f(F)\leq k\lchuto$.

\noindent Assume $F$ has isolate vertices. Then we denote the graph by deleting all isolate vertices from $F$ by  $F'$. If $F'$ is a subgraph  of $ G$, then $F$ is a subgraph  of $ G$ when $n\geq v(F)$. Thus, for convenience, we only consider the case that $F$ has no isolate vertices when $\beta_1(F)=1$. We have the following result.
\begin{theorem}\label{thm: when beta F=1}
    Let $F$ be a graph of order $r$ with $\chi(F)\geq 3$, $\beta_1(F)=1$ and without isolate vertices.
    When $k\geq 2$, $l\geq 4$, $r\geq 3$ and $n\geq 4k^2\ell^2(k\ell+r)(\lchuto+1){\ell\choose \lchuto} $, we have
    $$ex(n,\{F,k P_\ell\})=\left(n-k\lchuto+1\right)\left(k\lchuto-1\right)+C_{F,k,\ell},$$where $C_{F,k,\ell}=0$ if $(F,k,\ell)$ has property $\mathscr{P}$ and $C_{F,k,\ell}=1$ otherwise.
\end{theorem}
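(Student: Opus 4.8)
The plan is to prove matching lower and upper bounds. Throughout I write $p:=\lchuto$ and $t:=k\lchuto-1=kp-1$, so the claimed value is $t(n-t)+C_{F,k,\ell}$. I will freely use that, since $\beta_1(F)=1$ and $\chi(F)\ge 3$, the control edge $u_1u_2$ lies in a triangle, every edge of $F$ meets $\{u_1,u_2\}$, the set $V(F)\setminus\{u_1,u_2\}$ is independent, and $F\subseteq B_m$ iff $m\ge r-2$.

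\textbf{Lower bound.} For the base construction I take $G_0=K_{t,n-t}$ with parts $A$ ($|A|=t$) and $B$ ($|B|=n-t$). It is bipartite, so $\chi(G_0)\le 2<\chi(F)$ and $G_0$ is $F$-free; and since in a bipartite graph every $P_\ell$ uses at least $p$ vertices of the smaller side, any $kP_\ell$ would need $kp>t$ vertices of $A$, so $G_0$ is $kP_\ell$-free. This gives the bound when $C_{F,k,\ell}=0$. When $(F,k,\ell)$ fails $\mathscr P$ — i.e. $\ell$ is odd, $r>k\lchuto+1$ (so $r-2>t$), and $N(F,K_3)\ge 2$ or $f(F)>k\lchuto$ — I add one edge $b_1b_2$ inside $B$ and check the result stays $\{F,kP_\ell\}$-free. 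For $kP_\ell$: a $P_\ell$ with $\ell=2p+1$ using the single $B$–$B$ edge still needs $p$ vertices of $A$ (a short parity count shows one non-bipartite edge cannot push the $A$-usage of an odd path below $p$), so $kP_\ell$ still needs $kp>t$ vertices of $A$. For $F$: every triangle of the new graph is $\{b_1,b_2,a\}$ with $a\in A$, so in any embedding the image of $u_1u_2$ is an edge of such a triangle; the spine $\{b_1,b_2\}$ forces all $r-2$ remaining (independent) vertices into $A$, impossible since $r-2>t$, while a mixed spine $\{a,b_i\}$ is possible only if $N(F,K_3)=1$ and $f(F)\le k\lchuto$. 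As $\mathscr P$ fails, neither occurs, giving the bound $t(n-t)+1$.

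\textbf{Upper bound: setup.} Let $G$ be extremal. I fix a degree threshold $\theta$ (polynomial in $k,\ell,r$, read off from the hypothesis on $n$) and set $A=\{v:d_G(v)\ge\theta\}$, $B=V(G)\setminus A$. The first step is the path-embedding lemma already underlying Theorem \ref{thm: ex(n,kpl,F)} (greedy Bushaw–Kettle-type embedding, where the size hypothesis on $n$ is spent): if $|A|\ge k\lchuto$ then $G\supseteq kP_\ell$, a contradiction, so $|A|\le t$. Writing $e(G)=e(A,B)+e(G[A])+e(G[B])$ and using $e(A,B)\le|A|(n-|A|)\le t(n-t)$ (as $|A|\le t\le n/2$), the problem reduces to controlling the internal edges against the join deficiency $\mathrm{def}:=t\,|B|-e(A,B)=\sum_{b\in B}(t-d_A(b))\ge 0$: it suffices to show $e(G[A])+e(G[B])\le C_{F,k,\ell}+\mathrm{def}$. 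When $|A|<t$ the estimate $e(A,B)\le|A|(n-|A|)$ already loses order $n$ against $t(n-t)$, absorbing the $O(1)$ internal edges, so the essential case is $|A|=t$ with small deficiency.

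\textbf{Upper bound: heart, and the main obstacle.} Two opposing constraints pin the internal edges. From $F$-freeness: since $F\subseteq B_{r-2}$, any edge $xy$ whose endpoints have $\ge r-2$ common neighbours yields $B_{r-2}\supseteq F$; in the small-deficiency regime two vertices of $A$ share all but $O(\mathrm{def})$ of $B$, forcing $e(G[A])=0$ and restricting which $B$–$B$ edges can survive. From $kP_\ell$-freeness: an internal edge usable as a connector lets the $k$ paths economize on $A$-vertices, and a parity-sensitive count (the analogue of the one in the lower bound) shows that with $A$ fixed of size $t=kp-1$ one realizes $kP_\ell$ as soon as the total economy reaches one $A$-vertex — at one surviving internal edge when $\ell$ is even, at two when $\ell$ is odd. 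Hence $e(G[B])=0$ for even $\ell$ and $e(G[B])\le 1$ for odd $\ell$; a single surviving $B$–$B$ edge is furthermore excluded exactly under condition (2) ($r-2\le t$, via the spine $\{b_1,b_2\}$) or condition (3) ($N(F,K_3)=1$ and $f(F)\le k\lchuto$, via a mixed spine $\{a,b_i\}$), which is precisely $\mathscr P$. I expect the delicate point to be turning the economy count into an actual embedding: ``the paths need at most $t$ vertices of $A$'' must be upgraded to a genuine $kP_\ell$, which requires the endpoints of the internal edges and all interpolating connectors to have suitable neighbours — this is where large $n$ and the threshold $\theta$ are consumed — and it must be reconciled with $\mathrm{def}$, since an internal edge whose endpoints are poorly joined to $A$ cannot be used as a connector but is instead paid for by its deficiency contribution. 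Making the book argument robust for low-degree $B$-vertices (rather than in the clean complete-join picture) and keeping the three regimes of $\mathscr P$ simultaneously consistent is the main bookkeeping burden.
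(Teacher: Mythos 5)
Your lower bound is correct and is essentially the paper's construction: $K_{k\lchuto-1,\,n-k\lchuto+1}$, plus one edge in the large part when $\mathscr{P}$ fails, with the same spine analysis of the control edge (unique spine $\{b_1,b_2\}$ killed by $r>k\lchuto+1$, mixed spine killed by $N(F,K_3)\ge2$ or $f(F)>k\lchuto$). Likewise your endgame — forcing $e(G[A])=0$ through the book graph $B_{r-2}$ and excluding a single $B$--$B$ edge exactly under conditions (2) and (3) of $\mathscr{P}$ — matches the paper's proof of Theorem \ref{thm: when beta F=1}. The genuine gap is in the structural phase of the upper bound, and you name it yourself without closing it. Your set $A$ is defined by a constant degree threshold $\theta$, and you assert that $|A|\ge k\lchuto$ would force $kP_\ell\subseteq G$ by ``the path-embedding lemma already underlying Theorem \ref{thm: ex(n,kpl,F)}.'' There is no such quotable lemma about degree thresholds: many vertices of large degree do not by themselves thread long paths (disjoint stars have arbitrarily many high-degree vertices and contain no $P_4$). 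What the paper actually proves (Lemma \ref{lem: the size of A}) bounds a differently defined $A$, namely the vertex set of the auxiliary hypergraph $\mathscr{H}$ whose edges are $\lchuto$-sets with \emph{common} neighborhood of size at least $n_k$; that lemma, proved by induction on $k$ through five claims about maximum matchings of $\mathscr{H}$, is the technical heart of the entire section, and your sketch contains no substitute for it.

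Second, your reduction to the case $|A|=t$ with small deficiency is unjustified as stated: when $|A|=t-j$ you claim the loss of order $jn$ in $e(A,B)$ ``absorbs the $O(1)$ internal edges,'' but before any structure is established $e(G[B])$ is not $O(1)$ — with $B$ the low-degree vertices it can be as large as $\theta|B|/2=\Theta(n)$ with a constant $\theta>t$, so the slack does not absorb it. Controlling those internal edges is exactly where the paper spends Lemmas \ref{lemma: edges inside BCD} and \ref{lastlem}: it partitions $V(G)\setminus A$ into $B,C,D$ according to $d_A$, applies Erd\H{o}s--Gallai to the $P_\ell$-free set $D$, shows $d_{G[B\cup C]}(u)\le 1$ and $e(G[B])\le 1$ by explicit path surgeries through $\mathscr{H}$ (using Lemma \ref{lem: delete t vertices always have k-1 edges}), and then, choosing the extremal $G$ to minimize $e(G[V(G)\setminus A])$, upgrades to the clean picture $C=D=\emptyset$, $|A|=k\lchuto-1$, every vertex of $B$ joined to all of $A$, and $|B'|\ge k\ell+r$. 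Your endgame silently relies on precisely this clean picture: you need common neighborhoods of size at least $k\ell+r$ to build $B_{r-2}$, and $d_G(w_i)\ge k\lchuto$ for the mixed-spine case with $f(F)\le k\lchuto$. So the ``bookkeeping burden'' you defer is not routine reconciliation; it is the actual proof, and as written the argument has a real hole there.
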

When $F=K_r$, we easily have the following corollary.
\begin{cor}
   Let $k\geq 2$, $\ell\geq 4$ and $n\geq 4k^2\ell^2(k\ell+r)(\lchuto+1){\ell\choose \lchuto}$.

   \noindent  (i) We have
    $$ex(n,\{K_3,kP_\ell\})=\left(n-k\lchuto+1\right)\left(k\lchuto-1\right).$$
     (ii) If $4\leq r\leq k\lchuto+1$, then we have
    $$ex(n,\{K_r,kP_\ell\})=\left(n-k\lchuto+1\right)\left(k\lchuto-1\right)+e\left(T\left(k\lchuto-1,r-2\right)\right).$$
    (iii) If $r\geq k\lchuto+2$, then we have
    $$ex(n,\{K_r,kP_\ell\})=\left(n-k\lchuto+1\right)\left(k\lchuto-1\right)+{k\lchuto-1\choose 2}+D_\ell,$$
    where $D_\ell=1$ if $\ell$ is odd, and $D_\ell=0$ if $\ell$ is even.
\end{cor}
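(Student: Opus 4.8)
The plan is to derive all three parts directly from Theorems~\ref{thm: ex(n,kpl,F)} and~\ref{thm: when beta F=1} by specializing $F=K_r$; the constraint on $n$ in the corollary is exactly the threshold of both theorems, so the only real work is to decide which theorem applies and to evaluate the additive constants $b_\ell$ and $C_{F,k,\ell}$ in closed form. First I would record the two structural facts about $K_r$. Since any edge of $K_3$ is incident with the other two, $\beta_1(K_3)=1$, whereas for $r\ge 4$ any single edge leaves the edge between two of the remaining $r-2\ge 2$ vertices uncontrolled, so $\beta_1(K_r)\ge 2$. As $K_3$ has no isolated vertices, part~(i) falls under Theorem~\ref{thm: when beta F=1} and parts~(ii)--(iii) under Theorem~\ref{thm: ex(n,kpl,F)}; both require $\chi(K_r)=r\ge 3$, which holds throughout.

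Next I would identify the two families. An independent set of $K_r$ has at most one vertex and a vertex set inducing at most one edge has at most two vertices, so $\mathscr{G}_1(K_r)=\{K_r,K_{r-1}\}$ and $\mathscr{G}_2(K_r)=\{K_r,K_{r-1},K_{r-2}\}$. Because $K_{r-2}\subseteq K_{r-1}\subseteq K_r$, a graph is $\mathscr{G}_1(K_r)$-free iff it is $K_{r-1}$-free and $\mathscr{G}_2(K_r)$-free iff it is $K_{r-2}$-free. Hence, writing $m=k\lchuto-1$ and invoking Tur\'an's theorem together with the convention $e(T(n,k))=\binom{n}{2}$ for $k\ge n$, I get $ex(m,\mathscr{G}_1(K_r))=ex(m,K_{r-1})=e(T(m,r-2))$ and $ex(m,\mathscr{G}_2(K_r))=ex(m,K_{r-2})=e(T(m,r-3))$.

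Part~(i) then follows at once, since $v(K_3)=3\le k\lchuto+1$ (using $\ell\ge 4,k\ge 2$) shows $(K_3,k,\ell)$ satisfies condition~(2) of property $\mathscr{P}$, whence $C_{K_3,k,\ell}=0$. For parts~(ii) and~(iii) I would substitute the two Tur\'an values into $b_\ell$. The even case is immediate because $b_\ell=e(T(m,r-2))$, giving $e(T(k\lchuto-1,r-2))$ in (ii); and when $r\ge k\lchuto+2$ forces $r-2>m$, so that $T(m,r-2)=K_m$, it gives $\binom{k\lchuto-1}{2}$ with $D_\ell=0$ in (iii). The step needing care is the odd case, where $b_\ell=\max\{1+e(T(m,r-3)),\,e(T(m,r-2))\}$. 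In the regime of~(ii) the hypothesis $r\le k\lchuto+1$ says precisely $r-2\le m$, so $1\le r-3<r-2\le m$ and $e(T(m,\cdot))$ is strictly increasing on $[1,m]$; being integers, $e(T(m,r-2))\ge e(T(m,r-3))+1$, so the maximum collapses to $e(T(m,r-2))$ and matches the even case. In the regime of~(iii), $r-3\ge m$ forces both $T(m,r-2)$ and $T(m,r-3)$ to equal $K_m$, so the maximum is $1+\binom{m}{2}$, i.e.\ $D_\ell=1$. The only genuine obstacle is this monotonicity-plus-integrality comparison of Tur\'an edge counts, and it is exactly the threshold $r=k\lchuto+1$ versus $r=k\lchuto+2$ that separates the two behaviours recorded in (ii) and (iii).
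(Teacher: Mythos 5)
Your proof is correct and follows essentially the same route as the paper's: specialize Theorem~\ref{thm: when beta F=1} to $K_3$ via property $\mathscr{P}$ and Theorem~\ref{thm: ex(n,kpl,F)} to $K_r$ ($r\ge 4$) via $\beta_1(K_r)\ge 2$, identify $ex(k\lchuto-1,\mathscr{G}_1(K_r))=e(T(k\lchuto-1,r-2))$ and $ex(k\lchuto-1,\mathscr{G}_2(K_r))=e(T(k\lchuto-1,r-3))$ by Tur\'an's theorem, and compare these across the threshold $r=k\lchuto+1$ versus $r=k\lchuto+2$. Your only addition is to spell out the monotonicity-plus-integrality argument behind the paper's remark that $ex(n,K_{r-1})\ge ex(n,K_{r-2})+1$ when $r\le n+2$, which the paper leaves as ``easy to check.''
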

\begin{proof}
     $(i)$ Since $\beta_1(K_3)=1$, $\chi(K_3)=3$ and $(K_3,k,\ell)$ has property $\mathscr{P}$, the result holds from Theorem \ref{thm: when beta F=1}.

     Note that $\beta_1(K_r)\ge2$ and $\chi(K_r)\ge3$  when  $r\geq 4$. We have $ex(n,\mathscr{G}_1(K_r))=ex(n,K_{r-1})=e(T(n,r-2))$ and $ex(n,\mathscr{G}_2(K_r))=ex(n,K_{r-2})=e(T(n,r-3))$. It is easy to check when $r\leq n+2$, $ex(n,K_{r-1})\geq ex(n,K_{r-2})+1$.

     $(ii)$ If $4\leq r\leq k\lchuto+1$, $ex(k\lchuto-1,\mathscr{G}_2(K_r))+1\leq ex(k\lchuto-1,\mathscr{G}_1(K_r))$. So $b_\ell=ex(k\lchuto-1,\mathscr{G}_1(K_r))$ regardless the parity of $\ell$ and the result holds by Theorem \ref{thm: ex(n,kpl,F)}.

   $(iii)$ If $r\geq k\lchuto+2$, $ex(k\lchuto-1,\mathscr{G}_2(K_r))=ex(k\lchuto-1,\mathscr{G}_1(K_r))={k\lchuto-1\choose 2}$. Thus $b_\ell={k\lchuto-1\choose 2}+1$ when $\ell$ is odd, and $b_\ell={k\lchuto-1\choose 2}$ when $\ell$ is even. Thus the result holds by Theorem \ref{thm: ex(n,kpl,F)}.
\end{proof}
\vskip.2cm
Let $\sigma(F)$ denote the maximum positive integer such that $I_{v(F)}+M_{\sigma(F)}$ is $F$-free. Note that $\sigma(F)$ may be infinite. For example,   $I_{n_1}+M_{n_2}$ is $F$-free for all $n_1,n_2\geq 1$ if $K_4\subseteq F$ which implies $\sigma(F)=\infty$.
Assume $\sigma(F) < \infty$. By the definition of $\sigma(F)$, there is a copy of $F$ in $I_{v(F)}+M_{\sigma(F)+1}$ and for every $e\in M_{\sigma(F)+1}$, $e\cap V(F)\neq \emptyset$. So $\sigma(F) \leq v(F)$. Define a family of graphs $\mathscr{H}_i(F)$ ( $i \geq 0$), as follows:
\begin{align*}
    \mathscr{H}_i(F) = \{F[V(F)\backslash S] : S \subseteq V(F), \ \Delta(F[S]) \le 1, \ e(F[S]) \leq i\}.
\end{align*}
Note that $\mathscr{H}_0(F)$ and $\mathscr{H}_1(F)$ correspond to $\mathscr{G}_1(F)$ and $\mathscr{G}_2(F)$, respectively. We have $\mathscr{H}_i(F) \subseteq \mathscr{H}_{i+1}(F)$. For sufficiently large $i_0$ ($i_0 \geq \frac{v(F)}{2}$), $\mathscr{H}_i(F)$ becomes fixed for $i \geq i_0$. Let $\mathscr{H}(F) = \bigcup_{i=1}^{\infty} \mathscr{H}_i(F) = \bigcup_{i=1}^{i_0} \mathscr{H}_i(F)$, i.e.
\begin{align*}
    \mathscr{H}(F) = \{F[V(F)\setminus S] : S \subseteq V(F), \ \Delta(F[S]) \le 1\}.
\end{align*}
The following theorem is another main result of the paper.
\begin{theorem}\label{p3} Let $F$ be a graph with $\chi(F) \geq 3$.
   Let $k \geq 2$ and $n \geq 9(k^2+k+1)+2v(F)$.  If $\sigma(F) = \infty$, then we have
    \begin{align*}
        ex(n,\{k P_3,F\}) = (k-1)(n-k+1) + \left\lfloor \frac{n-k+1}{2} \right\rfloor + ex(k-1, \mathscr{H}(F)).
    \end{align*}
If $\sigma(F) < \infty$, then we have
    \begin{align*}
        ex(n,\{k P_3,F\}) = \max_{0 \leq i \leq \sigma(F)} \left\{(k-1)(n-k+1) + i + ex(k-1, \mathscr{H}_i(F))\right\}.
    \end{align*}
\end{theorem}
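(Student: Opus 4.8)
The plan is to establish matching lower and upper bounds. For the lower bound I construct the extremal graphs explicitly. Fix a set $A$ with $|A|=k-1$ and put $B=V\setminus A$ with $|B|=n-k+1$, joining every vertex of $A$ to every vertex of $B$. If $\sigma(F)=\infty$, place a matching of size $\lfloor(n-k+1)/2\rfloor$ on $B$ and let $G[A]$ be an extremal $\mathscr{H}(F)$-free graph on $k-1$ vertices; if $\sigma(F)<\infty$, for the index $i$ attaining the maximum place a matching of size exactly $i$ on $B$ and let $G[A]$ be an extremal $\mathscr{H}_i(F)$-free graph. Each such $G$ is $kP_3$-free because $\Delta(G[B])\le 1$, so every copy of $P_3$ meets $A$ and there are at most $|A|=k-1$ disjoint copies. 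Each is $F$-free because a hypothetical copy of $F$ would split as $V_A\subseteq A$, $V_B\subseteq B$ with $\Delta(F[V_B])\le 1$ (since $F[V_B]\subseteq G[B]$) and $e(F[V_B])$ at most the matching size, forcing $F[V_A]=F[V(F)\setminus V_B]$ to be a member of $\mathscr{H}(F)$ (respectively $\mathscr{H}_i(F)$) embedded in $G[A]$, contradicting the choice of $G[A]$. Counting edges yields exactly the claimed values.

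For the upper bound, let $G$ be extremal and set $A=\{v:d_G(v)>n/2\}$. A greedy argument gives $|A|\le k-1$: if there were $k$ such vertices I would build $k$ disjoint copies of $P_3$ with these as centres, at each step selecting two private neighbours outside the at most $4k$ already-used-or-reserved vertices (possible since each centre has degree $>n/2\gg 4k$ for large $n$), contradicting $kP_3$-freeness. Writing $B=V\setminus A$, the same centring trick shows $G[B]$ is nearly a matching: more than $k-1-|A|$ disjoint copies of $P_3$ inside $G[B]$ could be combined with $|A|$ copies centred on $A$ to produce $kP_3$; hence $G[B]$ becomes $P_3$-free after deleting a number of vertices bounded in terms of $k$. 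Combining $e(G)=e(G[A])+e(A,B)+e(G[B])$ with $e(A,B)\le |A||B|$ and the Erd\H{o}s--Gallai bound (Theorem \ref{G}) on the almost-matching $G[B]$ shows that any extremal $G$ must in fact have $|A|=k-1$ and $G[B]$ a genuine matching: otherwise the leading term $|A|\cdot n$ drops by at least $n$, which for large $n$ overwhelms every $O(k^2)$ correction. This is exactly where the hypothesis that $n$ is large is used.

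It remains to control $e(G[A])$ in terms of the matching size $i^{*}:=e(G[B])$ and the $F$-free condition. Let $D=(k-1)|B|-e(A,B)$ be the bipartite deficiency, so $e(G)=e(G[A])+(k-1)|B|+i^{*}-D$. The dichotomy is: either $G[A]$ is $\mathscr{H}_{i^{*}}(F)$-free, giving $e(G[A])\le ex(k-1,\mathscr{H}_{i^{*}}(F))$; or $G[A]$ contains some $F[V(F)\setminus S]\in\mathscr{H}_{i^{*}}(F)$ with $\Delta(F[S])\le 1$ and $e(F[S])\le i^{*}$, in which case I attempt to complete it to a copy of $F$ by placing the edges of $F[S]$ on matching edges of $G[B]$ and the remaining vertices of $S$ on $B$-vertices adjacent to the at most $k-1$ vertices of $V(F)\setminus S$ they must reach. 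The number of placements failing the required adjacencies is controlled by $D$, so either the embedding succeeds (contradicting $F$-freeness) or $D$ exceeds the at most $\binom{k-1}{2}$ edges available inside $A$, whence $e(G[A])-D\le 0$. In every case $e(G)\le (k-1)(n-k+1)+\bigl(i^{*}+ex(k-1,\mathscr{H}_{i^{*}}(F))\bigr)$. When $\sigma(F)=\infty$ the matching may be as large as $\lfloor(n-k+1)/2\rfloor$ and $\mathscr{H}_{i^{*}}(F)=\mathscr{H}(F)$, yielding the first formula; when $\sigma(F)<\infty$ I finish by showing that $i+ex(k-1,\mathscr{H}_i(F))$ is maximised at some $i\le\sigma(F)$.

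I expect the main obstacle to be the last two steps working in tandem: quantifying $D$ precisely enough that the embedding argument and the edge-count tradeoff are tight rather than lossy by additive constants, and proving that enlarging the matching past $\sigma(F)$ never pays. The delicate point is that $\sigma(F)+1$ is exactly the least number of matching edges for which $\mathscr{H}_i(F)$ first acquires an independent (edgeless) member, and one must show this is precisely where the gain of $+1$ per extra matching edge is overtaken by the forced loss in $ex(k-1,\mathscr{H}_i(F))$ coming from having to avoid that edgeless member together with the clique-bearing members of $\mathscr{H}_i(F)$ (which exist since $\chi(F)\ge 3$). Making these balances exact, uniformly over the structure of $F$, is the heart of the argument.
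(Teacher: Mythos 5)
Your lower-bound constructions are correct and coincide with the paper's (the graphs $I_{k-1}+M$ with an extremal $\mathscr{H}_i(F)$-free graph placed on the $(k-1)$-side, as in Lemmas \ref{infinity} and \ref{ninfinity}), but the upper bound has a genuine gap at the step where you force $|A|=k-1$ and $G[B]$ to be a matching. With your definition $A=\{v: d_G(v)>n/2\}$, a vertex of $B$ may still have degree as large as $n/2$. Your stated tools give only that $G[B]$ is $(k-|A|)P_3$-free, and by Theorem \ref{thm: ex(n,kP3)} such a graph can have roughly $(k-1-|A|)n+n/2$ edges, so the bound $e(G)\le e(G[A])+|A|\,|B|+e(G[B])$ has the \emph{same} leading term $(k-1)n+n/2$ for every $|A|\le k-1$; nothing ``drops by at least $n$.'' The route you actually describe is even weaker: deleting the $\le 3(k-1-|A|)$ vertices of a maximal family of disjoint $P_3$'s in $G[B]$ and applying Erd\H{o}s--Gallai costs up to $n/2$ edges per deleted vertex, giving $e(G[B])\le 3(k-1-|A|)n/2+n/2$, which for $|A|=k-2$ allows $e(G)\approx kn$, far above the target $(k-1)n+n/2$. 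The dominating vertices of the extremal $(k-|A|)P_3$-free graphs would indeed land in $A$, but a star centre of degree just below $n/2$ hides inside $B$, and to exclude such configurations you would need a capped-degree extremal lemma (a $(j+1)P_3$-free graph with $\Delta\le n/2$ has at most about $jn/2+n/2$ edges) that you neither state nor prove. The paper sidesteps this entirely: since $e(G)>ex(n,\{(k-1)P_3,F\})$ it finds $k-1$ \emph{disjoint} copies of $P_3$, shows by comparing $e(G)$ with $e(G-V(P_3^i))$ that each copy contains a vertex $y_i$ of degree at least $n/9$, and \emph{defines} $A=\{y_1,\dots,y_{k-1}\}$; thus $|A|=k-1$ holds by construction, and Lemma \ref{lemmaA} yields $\Delta(G[V(G)\setminus A])\le 1$ in one stroke, eliminating all medium-degree vertices of $B$ at once.

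The second gap is the one you yourself flag as open, and your caution there is well founded. Your deficiency bookkeeping can be repaired to give $e(G)\le (k-1)(n-k+1)+i^{*}+ex(k-1,\mathscr{H}_{i^{*}}(F))$, provided you count only matching edges both of whose endpoints are complete to $A$ (each spoiled edge is absorbed by one unit of the deficiency $D$); the paper instead uses a cleaner replacement argument, choosing $G$ extremal with $e(G[V(G)\setminus A])$ minimum and making every vertex of $B$ complete to $A$. But the final claim that $i+ex(k-1,\mathscr{H}_i(F))$ is maximised at some $i\le\sigma(F)$ cannot be proved as stated: the only mechanism capping $i$ is that $\mathscr{H}_{\sigma(F)+1}(F)$ contains an edgeless member $I_j$, which defeats every choice of $G[A]$ only when $j\le k-1$. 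When every such $j$ exceeds $k-1$ --- take $F=K_2+I_t$ with $t\ge k$, so that $\sigma(F)=0$ and $K_2\in\mathscr{H}_0(F)$ --- the graph $K_{k-1,n-k+1}$ with a near-perfect matching added to the large side is $\{kP_3,F\}$-free and exceeds the value $\max_{0\le i\le \sigma(F)}\{(k-1)(n-k+1)+i+ex(k-1,\mathscr{H}_i(F))\}$, so the balance fails for $\sigma$ defined via $I_{v(F)}$. Note that the paper's own Lemma \ref{ninfinity} glosses the same point: its one-line deduction of $e(G[V(G)\setminus A])\le\sigma(F)$ tacitly requires about $v(F)$ vertices joined to the matching while only the $k-1$ vertices of $A$ are available, so the argument really needs $\sigma$ taken relative to $I_{k-1}$ rather than $I_{v(F)}$. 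In short: your instinct that this balancing step is ``the heart of the argument'' is correct and it remains unproved in your proposal, while the earlier $|A|=k-1$ step fails outright as written.
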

When $F=K_r$,  we have the following result.
\begin{cor}
    For $k \geq 2$ and $n \geq 9(k^2+k+1)+2r$,  we have
 $$ex(n,\{k P_3,K_3\}) = (k-1)(n-k+1)$$
    and for $r\ge 4$,
    $$ex(n,\{k P_3,K_r\}) =(k-1)(n-k+1) + \left\lfloor \frac{n-k+1}{2} \right\rfloor +e(T(k-1,r-3)).$$
\end{cor}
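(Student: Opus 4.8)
The plan is to apply Theorem \ref{p3} directly with $F = K_r$, after evaluating the three invariants appearing in its statement: $\sigma(K_r)$, the family $\mathscr{H}(K_r)$ (or $\mathscr{H}_i(K_r)$), and the resulting Tur\'an number on $k-1$ vertices. Since $\chi(K_r) = r \geq 3$ and $v(K_r) = r$, the hypotheses of Theorem \ref{p3} hold under $n \geq 9(k^2+k+1)+2r$, so the real content is just these computations, and the two displayed formulas will turn out to correspond exactly to whether $\sigma(K_r)$ is finite.

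First I would pin down $\sigma(K_r)$ by bounding the largest clique of $I_r + M_s$. Any clique there meets the independent part $I_r$ in at most one vertex and the matching $M_s$ in at most two vertices (the two endpoints of a single matching edge), so $I_r + M_s$ contains no $K_4$ for any $s$. Hence for $r \geq 4$ the graph $I_r + M_s$ is $K_r$-free for every $s$, giving $\sigma(K_r) = \infty$; whereas for $r = 3$ already $I_3 + M_1 \supseteq K_3$ (take a matching edge together with any vertex of $I_3$), so the largest $s$ with $I_3 + M_s$ being $K_3$-free is $s = 0$, i.e. $\sigma(K_3) = 0$. This is precisely the dichotomy that splits the statement.

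For $r \geq 4$ I use the $\sigma(F) = \infty$ branch. If $S \subseteq V(K_r)$ satisfies $\Delta(K_r[S]) \le 1$, then $K_r[S] = K_{|S|}$ forces $|S| \le 2$, and deleting $S$ leaves $K_{r - |S|}$; thus $\mathscr{H}(K_r) = \{K_r, K_{r-1}, K_{r-2}\}$. Since avoiding the smallest of these cliques is the binding constraint, $ex(k-1, \mathscr{H}(K_r)) = ex(k-1, K_{r-2}) = e(T(k-1, r-3))$ by Tur\'an's theorem \cite{turan1954}, and substitution into Theorem \ref{p3} gives the second formula. For $r = 3$ I use the $\sigma(F) < \infty$ branch with $\sigma(K_3) = 0$, so the maximum collapses to the single index $i = 0$. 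Here $\mathscr{H}_0(K_3)$ consists of the $K_3[V(K_3) \setminus S]$ with $S$ independent, i.e. $|S| \le 1$, so $\mathscr{H}_0(K_3) = \{K_3, K_2\}$; a graph on $k-1$ vertices that avoids $K_2$ has no edges, whence $ex(k-1, \mathscr{H}_0(K_3)) = 0$ and the first formula follows.

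The argument is routine once Theorem \ref{p3} is available, so there is no deep obstacle. The points that require care are the clique analysis of $I_r + M_s$ that determines $\sigma(K_r)$ — in particular the jump from $\sigma(K_3) = 0$ to $\sigma(K_r) = \infty$ for $r \ge 4$, which is what forces the two separate cases — and the observation that the family $\{K_r, K_{r-1}, K_{r-2}\}$ reduces to the single forbidden clique $K_{r-2}$, so that Tur\'an's theorem applies cleanly.
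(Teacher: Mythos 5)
Your proposal is correct and takes essentially the same route as the paper: both apply Theorem \ref{p3} with $F=K_r$, using $\sigma(K_3)=0$, $\sigma(K_r)=\infty$ for $r\ge 4$, and the reduction of $\mathscr{H}_0(K_3)$ and $\mathscr{H}(K_r)$ to the binding forbidden cliques $K_2$ and $K_{r-2}$, the latter evaluated by Tur\'an's theorem as $e(T(k-1,r-3))$. The only differences are cosmetic: you make explicit the clique analysis of $I_r+M_s$ that the paper leaves implicit, and you list $K_3$ (resp.\ $K_r$) in the families where the paper omits them, which is harmless since forbidding the smallest clique dominates.
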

\begin{proof}
    When $r=3$, we have $\sigma(K_3)=0$ and $\mathscr{H}_0(K_3)=\{K_2\}$. Thus $ex(k-1,\mathscr{H}_0(K_3))=0$. By Theorem \ref{thm: ex(n,kP3)}, we have the exact result of $ex(n,\{k P_3,K_3\})$.

    When $r\geq 4$,  $\sigma(K_r)=\infty$ and $\mathscr{H}(K_r)=\{K_{r-1},K_{r-2}\}$. By Theorem \ref{thm: ex(n,kP3)}, we have
    $$ex(n,\{k P_3,K_r\})=(k-1)(n-k+1)+ \left\lfloor \frac{n-k+1}{2} \right\rfloor +ex(k-1,K_{r-2}),$$
    and we are done.
\end{proof}

A forest is called \textbf{linear forest} if every component is a path.
 Suppose $H$ is a linear forest with $k$ components and let $I_1$ (resp. $I_2$) be the collection of indices of even paths (resp. odd paths). Then we have the following result.
\begin{theorem}\label{thm: linear forest and beta 2}
     Let  $H=(\cup_{i\in I_1}P_{2\ell_i})\cup (\cup_{j\in I_2}P_{2\ell_j+1})$ with $|I_1|+|I_2|=k$ and $\ell=\sum_{i=1}^k\ell_i$, where  $\ell_i\geq 2$ when $i\in I_1$ and $\ell_j\geq 1$ when $j\in I_2$. Let $F$ be a graph with $\chi(F)\geq 3$ and $\beta_1(F)\geq 2$. Assume  $H\neq kP_3$ and $n$ is large enough.

    (i) If  $I_1\not=\emptyset$, then $$ex(n,\{H,F\})=(\ell-1)(n-\ell+1)+ex(\ell-1,\mathcal{G}_1(F)).$$

    (ii)    If $I_1=\emptyset$, then $$ex(n,\{H,F\})=(\ell-1)(n-\ell+1)+\max{\{ex(\ell-1,\mathcal{G}_1(F)},ex(\ell-1,\mathcal{G}_2(F))+1\}.$$
 \end{theorem}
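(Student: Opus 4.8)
The plan is to establish matching lower and upper bounds, adapting the approach used for Theorem~\ref{thm: ex(n,kpl,F)}, with $\ell=\sum_i\ell_i$ now playing the role that $k\lchuto$ played in the uniform case. For the lower bound I would exhibit the extremal graphs explicitly. Fix a set $A$ of $\ell-1$ vertices and join it completely to an independent set $I$ with $n-\ell+1$ vertices, contributing $(\ell-1)(n-\ell+1)$ edges. For part (i) I place inside $A$ a $\mathscr{G}_1(F)$-free graph achieving $ex(\ell-1,\mathscr{G}_1(F))$ edges; for part (ii) I take the better of this graph and the graph that puts a $\mathscr{G}_2(F)$-free graph inside $A$ together with a single extra edge inside $I$. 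The $F$-freeness check is routine: any copy of $F$ would have its $I$-part inducing an independent set (resp.\ at most one edge), so the rest of $F$ would embed into $A$ and lie in $\mathscr{G}_1(F)$ (resp.\ $\mathscr{G}_2(F)$), contradicting the choice of the graph on $A$.

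The combinatorial heart of the $H$-freeness, and the reason (i) and (ii) differ, is a count of how many vertices of $A$ each path must consume. Since $I$ is (almost) independent, every $P_{2\ell_i}$ uses at least $\ell_i$ vertices of $A$ and every $P_{2\ell_j+1}$ uses at least $\ell_j$, so any copy of $H$ needs at least $\ell>\ell-1=|A|$ vertices of $A$; hence no construction with $I$ independent contains $H$. I would then quantify the effect of edges in $I$ by a block count: a path with $i$ vertices in $I$ spanning $c$ maximal $I$-segments uses at least $c-1$ vertices of $A$ and exactly $i-c$ edges of $I$, which yields that an \emph{even} path can save one $A$-vertex using a single $I$-edge (for instance $P_4$ embeds as $x\text{-}y\text{-}a\text{-}w$ with $xy$ the $I$-edge and only $a\in A$), whereas an \emph{odd} path can save one only after absorbing two independent $I$-edges. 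This is exactly why one extra edge in $I$ is admissible precisely when $I_1=\emptyset$, and why in that case two edges are inadmissible, since two edges can be woven into one odd path to save an $A$-vertex and complete a copy of $H$.

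For the upper bound, let $G$ be an extremal $\{H,F\}$-free graph. As $G$ is $H$-free and $H\neq kP_3$, for large $n$ the $H$-free structure (Theorem~\ref{thm: old linear forest}) forces $G$ to be close to $K_{\ell-1}+$(sparse). I make this precise by letting $A$ be the set of vertices of sufficiently large degree and proving $|A|\le\ell-1$ via a greedy embedding: $\ell$ large-degree vertices serve as the ``centers'' of the paths of $H$ (distributing $\ell_i$ centers to the $i$-th path), and their abundant neighbourhoods supply pairwise-distinct ``connector'' vertices, producing $H$ and a contradiction. Writing $G'=G-A$ and splitting $e(G)=e(G[A])+e(A,V(G'))+e(G')$, I bound $e(A,V(G'))\le(\ell-1)(n-\ell+1)$. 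The same embedding idea, with $G'$-edges now playing the role of the $I$-edge above, gives $e(G')=0$ when $I_1\neq\emptyset$ and $e(G')\le1$ when $I_1=\emptyset$. Finally, $F$-freeness bounds $e(G[A])$: if $G[A]$ contained a member $F[V(F)\setminus S]$ of $\mathscr{G}_1(F)$ (resp.\ $\mathscr{G}_2(F)$ when $e(G')=1$), one could realise $S$ inside the large, almost-independent set $G'$ (using its one edge for the allowed edge of $F[S]$) and recover $F\subseteq G$; hence $G[A]$ is $\mathscr{G}_1(F)$-free (resp.\ $\mathscr{G}_2(F)$-free), so $e(G[A])\le ex(\ell-1,\mathscr{G}_1(F))$ (resp.\ $ex(\ell-1,\mathscr{G}_2(F))$). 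Summing and taking the maximum over the two admissible configurations in case (ii) matches the claimed value.

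The main obstacle I anticipate is the structural step isolating $A$: upgrading ``$G$ is $H$-free'' into ``$G$ is $K_{\ell-1}+$(at most one edge) outside the $F$-constrained interior,'' which means simultaneously controlling the vertices of intermediate degree and showing $A$ is essentially complete to $G'$, so that the clean estimate $e(A,V(G'))\le(\ell-1)(n-\ell+1)$ is not lossy. The embedding arguments (both for $|A|\le\ell-1$ and for bounding $e(G')$ and $e(G[A])$) also require care to keep the chosen connector vertices pairwise distinct and disjoint from the centers, which is where the largeness of $n$ enters; and the even/odd dichotomy must be derived from the precise block count above, tracking how many $A$-vertices a path of each parity needs given a limited number of available $G'$-edges.
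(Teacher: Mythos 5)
Your lower-bound constructions and the parity analysis certifying their $H$-freeness are exactly the paper's (its graphs $H_1'(n,k,\ell,F)$ and $H_2'(n,k,\ell,F)$), and your final step --- bounding $e(G[A])$ by $ex(\ell-1,\mathscr{G}_1(F))$ or $ex(\ell-1,\mathscr{G}_2(F))$ by realising the deleted set $S$ inside the large outside part --- also matches. The genuine gap is the structural step you yourself flag as the ``main obstacle'': it is not an implementation detail but the bulk of the paper's proof, and your sketch of it fails as stated. First, defining $A$ as ``the set of vertices of sufficiently large degree'' begs the question of the threshold: your greedy embedding of $H$ through $\ell$ centres requires consecutive centres to share many neighbours, i.e.\ near-full degree $n-O(1)$, and since the extremal graph has average degree only about $2(\ell-1)$, the existence of $\ell-1$ such vertices must be proved, not assumed. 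The paper does this by a bootstrap (Lemmas \ref{lem: bounded edges of E(H',V-H')} and \ref{lem: if a copy H' contains many small degree vertices, then l' vertices have large degree}): any copy of $H'=H-H_1$ sends at least $\ell'n-6\ell^2$ edges outward, yielding a set $A$ of $\ell'$ vertices of degree at least $n-c'$, after which vertices of merely moderate degree are upgraded to degree $n-c'$ by re-embedding them into a pseudo-bipartite copy of $H'$, giving the core $A\cup B$ with $|A\cup B|=\ell-1$ (Lemma \ref{lemma: common neighbours}). Second, your claim that ``the same embedding idea gives $e(G')=0$ when $I_1\neq\emptyset$ and $e(G')\le 1$ when $I_1=\emptyset$'' is false for an arbitrary extremal graph: an edge both of whose endpoints have few neighbours in the core cannot be woven into any path of $H$, so embedding alone cannot exclude it, and indeed the low-degree class $D_3$ may span an $H_1$-free graph with linearly many edges. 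The paper instead proves the pivotal Lemma \ref{lemma: no H_1 contains l1-1 vertices in AUB} (every copy of $H_i$ in $G$ meets $A\cup B$ in at least $\ell_i$ vertices, obtained by a cross-edge count rather than an embedding), partitions the remaining vertices into degree classes $D_0,\dots,D_3$, runs the counting inequalities (\ref{eq: even}) and (\ref{eq: odd}), and only \emph{may assume} $D_3=D_2=\emptyset$ and $e(G[D_1])\le 1$ after vertex-replacement modifications that preserve extremality --- a normalisation among extremal graphs, which is weaker than and different from what you assert.

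A second concrete failure point is the case $k=2$, $I_1=\emptyset$: there the coefficient of $|D_3|$ in the generic count (\ref{eq: odd}) is $\ell-1-\ell_1-\ell_k+\frac{1}{2}=-\frac{1}{2}<0$, so the uniform argument you outline cannot absorb the low-degree vertices, and the paper needs a separate, finer partition $D_1',D_2',D_3'$ with thresholds tied to $\ell_1$ rather than $\ell_k$, together with dedicated claims (e.g.\ no two independent edges in $G[D_1']$). Note also that on the upper-bound side the even/odd dichotomy is extracted differently from your block count: one shows that two independent edges in $D_1$ would produce a copy of $H_k$ with only $\ell_k-1$ core vertices --- using $H_k\neq P_3$, which is exactly where the hypothesis $H\neq kP_3$ and the ordering $\ell_1\le\cdots\le\ell_k$ enter --- contradicting Lemma \ref{lemma: no H_1 contains l1-1 vertices in AUB}. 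In short, your plan reproduces the paper's skeleton and both endpoints correctly, but the middle --- establishing the near-full-degree core, controlling intermediate- and low-degree vertices, and the degenerate two-odd-path case --- is precisely where the proof lives, and your proposed shortcut for it (a single degree threshold plus direct embedding) does not go through.
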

Let $H=(\cup_{i\in I_1}P_{2\ell_i})\cup (\cup_{j\in I_2}P_{2\ell_j+1})$, where  $\ell_i\geq 2$ when $i\in I_1$ and $\ell_j\geq 1$ when $j\in I_2$. Assume $|I_1\cup I_2|=k$ and $H\neq kP_3$. For a fixed graph $F$ with $\chi(F)\geq 3$ and $\beta_1(F)=1$, we say the pair $(F,H)$ has property $\mathscr{R}$ if one of the following conditions holds:

(1)  $I_1\neq \emptyset$;

(2) $v(F)=r \leq \sum_{i=1}^k \ell_i+1$;

(3) $N(F,K_3)=1$ and $f(F)\leq \sum_{i=1}^k \ell_i$.

Set $D_{F,H}=0$ if $(F,H)$ has property $\mathscr{R}$, and $D_{F,H}=1$ otherwise.
Then we have the result when $\beta_1(F)=1$.
\begin{theorem}\label{thm: linear forest and beta 1}
     Let  $H=(\cup_{i\in I_1}P_{2\ell_i})\cup (\cup_{j\in I_2}P_{2\ell_j+1})$, where $|I_1\cup I_2|=k\ge 2$,  $\ell_i\geq 2$ when $i\in I_1$ and $\ell_j\geq 1$ when $j\in I_2$. Assume $\ell=\sum_{i=1}^k\ell_i$ and $H\neq kP_3$. Let $F$ be a graph with $\chi(F)\geq 3$ and $\beta_1(F)=1$. When $n$ is large enough, we have
    $$ex(n,\{H,F\})=(\ell-1)(n-\ell+1)+D_{F,H}.$$
 \end{theorem}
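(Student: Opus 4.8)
The strategy is to run the argument for the companion result with $\beta_1(F)\ge 2$ (Theorem \ref{thm: linear forest and beta 2}), adding the extra book analysis forced by $\beta_1(F)=1$, exactly as the $kP_\ell$ case with $\beta_1(F)=1$ (Theorem \ref{thm: when beta F=1}) refines the case $\beta_1(F)\ge 2$ (Theorem \ref{thm: ex(n,kpl,F)}). Throughout I use that $\beta_1(F)=1$ with $\chi(F)\ge 3$ forces $F$ to be a subgraph of a book $B_t$: writing $\{u_1u_2\}$ for the control edge, every triangle of $F$ contains $u_1u_2$, the set $V(F)\setminus\{u_1,u_2\}$ is independent, and it splits into the $N(F,K_3)$ common neighbours of $u_1,u_2$ (the ``pages'') and the remaining neighbours of $u_1$ or of $u_2$ (the ``pendants''). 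Since $F$ is not bipartite, the complete bipartite graph with parts $A$ of size $\ell-1$ and $B$ of size $n-\ell+1$ is $F$-free; and it is $H$-free because a path $P_{2\ell_i}$ (resp.\ $P_{2\ell_j+1}$) whose $B$-side is independent must use at least $\ell_i$ (resp.\ $\ell_j$) vertices of $A$, so $H$ needs at least $\sum_i\ell_i=\ell>|A|$ vertices of $A$. This already gives the lower bound $(\ell-1)(n-\ell+1)$.

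For the ``$+D_{F,H}$'' I would add a single edge $e=b_1b_2$ inside $B$ and check that this graph is $\{H,F\}$-free exactly when $(F,H)$ fails $\mathscr{R}$, all containment being non-induced. The three failure clauses correspond to the three ways $e$ could be exploited. If $I_1\neq\emptyset$, one even path can spend the single $B$-edge to save one $A$-vertex, so $H$ embeds; hence clause (1) is needed for $H$-freeness. The edge $b_1b_2$ lies in a book with spine $b_1b_2$ and the $\ell-1$ pages $A$; routing all pages and pendants of $F$ into $A$ embeds $F$ precisely when $r-2\le \ell-1$, i.e.\ clause (2). Finally, each edge $a_ib_1$ lies in a book with the single page $b_2$ and with $\ell-2$ pendant slots in $A$ on the $b_1$-side; putting the low-degree spine vertex of $F$ there embeds $F$ precisely when $N(F,K_3)=1$ and $f(F)-2\le \ell-2$, i.e.\ clause (3). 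When all three clauses fail, every such embedding is blocked, giving a $\{H,F\}$-free graph with $(\ell-1)(n-\ell+1)+1$ edges.

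For the upper bound let $G$ be $\{H,F\}$-free with $n$ large. First, using $H$-freeness and the structural analysis of $H$-free graphs with many edges from the proofs of Theorems \ref{thm: old linear forest} and \ref{thm: when beta F=1}, I would extract a set $A$ with $|A|\le \ell-1$ dominating all but a bounded number of edges, with $G-A$ sparse, and reduce to the near-extremal configuration in which $A$ has size $\ell-1$, almost every vertex of $B:=V\setminus A$ is adjacent to all of $A$, and it suffices to bound $e(G[A])+e(G[B])$ by $D_{F,H}$. Now run the lower-bound embeddings in reverse. Any edge inside $A$, together with the large, almost complete join to $B$, yields a book with spine in $A$ whose pages and pendants can be taken in $B$, forcing $F$; hence $e(G[A])=0$. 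For $e(G[B])$: two edges in $B$ (disjoint, or sharing a vertex and giving a $B$-path of length three) let one path of $H$ save an $A$-vertex, forcing $H$; so $e(G[B])\le 1$. Finally, when $\mathscr{R}$ holds, a single edge of $G[B]$ is already fatal --- by the even-path argument if clause (1) holds, and by the spine-$b_1b_2$ or spine-$a_ib_1$ book containing $F$ if clause (2) or (3) holds --- so $e(G[B])=0$; this matches $D_{F,H}$ in every case.

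The main obstacle is the upper-bound reduction to the near-extremal configuration: the structural step yields only an approximate description, so the embeddings of $H$ (the run-counting with one or two short $B$-runs) and of $F$ (placing its spine on a stray edge and filling pages and pendants) must be carried out in a host with some missing join edges and a bounded but nonzero set of stray edges, for both parity regimes and all three clauses of $\mathscr{R}$ simultaneously; in particular one must rule out that deficiencies in the $A$--$B$ join are compensated by extra edges of $G[B]$ without creating $F$ or $H$, which is where the largeness of $n$ and the bulk of the bookkeeping are spent. The genuinely new point beyond the $\beta_1(F)\ge 2$ argument of Theorem \ref{thm: linear forest and beta 2} is the clause-by-clause control of the \emph{single} extra edge, since for $\beta_1(F)=1$ the book $K_2+I_{\ell-1}$ built from one extra edge and the $\ell-1$ dominating vertices can itself contain $F$.
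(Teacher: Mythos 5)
Your proposal follows the paper's proof essentially step for step: the same lower-bound constructions ($K_{\ell-1,n-\ell+1}$, plus one edge in the large part when $\mathscr{R}$ fails, analyzed by placing the control edge of $F$ on the $b_1b_2$-spine or an $a_ib_1$-spine book exactly as the paper does), and the same upper-bound plan of reducing the extremal graph to a dominating set of size $\ell-1$ with near-complete join (the paper's Lemmas \ref{lem: bounded edges of E(H',V-H')}--\ref{lem: number of edge in D}, shared with Theorem \ref{thm: linear forest and beta 2}), then forcing $e(G[A\cup B])=0$ via the book $B_{r-2}$ and handling the single remaining edge clause-by-clause through property $\mathscr{R}$. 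The structural reduction you flag as the main obstacle is precisely what Section 4.2 of the paper carries out, so your outline is correct and takes the paper's own route.
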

When $F=K_r$, we easily have the following corollary.
\begin{cor}Let
    $H=(\cup_{i\in I_1}P_{2\ell_i})\cup (\cup_{j\in I_2}P_{2\ell_j+1})$, where $|I_1\cup I_2|=k\ge 2$,  $\ell_i\geq 2$ when $i\in I_1$ and $\ell_j\geq 1$ when $j\in I_2$.  Assume $\ell=\sum_{i=1}^k\ell_i$ and $H\neq kP_3$. When $n$ is large enough, we have the following results.

   \noindent (i) We have
    $$ex(n,\{K_3,H\})=\left(n-\ell+1\right)\left(\ell-1\right).$$
    (ii) If $4\leq r\leq \ell+1$, then we have
    $$ex(n,\{K_r,H\})=\left(n-\ell+1\right)\left(\ell-1\right)+e\left(T\left(\ell-1,r-2\right)\right).$$
    (iii) If $r\geq \ell+2$, then we have
    $$ex(n,\{K_r,H\})=\left(n-\ell+1\right)\left(\ell-1\right)+{\ell-1\choose 2}+D_H,$$
    where $D_H=1$ if $I_1=\emptyset$ and $D_H=0$ otherwise.
\end{cor}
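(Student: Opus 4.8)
The plan is to read each part directly off the two main theorems after specializing $F=K_r$: Theorem \ref{thm: linear forest and beta 1} handles $r=3$ (where $\beta_1(K_3)=1$), and Theorem \ref{thm: linear forest and beta 2} handles $r\ge 4$ (where $\beta_1(K_r)\ge 2$). Thus the only genuine work is to evaluate the auxiliary Tur\'an numbers and to resolve the maxima that the theorems produce. First I would record two structural facts about $K_r$. Since every pair of vertices of $K_r$ is adjacent, an independent vertex set has at most one vertex and a set inducing at most one edge has at most two vertices; hence $\mathscr{G}_1(K_r)=\{K_r,K_{r-1}\}$ and $\mathscr{G}_2(K_r)=\{K_r,K_{r-1},K_{r-2}\}$. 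Because $K_{r-2}\subseteq K_{r-1}\subseteq K_r$, avoiding either family reduces to avoiding its smallest clique, so by Tur\'an's theorem $ex(m,\mathscr{G}_1(K_r))=ex(m,K_{r-1})=e(T(m,r-2))$ and $ex(m,\mathscr{G}_2(K_r))=ex(m,K_{r-2})=e(T(m,r-3))$ for every $m$. I would also note $\chi(K_r)=r\ge 3$, that $\beta_1(K_3)=1$ (any one edge of a triangle is incident with the other two), and that $\beta_1(K_r)\ge 2$ for $r\ge 4$ (any edge of $K_r$ is vertex-disjoint from an edge on two other vertices).

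For part (i) I apply Theorem \ref{thm: linear forest and beta 1} with $F=K_3$. It suffices to show $D_{K_3,H}=0$, i.e.\ that $(K_3,H)$ has property $\mathscr{R}$. Condition (2) asks $r\le \ell+1$; here $r=3$ while $\ell=\sum_{i=1}^k\ell_i\ge k\ge 2$, so $3\le \ell+1$ always holds. Hence property $\mathscr{R}$ holds, the extra term vanishes, and $ex(n,\{K_3,H\})=(\ell-1)(n-\ell+1)$.

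For parts (ii) and (iii) I apply Theorem \ref{thm: linear forest and beta 2}, valid since $\beta_1(K_r)\ge 2$. It produces the leading term $(\ell-1)(n-\ell+1)$ together with $ex(\ell-1,\mathscr{G}_1(K_r))$ when $I_1\ne\emptyset$, and with $\max\{ex(\ell-1,\mathscr{G}_1(K_r)),\,ex(\ell-1,\mathscr{G}_2(K_r))+1\}$ when $I_1=\emptyset$. In case (ii), where $4\le r\le \ell+1$, I must check the maximum equals $e(T(\ell-1,r-2))$: since $e(T(m,t))$ is strictly increasing in $t$ on $1\le t\le m$, the integer gap satisfies $e(T(m,t))\ge e(T(m,t-1))+1$, and with $m=\ell-1$ and $r-2\le \ell-1$ this gives $e(T(\ell-1,r-2))\ge e(T(\ell-1,r-3))+1$ (the boundary $r=\ell+1$, where $T(\ell-1,\ell-1)=K_{\ell-1}$, yielding equality). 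Thus both sub-cases collapse to $e(T(\ell-1,r-2))$, proving (ii). In case (iii), where $r\ge \ell+2$, both $K_{r-1}$ and $K_{r-2}$ have more than $\ell-1$ vertices, so $ex(\ell-1,\mathscr{G}_1(K_r))=ex(\ell-1,\mathscr{G}_2(K_r))=\binom{\ell-1}{2}$; the formula then gives $(\ell-1)(n-\ell+1)+\binom{\ell-1}{2}$ when $I_1\ne\emptyset$ and $(\ell-1)(n-\ell+1)+\binom{\ell-1}{2}+1$ when $I_1=\emptyset$, which is precisely $D_H=0$, resp.\ $D_H=1$.

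The main obstacle is not conceptual but lies in the bookkeeping of the two maxima together with the Tur\'an-graph conventions near the boundary. One has to verify that the strict monotonicity $e(T(m,t))\ge e(T(m,t-1))+1$ really holds throughout the range $r-2\le \ell-1$, including the degenerate endpoint $r=\ell+1$ where the two candidates tie, and that the transition to the value $\binom{\ell-1}{2}$ occurs exactly at $r\ge \ell+2$, so that the ranges in (ii) and (iii) fit together without gap or overlap.
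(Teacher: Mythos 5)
Your proposal is correct and takes essentially the same route as the paper's own proof: specialize Theorem \ref{thm: linear forest and beta 1} to $F=K_3$ (checking property $\mathscr{R}$) and Theorem \ref{thm: linear forest and beta 2} to $F=K_r$ with $r\ge 4$, compute $ex(\ell-1,\mathscr{G}_1(K_r))=e(T(\ell-1,r-2))$ and $ex(\ell-1,\mathscr{G}_2(K_r))=e(T(\ell-1,r-3))$ via Tur\'an's theorem, and resolve the maxima using $ex(n,K_{r-1})\geq ex(n,K_{r-2})+1$ for $r\leq n+2$. Your explicit verifications (property $\mathscr{R}$ via condition (2), the monotonicity gap including the boundary $r=\ell+1$, and the collapse to $\binom{\ell-1}{2}$ when $r\geq \ell+2$) simply spell out what the paper labels ``easy to check.''
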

\begin{proof}
    $(i)$ Since $\beta_1(K_3)=1$, $\chi(K_3)=3$ and $(K_3,H)$ has property $\mathscr{R}$, the result holds from Theorem \ref{thm: linear forest and beta 1}.

     Note that $\beta_1(K_r)\ge2$ and $\chi(K_r)\ge3$  when  $r\geq 4$. We have $ex(n,\mathscr{G}_1(K_r))=ex(n,K_{r-1})=e(T(n,r-2))$ and $ex(n,\mathscr{G}_2(K_r))=ex(n,K_{r-2})=e(T(n,r-3))$.
     It is easy to check when $r\leq n+2$, $ex(n,K_{r-1})\geq ex(n,K_{r-2})+1$.

     $(ii)$  If $4\leq r\leq \ell+1$, $ex(\ell-1,\mathscr{G}_2(K_r))+1\leq ex(\ell-1,\mathscr{G}_1(K_r))$. So the result holds by Theorem \ref{thm: linear forest and beta 2}.

   $(iii)$ If $r\geq \ell+2$, $ex(\ell-1,\mathscr{G}_2(K_r))=ex(\ell-1,\mathscr{G}_1(K_r))={\ell-1\choose 2}$.  Thus the result holds by Theorem \ref{thm: linear forest and beta 2}.
\end{proof}

This paper is organized as follows. In section \ref{sec:kpl}, we will prove Theorems \ref{thm: ex(n,kpl,F)} and \ref{thm: when beta F=1}. In section 3, we will prove Theorem \ref{p3}. In section 4, we will prove Theorems \ref{thm: linear forest and beta 2} and \ref{thm: linear forest and beta 1}.
\section{Proofs of Theorems \ref{thm: ex(n,kpl,F)} and \ref{thm: when beta F=1}}\label{sec:kpl}

In this section, let $F$ be a graph of order $r$ with $\chi(F)\geq 3$ and we consider $ex(n,\{k P_\ell,F\})$, where $\ell\ge 4$ and $k\ge 2$.

\subsection{The low bounds}
In this subsection, we give the lower bound of $ex(n,\{k P_\ell,F\})$, where $\ell\ge 4$ and $k\ge 2$.

We first consider the case $\beta_1(F)\geq 2$.
Let $H_1(n,k,\ell,F)$ be the graph obtained by embedding a $\mathscr{G}_1(F)$-free graph of size $ex(k\lchuto-1,\mathscr{G}_1(F))$ to the smaller part of $K_{k\lchuto-1,n-k\lchuto+1}$. Let $H_2(n,k,\ell,F)$ be the graph obtained by embedding a $\mathscr{G}_2(F)$-free graph of size $ex(k\lchuto-1,\mathscr{G}_2(F))$  to the smaller part of $K_{k\lchuto-1,n-k\lchuto+1}$ and adding one edge in the larger part. It is easy to check $H_1(n,k,\ell,F)$ is $\{k P_\ell,F\}$-free and $H_2(n,k,\ell,F)$ is $\{k P_\ell,F\}$-free if $\ell$ is odd. Hence we have $$ex(n,\{F,k P_\ell\})\ge \left(n-k\lchuto+1\right)\left(k\lchuto-1\right)+b_\ell,$$where  $b_\ell=\max\{1+ex(k\lchuto-1,\mathscr{G}_2(F)),ex(k\lchuto-1,\mathscr{G}_1(F))\} $ when $\ell$ is odd and $b_\ell= ex(k\lchuto-1,\mathscr{G}_1(F))$ when $\ell$ is even.

Now we consider the case $\beta_1(F)=1$.
Note that $K_{k\lchuto-1,n-k\lchuto+1}$ is $\{k P_\ell,F\}$-free.
Then  $K_{k\lchuto-1,n-k\lchuto+1}$ provides the lower bound of $ex(n,\{k P_\ell,F\})$ when $(F,k,\ell)$ has condition $\mathscr{P}$. So  $ex(n,\{k P_\ell,F\})\ge (k\lchuto-1)(n-k\lchuto+1)$ if $\beta_1(F)=1$ and $(F,k,\ell)$ has condition $\mathscr{P}$.

Let $\beta_1(F)=1$ and $(F,k,\ell)$ do not have condition $\mathscr{P}$. Then $\ell$ is odd, $r=v(F)>k\lchuto+1$, and $N(F,K_3)\ge 2$ or $f(F)> k\lchuto$. Adding one edge to the larger part of $K_{k\lchuto-1,n-k\lchuto+1}$ and we denote the obtained graph by $G_0$. Then $e(G_0)=(k\lchuto-1)(n-k\lchuto+1)+1$. It is easy to check that  $G_0$ is $kP_\ell$-free by $\ell$ being odd.
We will prove it is $F$-free. Let $\{v_1v_2\}$ be the edge control set of $F$.

If $N(F,K_3)\geq 2$, then $v_1v_2$ is contained in at least two copies of $K_3$. If $G_0$ has a copy of $F$, then  $v_1v_2$ must be the unique edge  in the larger part. Since $|N_{G_0}(v_1)\cup N_{G_0}(v_2)|=k\lchuto-1+2<r$, we have a contradiction with $|N_F(v_1)\cup N_F(v_2)|=r$.
If $N(F,K_3)=1$, then  $f(F)\geq k\lchuto+1$. In this case, $v_1v_2$ is contained in the $K_3$. If $v_1v_2$ is the unique edge in the larger part, then we can similarly have a contradiction. If  $v_1$ (resp. $v_2$) is in the smaller part (resp.  the larger part) of $G_0$, then $\min\{d_{G_0}(v_1),d_{G_0}(v_2)\}=d_{G_0}(v_2)\leq k\lchuto$, but $f(F)=\min\{d_F(v_1),d_F(v_2)\}\geq k\lchuto+1$, a contradiction. Thus $G_0$ is $F$-free. So  $ex(n,\{k P_\ell,F\})\ge (k\lchuto-1)(n-k\lchuto+1)+1$ if $\beta_1(F)=1$ and $(F,k,\ell)$ has no condition $\mathscr{P}$.

\subsection{The upper bounds}

Now we consider the upper bound of $ex(n,\{k P_\ell,F\})$. In the following discussion, we assume that $G$ is a $\{k P_\ell,F\}$-free graph of order $n$ with $ex(n,\{k P_\ell,F\})$ edges, where $\ell\ge 4$, $k\ge 2$ and $n\geq  4k^2\ell^2(k\ell+r)(\lchuto+1){\ell\choose \lchuto} $. By the low bound of $ex(n,\{k P_\ell,F\})$,
\begin{equation}\label{eq-0}
e(G)\ge \left(k\lchuto-1\right)\left(n-k\lchuto+1\right).
\end{equation}

We first give some lemmas which will be used in the proof of Theorem \ref{thm: ex(n,kpl,F)}.
\begin{lemma}[\cite{bushaw2011turan}]\label{lem: t vertices with many common neighbors}
    Let $H$ be a graph on $n$ vertices with $m$ edges, $t\in \mathbb{N}$, and let $F_1,F_2$ be arbitrary graphs. If $F_1\cup F_2\not\subseteq H$, then any $F_1$ in $H$ contains $t$ vertices with shared neighborhood of size at least $ \frac{m'-(n-r)(t-1)}{r-t+1}/{r\choose t}$, where $m'=m-ex(n-r,F_2)-{r\choose 2}$, and $r=|V(F_1)|$.
\end{lemma}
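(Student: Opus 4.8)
The plan is to prove the lemma by fixing an arbitrary copy of $F_1$ in $H$, using the forbidden vertex-disjoint union $F_1\cup F_2$ to bound the edges lying outside this copy, and then extracting the desired $t$ vertices by an averaging argument over all $t$-subsets of the copy.

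First I would fix a copy of $F_1$ in $H$ and write $R=V(F_1)$ for its vertex set, so $|R|=r$ and $|V(H)\setminus R|=n-r$. The key structural observation is that $H[V(H)\setminus R]$ must be $F_2$-free: otherwise a copy of $F_2$ avoiding $R$ together with the fixed copy of $F_1$ on $R$ would form a vertex-disjoint $F_1\cup F_2$ in $H$, contradicting the hypothesis. Hence $e(H[V(H)\setminus R])\le ex(n-r,F_2)$, while trivially $e(H[R])\le \binom{r}{2}$. Subtracting these two bounds from $m=e(H)$ shows that the number of edges between $R$ and $V(H)\setminus R$ is at least
\[
m-\binom{r}{2}-ex(n-r,F_2)=m';
\]
equivalently $\sum_{v\in V(H)\setminus R} d_R(v)\ge m'$.

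Next I would count in two ways the pairs $(T,v)$ with $T$ a $t$-subset of $R$, $v\in V(H)\setminus R$, and $T\subseteq N_R(v)$. Counting by $v$ gives $\sum_{v} \binom{d_R(v)}{t}$, since each $v$ outside $R$ is counted once for every $t$-subset of its $d_R(v)$ neighbors in $R$; counting by $T$ shows this same quantity equals the sum over all $\binom{r}{t}$ subsets $T$ of the number of common neighbors of $T$ in $V(H)\setminus R$. By averaging, some $t$-subset $T\subseteq R$ therefore has at least $\binom{r}{t}^{-1}\sum_{v}\binom{d_R(v)}{t}$ common neighbors outside $R$, so it remains to bound this average from below. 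For this I would use the elementary inequality $\binom{d}{t}\ge (d-t+1)/(r-t+1)$, valid for $t\le d\le r$ because $\binom{d}{t}\ge 1$ while $(d-t+1)/(r-t+1)\le 1$, obtaining
\[
\sum_{v}\binom{d_R(v)}{t}\ \ge\ \frac{1}{r-t+1}\sum_{v}\bigl(d_R(v)-(t-1)\bigr)\ \ge\ \frac{m'-(n-r)(t-1)}{r-t+1}.
\]
Dividing by $\binom{r}{t}$ bounds the average common neighborhood from below by the claimed quantity, so the chosen $t$ vertices of the copy of $F_1$ have the required shared neighborhood.

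The argument is essentially routine; the only delicate point is the bookkeeping in the last display. One must note that applying $\binom{d}{t}\ge (d-t+1)/(r-t+1)$ only to the vertices with $d_R(v)\ge t$ (the rest contributing $\binom{d_R(v)}{t}=0$) still dominates the full sum $\sum_v\bigl(d_R(v)-(t-1)\bigr)$, since the discarded terms are nonpositive; the full crossing-edge count is at least $m'$, and the deficit $(t-1)(n-r)$ from subtracting $t-1$ per outside vertex is exactly the term $(n-r)(t-1)$ appearing in the bound. Beyond this careful accounting I do not expect a genuine obstacle.
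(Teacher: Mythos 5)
Your proof is correct, and since the paper only quotes this lemma from \cite{bushaw2011turan} without reproducing a proof, the relevant comparison is with the original argument there, which runs exactly as yours does: deleting the fixed copy of $F_1$ leaves an $F_2$-free graph, so at least $m'$ edges cross between $V(F_1)$ and the rest, and a pigeonhole/averaging over the $\binom{r}{t}$ $t$-subsets of $V(F_1)$ yields the stated common neighborhood. Your variant of the final step --- double counting the pairs $(T,v)$ and using the crude bound $\binom{d}{t}\ge (d-t+1)/(r-t+1)$ for $t\le d\le r$, instead of first counting the outside vertices with $d_R(v)\ge t$ and assigning each one a single $t$-subset --- is only a cosmetic difference, and your bookkeeping (discarding the nonpositive terms $d_R(v)-(t-1)$ for low-degree vertices) is handled correctly.
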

 Since $G$ is $kP_\ell$-free and $k\ge 2$, $P_\ell\cup (k-1)P_\ell \not\subseteq G$. According to Lemma  \ref{lem: t vertices with many common neighbors}, for every copy of $P_\ell$ in $G$, there are $\lf\frac{\ell}{2}\rf$ vertices in the $P_\ell$ with shared neighborhood of size at least
\begin{equation*}
    \begin{aligned}
        n'=& \frac{(k\lchuto-1)(n-k\lchuto+1) -ex(n-\ell,(k-1) P_\ell)-{\ell \choose 2}-(n-\ell)(\lchuto -1)}{(\lchutoo+1){\ell \choose \lchuto}}\\
        \geq & \frac{n/3-{\ell\choose 2}}{(\lchutoo+1){\ell \choose \lchuto}}+k\ell.
    \end{aligned}
\end{equation*}
The  inequality holds by Theorems \ref{G},  \ref{thm: ex(n,kPl)} and  $n\geq  4k^2\ell^2(k\ell+r)(\lchuto+1){\ell\choose \lchuto} $.
Then  there is $S\subseteq V(G)$ such that $|S|=\lchuto$ and $|N(S)|\ge n_k$, where $n_k=\frac{n/3-{\ell\choose 2}}{(\lchutoo+1){\ell \choose \lchuto}}+k\ell>k\ell$.

We now construct an auxiliary $\lchuto$-uniform hypergraph $\mathscr{H}$ as follows: $$E(\mathscr{H})=\left\{S\subseteq V(G):|S|=\lchuto,|N(S)|\ge n_k\right\}$$ and $V(\mathscr{H})=\cup_{S\in E(\mathscr{H})}S$. For short, let $V(\mathscr{H})=A$. Then for any copy of $P_\ell$ in $G$, there is $S\in E(\mathscr{H})$ such that $S\subseteq V(P_\ell)$. On the other hand, for any $S\in E(\mathscr{H})$, there is a copy of $P_\ell$ in $G$ such that $S\subseteq V(P_\ell)$. Since $n_k>k\ell$ and $G$ is $kP_\ell$-free,  $\nu(\mathscr{H})\le k-1$. We construct a graph $G_{\mathscr{H}}$ associated with $\mathscr{H}$, where $V(G_{\mathscr{H}})=A$ and for any $u,v\in V(G_{\mathscr{H}})$, $uv\in E(G_{\mathscr{H}})$ if and only if there is $S\in E(\mathscr{H})$ such that $u,v\in S$. By the construction of $G_{\mathscr{H}}$, if there is a copy of $P_{\lchuto}$ in $G_{\mathscr{H}}$, then there is a copy of $P_\ell$ in $G$. So $G_{\mathscr{H}}$ is $kP_{\lchuto}$-free by $G$ being $kP_\ell$-free.

Let $C\subseteq V(\mathscr{H})$, we denote $\mathscr{H}[C]$ as the sub-hypergraph of $\mathscr{H}$  with vertex set $C$ and  hyperedges  $\{E\in E(\mathscr{H}): E\subseteq C\}$. Then we have the following lemma.
 \begin{lemma}\label{lem: delete t vertices always have k-1 edges}
    For every $X\subseteq A$ with $|X|=t< \lchuto$, $\nu(\mathscr{H}[A\setminus X])\ge k-1$.
\end{lemma}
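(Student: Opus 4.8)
The plan is to argue by contradiction: I will convert a small matching of $\mathscr{H}[A\setminus X]$ together with $X$ into a small transversal of $\mathscr{H}$, then show that deleting this transversal from $G$ leaves a $P_\ell$-free graph whose edge count is too small to be compatible with the lower bound \eqref{eq-0}.

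First I would record the transversal observation. Suppose for contradiction that $\nu(\mathscr{H}[A\setminus X])\le k-2$, and let $M$ be a maximum (hence maximal) matching of $\mathscr{H}[A\setminus X]$ with vertex set $T'=\bigcup_{S\in M}S\subseteq A\setminus X$. Since $\mathscr{H}$ is $\lchuto$-uniform, $|T'|\le (k-2)\lchuto$. I claim $T:=T'\cup X$ is a transversal of $\mathscr{H}$: every $E\in E(\mathscr{H})$ satisfies $E\subseteq A$, so if $E\cap X=\emptyset$ then $E$ is a hyperedge of $\mathscr{H}[A\setminus X]$ and must meet $T'$ by maximality of $M$; in either case $E$ meets $T$. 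Crucially, using the hypothesis $|X|=t<\lchuto$,
$$|T|\le (k-2)\lchuto+t<(k-1)\lchuto,$$
so $|T|\le (k-1)\lchuto-1$.

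Next I would invoke the defining property of $\mathscr{H}$, namely that every copy of $P_\ell$ in $G$ contains some hyperedge $S\in E(\mathscr{H})$. If $G-T$ contained a $P_\ell$, that path would contain a hyperedge disjoint from $T$, contradicting that $T$ is a transversal; hence $G-T$ is $P_\ell$-free. By the Erd\H{o}s--Gallai bound (Theorem \ref{G}), $e(G-T)\le \frac{\ell-2}{2}(n-|T|)\le \frac{\ell-2}{2}n$, and bounding the edges incident to $T$ by $|T|(n-1)$ yields
$$e(G)\le \frac{\ell-2}{2}n+|T|(n-1)\le\left(\frac{\ell-2}{2}+(k-1)\lchuto-1\right)n.$$
Splitting into the cases $\ell$ even and $\ell$ odd, where $\frac{\ell-2}{2}$ equals $\lchuto-1$ and $\lchuto-\frac12$ respectively, the coefficient of $n$ is at most $k\lchuto-\frac32$, strictly below the coefficient $k\lchuto-1$ appearing in \eqref{eq-0}. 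Since $n$ is large, this contradicts \eqref{eq-0} and proves the lemma (the case $X=\emptyset$, i.e. $t=0<\lchuto$, is included and in particular gives $\nu(\mathscr{H})\ge k-1$).

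The main obstacle is the tightness of the final count. The bound on $|T|$ must come out strictly below $(k-1)\lchuto$, which is exactly where the strict hypothesis $t<\lchuto$ enters, and then the Erd\H{o}s--Gallai contribution $\frac{\ell-2}{2}$ must be controlled through the parity analysis so that the combined leading coefficient stays strictly under $k\lchuto-1$. The structural parts (the transversal property and $P_\ell$-freeness after deletion) are robust; the delicate point is verifying that these two sources of edges together cannot reach the lower bound, which is precisely why both $t<\lchuto$ and the parity of $\frac{\ell-2}{2}$ are essential.
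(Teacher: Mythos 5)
Your proof is correct, but for $k\ge 3$ it takes a genuinely different route from the paper's. The paper deletes only the set $X$ itself: from $\nu(\mathscr{H}[A\setminus X])<k-1$ and the fact that every copy of $P_\ell$ in $G$ contains a hyperedge of $\mathscr{H}$, it concludes in one line that $G[V(G)\setminus X]$ is $(k-1)P_\ell$-free, then invokes the Bushaw--Kettle Tur\'an number of $(k-1)P_\ell$ (Theorem \ref{thm: ex(n,kPl)}) when $k\ge 3$, falling back on Erd\H{o}s--Gallai only when $k=2$; adding back the at most $t(n-t)+\binom{t}{2}$ edges incident to $X$ yields a leading coefficient of roughly $k\lchuto-2$, below the $k\lchuto-1$ of (\ref{eq-0}). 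You instead convert the matching deficiency into a transversal $T=T'\cup X$ with $|T|\le(k-1)\lchuto-1$, so that $G-T$ is $P_\ell$-free, and you need only Erd\H{o}s--Gallai; your coefficient $k\lchuto-\tfrac32$ is slightly weaker but still comfortably below $k\lchuto-1$, since the required inequality amounts to $n>2\left(k\lchuto-1\right)^2$, which the paper's hypothesis on $n$ satisfies with large margin. For $k=2$ your argument degenerates exactly to the paper's (there $T=X$). What your version buys is self-containedness: it avoids Theorem \ref{thm: ex(n,kPl)} entirely (and hence the implicit check that $n-t$ is large enough for that theorem to apply to $(k-1)P_\ell$) as well as the $k=2$ versus $k\ge3$ case split. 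What the paper's version buys is a marginally tighter count and a shorter deduction, since $(k-1)P_\ell$-freeness of $G-X$ follows immediately from the definition of $\mathscr{H}[A\setminus X]$. The delicate points in your write-up all check out: every hyperedge satisfies $E\subseteq A=V(\mathscr{H})$, so an edge missing $X$ lies in $\mathscr{H}[A\setminus X]$ and must meet $T'$ by maximality of $M$; the strict bound $|T|<(k-1)\lchuto$ comes precisely from $t<\lchuto$; and the parity bookkeeping for $\frac{\ell-2}{2}$ is handled correctly.
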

\begin{proof}
   By contradiction. Suppose there is $X\subseteq A$ with $|X|=t< \lchuto$ such that $\nu(\mathscr{H}[A\setminus X])<k-1$. Then $G[V(G)\setminus X]$ is  $(k-1)P_\ell$-free. When $k\geq 3$, by Theorem  \ref{thm: ex(n,kPl)}, we have $e(G[V(G)\setminus X])\leq (n-t-(k-1)\lchuto+1)((k-1)\lchuto-1)+{(k-1)\lchuto-1\choose 2}+c_\ell$. Then
    \begin{equation*}
    \begin{aligned}
                e(G)\leq& e(G[V(G)\setminus X])+ t(n-t)+{t\choose 2}\\
        <&\left(n-k\lchuto+1\right)\left(k\lchuto-1\right),
    \end{aligned}
    \end{equation*}
    a contradiction with (\ref{eq-0}).
    When $k=2$, by Theorem \ref{G}, we have $e(G[V(G)\setminus X])\leq \frac{\ell-2}{2}(n-t)$, then
    \begin{equation*}
    \begin{aligned}
            e(G)\leq& e(G[V(G)\setminus X])+ t(n-t)+{t\choose 2}\\
        <&\left(n-2\lchuto+1\right)\left(2\lchuto-1\right),
    \end{aligned}
    \end{equation*}
    a contradiction with (\ref{eq-0}).
\end{proof}

By Lemma \ref{lem: delete t vertices always have k-1 edges}, $\nu(\mathscr{H})= k-1$. Since $n_k>k\ell$ and $G$ is $kP_\ell$-free,  $\nu(\mathscr{H}[A\setminus S])\le k-2$ for any $S\in E(\mathscr{H})$ which implies $G[V(G)\setminus S]$ is $(k-1)P_\ell$-free.
Denote
\begin{equation*}
    \begin{aligned}
        B=&\left\{x\in V(G)\setminus A: d_{A}(x)\geq (k-1)\lchuto\right\},\\
        C=& \left\{x\in V(G)\setminus A: 0<d_{A}(x)< (k-1)\lchuto\right\},\\
        D=&\{x\in V(G)\setminus A: d_{A}(x)=0\}.\
    \end{aligned}
\end{equation*} Then $V(G)=A\cup B\cup C\cup D$. Since $\nu(\mathscr{H})= k-1$,
$D$ is $P_\ell$-free. Then we have $e(G[D])\leq \frac{l-2}{2}|D|$ by Lemma \ref{G}. By the definition of $\mathscr{H}$, we have the following result.
\begin{lemma}\label{lemma: edges inside BCD}
        (i). For any $u\in D$, $|N(u)\cap (B\cup C)|\le 1$ and $d_{G[B\cup C]}(u)\le 1$ for any $u\in B\cup C$.

    (ii). If $\ell$ is even, then $e(G[B])=0$. If $\ell$ is odd and $|A|<2(k-1)\lchuto$, then $e(G[B])\le 1$.
\end{lemma}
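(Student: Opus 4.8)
The plan is to prove both parts by a single contradiction scheme. In each case I assume the forbidden configuration (too many neighbours, or too many edges) and use it to produce a copy of $P_\ell$ in $G$ that meets $A$ in very few vertices; combined with the robustness of the hypergraph matching this forces a $kP_\ell$ in $G$, which is impossible. The engine is the following reduction, which I would record first. \emph{If $G$ contains a copy $Q$ of $P_\ell$ with $|V(Q)\cap A|\le\lchuto-1$, then $G\supseteq kP_\ell$.} Indeed, put $X=V(Q)\cap A$; as $|X|<\lchuto$, Lemma~\ref{lem: delete t vertices always have k-1 edges} yields $\nu(\mathscr{H}[A\setminus X])\ge k-1$, so there are pairwise disjoint hyperedges $S_1,\dots,S_{k-1}\subseteq A\setminus X$. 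Each $S_i$ has $|N(S_i)|\ge n_k>k\ell$, and from $S_i$ together with at most $\lchutoo$ of its common neighbours one builds a copy $Q_i$ of $P_\ell$ by weaving $s_1,w_1,s_2,w_2,\dots$. Choosing all these common neighbours greedily so as to avoid the boundedly many ($\le k\ell$) vertices already used — possible since every common neighbourhood has size exceeding $k\ell$ — makes $Q,Q_1,\dots,Q_{k-1}$ pairwise disjoint, giving the forbidden $kP_\ell$. So it suffices, in each case, to exhibit one $P_\ell$ that is economical on $A$.

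The basic gadget is a \emph{hyperedge chain}: for a hyperedge $S$ and $s_1,\dots,s_t\in S$ the walk $s_1-w_1-s_2-\cdots-s_t$ with distinct $w_i\in N(S)\setminus A$ is a path on $2t-1$ vertices meeting $A$ in exactly the $t$ vertices $s_1,\dots,s_t$ (there is always room to pick the $w_i$ outside $A$ because $n_k$ is huge). Since $A=V(\mathscr{H})$, every vertex of $A$ lies in a hyperedge, so any $A$-neighbour of a vertex of $B\cup C$ can be used as the endpoint $s_1$. For part (i), if a vertex $u$ (lying in $D$, or in $B\cup C$) had two distinct neighbours $x_1,x_2\in B\cup C$, I would place $u$ internally and extend one side through a chain anchored at an $A$-neighbour $s_1$ of $x_2$:
$$x_1-u-x_2-s_1-w_1-s_2-\cdots.$$
A direct count shows this $P_\ell$ meets $A$ in $\lceil(\ell-3)/2\rceil=\lchuto-1$ vertices for either parity, so the reduction applies and $u$ can have at most one such neighbour. (With a single neighbour $u$ must be an endpoint, which for odd $\ell$ costs $\lchuto$ vertices of $A$; this is exactly why the bound is $\le1$ rather than $0$.)

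For part (ii) I first note that by part (i) the graph $G[B\cup C]$ has maximum degree at most $1$, so $G[B]$ is a matching. If $\ell$ is even and $x_1x_2\in E(G[B])$, then $x_1-x_2-s_1-w_1-\cdots$ (with $s_1\in N_A(x_2)$) is a $P_\ell$ meeting $A$ in $\lceil(\ell-2)/2\rceil=\lchuto-1$ vertices, forcing $kP_\ell$; hence $e(G[B])=0$. If $\ell$ is odd, a single $B$-edge only yields a $P_\ell$ meeting $A$ in $\lchuto$ vertices, which is too many, so I assume two independent $B$-edges $x_1x_2$ and $y_1y_2$ and splice both into one path while saving two $A$-vertices. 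This is where $|A|<2(k-1)\lchuto$ is used: since $x_2,y_2\in B$ satisfy $|N_A(x_2)|,|N_A(y_2)|\ge(k-1)\lchuto$, they have a common $A$-neighbour $a^*$, and I route
$$w_0-s'_1-w_1-\cdots-s'-x_1-x_2-a^*-y_2-y_1,$$
using $a^*$ as a single bridge for both edges and a hyperedge chain ending at an $A$-neighbour $s'$ of $x_1$ as padding. The $A$-vertices are the $\lchuto-2$ chain vertices together with $a^*$, i.e. $\lchuto-1$ in total, so the reduction fires and $e(G[B])\le1$.

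The step I expect to be delicate is the vertex accounting that keeps each constructed $P_\ell$ meeting $A$ in at most $\lchuto-1$ vertices while staying a genuine path: this is what forces the chain's common neighbours to be taken outside $A$ and pairwise distinct, and, in the odd case of part (ii), what forces the two $B$-edges to be merged through a single shared $A$-vertex $a^*$ — precisely the point secured by $|A|<2(k-1)\lchuto$. Guaranteeing simultaneously that all auxiliary vertices (the chains' common neighbours, the $k-1$ padding paths' common neighbours, and the fixed vertices $u,x_i,y_i,a^*$) are distinct is then routine, since only $O(k\ell)$ vertices are ever used against common neighbourhoods of size at least $n_k$.
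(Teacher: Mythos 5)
Your proposal is correct and follows essentially the same route as the paper: the same reduction (an economical copy of $P_\ell$ plus Lemma \ref{lem: delete t vertices always have k-1 edges} and greedy completion from the large common neighbourhoods yields $kP_\ell$), the same hyperedge-chain constructions for part (i) and the even case of (ii), and the same pigeonhole use of $|A|<2(k-1)\lchuto$ to merge the two $B$-edges through a single common $A$-neighbour in the odd case — indeed your mid-path merge $x_1x_2a^*y_2y_1$ spends only $\lchuto-1$ vertices of $A$, one fewer than the paper's routing $w_1yu_1u_2x_1\cdots x_{\ell-4}$, which makes the application of Lemma \ref{lem: delete t vertices always have k-1 edges} cleaner. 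The one soft spot is your parenthetical claim that the chain's connectors can always be taken outside $A$: at this stage of the argument $|A|$ is not yet bounded (Lemma \ref{lem: the size of A} comes later and itself uses this lemma), so $n_k$ being large does not by itself guarantee room outside $A$; this is repaired, exactly as the paper implicitly does, by taking $X$ to be only the at most $\lchuto-1$ spine-plus-merge vertices, extracting the $k-1$ disjoint hyperedges $S_1,\dots,S_{k-1}\subseteq A\setminus X$ first, and only then choosing all connectors greedily from the relevant common neighbourhoods so as to avoid $\bigcup_{i}S_i$ and the $O(k\ell)$ vertices already used, which is possible since $n_k>k\ell\ge(k-1)\lchuto+\ell$.
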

\begin{proof} ($i$) Suppose there is $u\in D$ such that $|N(u)\cap (B\cup C)|\ge 2$. Set $v,w\in N(u)\cap (B\cup C)$. Then there is $S\in E(\mathscr{H})$ such that $N(v)\cap S\not=\emptyset$. By the definition of $\mathscr{H}$ and $n_k>k\ell$, there is a $P_\ell=vx_1\ldots x_{\ell-1}$ such that $x_1,x_3,\ldots,x_{\ell-2}\in S$ (resp. $x_1,x_3,\ldots,x_{\ell-1}\in S$) when $\ell$ is odd (resp. $\ell$ is even) and $u,w\notin V(P_\ell)$. Now we have a $P'_\ell=wuvx_1\ldots x_{\ell-3}$. Let $X=\{x_1,x_3,\ldots,x_{\ell-2}\}$ (resp. $X=\{x_1,x_3,\ldots,x_{\ell-1}\}$) when $\ell$ is odd (resp. $\ell$ is even).  By Lemma   \ref{lem: delete t vertices always have k-1 edges}, $\nu(\mathscr{H}[A\setminus X])\ge k-1$. Since $n_k>k\ell$, there are $k-1$ disjoint copies of $P_{\ell}$ in $G[V\setminus V(P'_\ell)]$ which imply there are $k$ disjoint copies of $P_{\ell}$ in $G$, a contradiction. By the same argument, we have  $d_{G[B\cup C]}(u)\le 1$ for any $u\in B\cup C$.

 ($ii$) By the same argument as ($i$), we have $e(G[B])=0$ if $\ell$ is even.
Now we consider the case that $\ell$ is odd and $|A|<2(k-1)\lchuto$. Suppose $e(G[B])\ge 2$. Let  $u_1u_2,w_1w_2\in E(G[B])$ and $u_1\not=w_1$.
Since $u_1,w_1\in B$ and  $|A|<2(k-1)\lchuto$, there is $y\in A\cap N(u_1)\cap N(w_1)$. By Lemma  \ref{lem: delete t vertices always have k-1 edges}, $\nu(\mathscr{H}[A\setminus \{y\}])\ge k-1$.
Since $u_2\in B$ and  $|A|<2(k-1)\lchuto$, there is $S\in E(\mathscr{H}[A\setminus \{y\}])$ such that $N(u_2)\cap S\not=\emptyset$. By the definition of $\mathscr{H}$ and $n_k>k\ell$, there is a $P_\ell=u_2x_1\ldots x_{\ell-1}$ such that $x_1,x_3,\ldots,x_{\ell-2}\in S$. Now we have a $P'_\ell=w_1yu_1u_2x_1\ldots x_{\ell-4}$. By the same argument as ($i$), we can derive a contradiction.
\end{proof}

We will prove that the size of $A$ is bounded by $k\lchuto-1$. This has been proved in the case of $k=2$ in \cite{bushaw2011turan},
\begin{lemma}[\cite{bushaw2011turan}]\label{lem: the size of A when k=2}
   When $\ell\geq 4$ and $n\geq 2\ell+4\ell(\lc\frac{\ell}{2}\rc+1){\ell\choose \lchuto}$, let $G$ be a graph with $n$ vertices and without two disjoint $P_\ell$, and $e(G)\geq (2\lchuto-\frac{1}{2})n$,  then $|A|\leq 2\lchuto-1$.
\end{lemma}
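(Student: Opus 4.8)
The plan is to work with the auxiliary hypergraph $\mathscr{H}$ and its associated graph $G_{\mathscr{H}}$ introduced above, specialised to $k=2$. Write $m=\lchuto$ and recall three facts already available: $\mathscr{H}$ is intersecting (that is $\nu(\mathscr{H})=1$), $G_{\mathscr{H}}$ is $2P_m$-free, and every hyperedge $S$ satisfies $|N(S)|\ge n_2$ with $n_2>2\ell$. The whole argument would go by contradiction: assume $|A|\ge 2m$ and produce two vertex-disjoint copies of $P_\ell$ in $G$. The engine is the lifting principle already recorded, namely that any $P_m$ in $G_{\mathscr{H}}$ expands to a $P_\ell$ in $G$ by inserting $\lchutoo$ connectors drawn from the (size $\ge n_2$) common neighbourhoods of the hyperedges witnessing its edges; since $n_2>2\ell$, two \emph{disjoint} copies of $P_m$ in $G_{\mathscr{H}}$ lift greedily to two disjoint $P_\ell$ in $G$ (each connector is chosen avoiding the fewer than $2\ell$ already-placed vertices). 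So it suffices to locate $2P_m$ in $G_{\mathscr{H}}$.

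The first step is to pin down the structure forced by the edge hypothesis $e(G)\ge(2m-\frac12)n$. For $n$ large this comfortably implies inequality (\ref{eq-0}), so Lemma \ref{lem: delete t vertices always have k-1 edges} applies: for every $X\subseteq A$ with $|X|<m$ the hypergraph $\mathscr{H}[A\setminus X]$ still contains a hyperedge. Equivalently, no set of fewer than $m$ vertices meets all hyperedges; together with the fact that a single hyperedge is itself a transversal of size $m$, this says $\tau(\mathscr{H})=m$. This is precisely what excludes the degenerate ``star'' configurations (all hyperedges through one common vertex), which are $2P_m$-free yet can have arbitrarily large support, and it is the only place the edge count is used. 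Now fix one hyperedge $S=\{s_1,\dots,s_m\}$ and apply the above to $X=S\setminus\{s_i\}$: for each $i$ there is a hyperedge $B_i$ meeting $S$ in exactly $\{s_i\}$. Writing $C_i=B_i\setminus S$, these are $(m-1)$-subsets of $A\setminus S$, and they are pairwise intersecting, since $B_i\cap B_j\subseteq A\setminus S$ and two disjoint $B_i,B_j$ would violate $\nu(\mathscr{H})=1$.

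With $|A|\ge 2m$ one then assembles the two disjoint copies of $P_m$. The clean case is the following. Since $|S\cup B_1|=2m-1<|A|$, there is a surplus vertex $w\in A\setminus(S\cup B_1)$, lying in some hyperedge $S_w$. If $S_w$ meets $S$ in any $s_j$ with $j\ge 2$, then $ws_j\in E(G_{\mathscr{H}})$, so the clique $B_1=\{s_1\}\cup C_1$ gives a $P_m$ on $\{s_1\}\cup C_1$, while the clique $\{s_2,\dots,s_m\}$ with $w$ attached through $s_j$ gives a second $P_m$ on $\{w,s_2,\dots,s_m\}$; these two are vertex-disjoint, yielding $2P_m$ in $G_{\mathscr{H}}$ and hence the contradiction.

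The main obstacle is the complementary case, in which every hyperedge through a surplus vertex meets $S$ only in the single pivot $s_1$. Here one must argue more delicately, cycling through the other witnesses $B_2,\dots,B_m$ and exploiting the pairwise intersections of the traces $C_i$ together with $\tau(\mathscr{H})=m$. The idea is that the pivot cannot persist across all choices of the isolated coordinate $s_i$: if it did, one could cover every hyperedge using $s_1$ together with fewer than $m-1$ additional vertices, contradicting $\tau(\mathscr{H})=m$; and as soon as some alternative choice of $B_i$ admits a surplus vertex attached off the pivot, that choice falls into the clean case above and again produces $2P_m$. Making this path surgery consistent with the transversal bound across all the $B_i$ is the technical heart of the statement, and is the part carried out in \cite{bushaw2011turan}. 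Once $2P_m\subseteq G_{\mathscr{H}}$ is secured, the lifting principle delivers $2P_\ell\subseteq G$, contradicting that $G$ contains no two disjoint $P_\ell$; therefore $|A|\le 2m-1$.
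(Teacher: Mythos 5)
First, a point of comparison you could not have known: the paper does not prove this lemma at all --- it is imported verbatim from \cite{bushaw2011turan} and used only as the base case $k=2$ in the induction proving Lemma \ref{lem: the size of A}. So there is no internal proof to match; your attempt has to be judged as a standalone argument. Its skeleton is right: specialising $\mathscr{H}$ to $k=2$ gives $\nu(\mathscr{H})=1$; the hypothesis $e(G)\geq(2\lchuto-\frac12)n$ dominates the bound (\ref{eq-0}), so Lemma \ref{lem: delete t vertices always have k-1 edges} applies and yields, for each $s_i$ in a fixed hyperedge $S=\{s_1,\dots,s_m\}$ (write $m=\lchuto$), a hyperedge $B_i$ with $B_i\cap S=\{s_i\}$; and the greedy lifting of $2P_m\subseteq G_{\mathscr{H}}$ to $2P_\ell\subseteq G$ via the common neighbourhoods of size $n_2>2\ell$ is sound and consistent with the paper's own remark that $G_{\mathscr{H}}$ is $kP_{\lchuto}$-free. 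Your ``clean case'' is also correct.

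The genuine gap is the complementary case, which you do not prove: you explicitly defer it to \cite{bushaw2011turan} (``the part carried out in...''), and the heuristic you offer in its place is unsound --- if every hyperedge through every surplus vertex meets $S$ exactly in $\{s_1\}$, that says nothing about hyperedges supported inside $S\cup B_1\cup C_2\cup\cdots$, so one cannot conclude that $s_1$ together with fewer than $m-1$ further vertices covers $E(\mathscr{H})$, and the advertised contradiction with $\tau(\mathscr{H})=m$ never materialises. A blind proof whose technical heart is outsourced to the reference being reproved is not a proof. What makes this a pity is that the case closes in two lines with witnesses you already built: fix one surplus vertex $w\in A\setminus(S\cup B_1)$ and suppose every hyperedge through $w$ meets $S$ exactly in $\{s_1\}$. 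Then $w\notin B_2$, since otherwise $B_2$ would be a hyperedge through $w$ with $B_2\cap S=\{s_2\}\not\ni s_1$; and $ws_1\in E(G_{\mathscr{H}})$, since $w\in V(\mathscr{H})$ lies in some hyperedge, which must contain $s_1$. Now the clique $B_2$ spans one copy of $P_m$, and $w,s_1,s_3,s_4,\dots,s_m$ spans a second on $\{w\}\cup(S\setminus\{s_2\})$, disjoint from the first because $B_2\cap S=\{s_2\}$ and $w\notin B_2$; lifting gives $2P_\ell\subseteq G$, the desired contradiction. So no ``path surgery across all the $B_i$'' is needed --- only $B_1$ and $B_2$ --- and with this substitution your outline becomes a complete, self-contained proof of the cited lemma.
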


\begin{lemma}\label{lem: the size of A} Let $\ell\ge 4$, $k\ge 2$ and $n\geq  4k^2\ell^2(k\ell+r)(\lchuto+1){\ell\choose \lchuto} $. If $G$ is a $kP_\ell$-free graph with $n$ vertices, and $e(G)\geq (k\lchuto-\frac{1}{2})n$,
  then $|A|\le k\lchuto -1$.
\end{lemma}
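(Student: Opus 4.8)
The plan is to induct on $k$, taking Lemma \ref{lem: the size of A when k=2} as the base case $k=2$. For the inductive step I assume the statement for $k-1$ and suppose, for contradiction, that $|A|\ge k\lchuto$. Running the construction of $\mathscr{H}$ for this $G$ (the hypothesis $e(G)\ge(k\lchuto-\tfrac12)n$ implies the bound $(k\lchuto-1)(n-k\lchuto+1)$ used there, since their difference is $\tfrac12 n+(k\lchuto-1)^2>0$), Lemma \ref{lem: delete t vertices always have k-1 edges} gives $\nu(\mathscr{H})=k-1$. I fix a maximum matching $S_1,\dots,S_{k-1}$ and pass to $G'=G[V(G)\setminus S_1]$, a graph on $n-\lchuto$ vertices. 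The idea is to apply the induction hypothesis to $G'$, bounding its own ``$A$-set'' $A'$, and then transfer that bound back to $A$.

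First I would verify that $G'$ meets the hypotheses for the parameter $k-1$. It is $(k-1)P_\ell$-free: if $Q_1,\dots,Q_{k-1}$ were disjoint copies of $P_\ell$ in $G'$, then since $S_1\in E(\mathscr{H})$ has $|N_G(S_1)|\ge n_k>k\ell$, one may greedily extend $S_1$ to a further $P_\ell$ using $\lchutoo$ of its common neighbours avoiding $\bigcup_i V(Q_i)$ (at most $(k-1)\ell$ vertices), producing $kP_\ell\subseteq G$, a contradiction. Deleting the $\lchuto$ vertices of $S_1$ removes at most $\lchuto(n-1)$ edges, so $e(G')\ge(k\lchuto-\tfrac12)n-\lchuto(n-1)=((k-1)\lchuto-\tfrac12)n+\lchuto\ge((k-1)\lchuto-\tfrac12)(n-\lchuto)$, and $n-\lchuto$ still exceeds the threshold for $k-1$. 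Thus the induction hypothesis yields $|A'|\le(k-1)\lchuto-1$. The easy half of the transfer is then clean: any hyperedge of $\mathscr{H}$ disjoint from $S_1$ loses at most $\lchuto<\ell$ common neighbours upon passing to $G'$, so it stays above the (slightly smaller) threshold defining $\mathscr{H}'$; hence every vertex of $A$ lying in some hyperedge that avoids $S_1$ belongs to $A'$, that is, $V(\mathscr{H}[A\setminus S_1])\subseteq A'$.

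The main obstacle is the remaining covering step: I must rule out \emph{orphaned} vertices, namely $v\in A\setminus S_1$ all of whose witnessing $\lchuto$-sets meet $S_1$. Only once these are excluded does $A\setminus S_1\subseteq A'$, giving $|A|\le|S_1|+|A'|\le k\lchuto-1$, the desired contradiction. Orphans can genuinely occur for degenerate (star-like) hypergraphs, and excluding them is exactly where the edge count must be spent. My plan is to use Lemma \ref{lem: delete t vertices always have k-1 edges} together with its graded strengthening: for $1\le j\le k-1$ and every $Z$ with $|Z|\le j\lchuto-1$ one has $\nu(\mathscr{H}[A\setminus Z])\ge k-j$. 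This is proved by the same estimate as Lemma \ref{lem: delete t vertices always have k-1 edges} — were it false, $G[V(G)\setminus Z]$ would be $(k-j)P_\ell$-free, and Theorems \ref{G} and \ref{thm: ex(n,kPl)} would force $e(G)<(k\lchuto-\tfrac12)n$ for $n$ large. This robustness forbids any vertex of $A$ from ``depending'' on the single hyperedge $S_1$; the orphans are then removed by an exchange argument on maximum matchings of $\mathscr{H}$, replacing $S_1$ by a hyperedge through a putative orphan $v$ to obtain another maximum matching whose cover omits $v$, and iterating this rotation until $v$ sits in a hyperedge avoiding $S_1$. Carrying out this exchange argument cleanly — and in particular controlling which vertices it moves onto $S_1$ — is the step I expect to be most delicate.
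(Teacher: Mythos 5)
Your overall architecture matches the paper's: induction on $k$ with Lemma \ref{lem: the size of A when k=2} as base case, passage to $G'=G[V(G)\setminus S]$ for a hyperedge $S$, verification that $G'$ is $(k-1)P_\ell$-free with $e(G')\geq ((k-1)\lchuto-\frac{1}{2})(n-\lchuto)$, and the observation that $\mathscr{H}[A\setminus S]$ embeds in the auxiliary hypergraph $\mathscr{H}'$ of $G'$ (since $|N_{G'}(S')|\geq n_k-\lchuto\geq n_{k-1}$). This is precisely the paper's Claim 1, and your identification of the \emph{orphaned} vertices --- those $v\in A\setminus S$ all of whose witnessing $\lchuto$-sets meet $S$, which in the paper's notation make up $\overline{A}=A\setminus(\cup_{i=0}^{k-1}S_i)$ --- as the real difficulty is exactly right. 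Your graded strengthening of Lemma \ref{lem: delete t vertices always have k-1 edges} is also true by the same edge count.

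However, the plan you offer for the orphan-exclusion step has a genuine gap, and indeed its core mechanism provably fails. You propose to rotate the maximum matching by ``replacing $S_1$ by a hyperedge through a putative orphan $v$ to obtain another maximum matching whose cover omits $v$.'' But the paper's Claim 2 shows that every hyperedge $S_y$ through an uncovered vertex $y\in \overline{A}$ must intersect \emph{every} other matching edge $S_2,\dots,S_{k-1}$ (and avoid $S^x$): otherwise one counts $|V(\mathscr{H}[A\setminus S_{k-1}])|\geq (k-1)\lchuto$, contradicting Claim 1. So no hyperedge through an orphan can ever be combined with $S_2,\dots,S_{k-1}$ into a matching, and your rotation never gets started; the graded matching robustness does not help, because it guarantees matchings exist somewhere in $A\setminus Z$, not matchings through the specific vertex $v$. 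The paper closes the gap quite differently: it spends the edge hypothesis in a heavy counting argument (Claim 3, bounding the set $U$ of vertices with $d_A(u)\geq k\lchuto-1$ by $2n_k\binom{2\lchuto-2}{\lchuto}$ and deriving $e(G)<(n-k\lchuto+1)(k\lchuto-1)$ if the claim fails), then pins down a single vertex $z'\in S_2$ lying in $S_y\cap S_2$ for \emph{every} $y\in\overline{A}$ (Claims 4 and 5, via two disjoint copies of $P_{\lchuto}$ inside $G_{\mathscr{H}}[S_0\cup S_1\cup\{z'\}]$), and only then performs a rematching through $y$ using Lemma \ref{lem: delete t vertices always have k-1 edges} applied to $S_y\setminus\{y\}$: the new maximum matching $H_1,\dots,H_{k-1}$ must contain $y$ in some $H_1$ avoiding $z'$, contradicting Claim 5. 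None of this structural machinery (nor any substitute for it) appears in your sketch, so the step you yourself flag as delicate is not merely unfinished --- the exchange argument as described cannot be carried out, and an essentially new idea, of the kind supplied by Claims 2--5, is required.
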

\begin{proof}
    When $k=2$, the result holds by Lemma \ref{lem: the size of A when k=2}.
   Let $k\ge 3$. Suppose  $|A|\ge k\lchuto$. Then $|E(\mathscr{H})|\ge k$. Let $S_0\in E(\mathscr{H})$ and $x\in S_0$. Set $S'=S_0\setminus\{x\}$. By Lemma \ref{lem: delete t vertices always have k-1 edges}, $\nu(\mathscr{H}[A\setminus  S'])\ge k-1$. Let $S_1,\ldots,S_{k-1}$ be the independent edges in $\mathscr{H}[A\setminus  S']$. Since $\nu(\mathscr{H})= k-1$, we have that $\{S_1,\ldots,S_{k-1}\}$ is the maximum matching in $\mathscr{H}$ and then we can assume $S_0\cap S_1=\{x\}$. Denote $S^x=(S_0\cup S_1)\setminus\{x\}$ and $\overline{A}=A\setminus (\cup_{i=0}^{k-1}S_i)$. Since $|A|\ge k\lchuto$, $\overline{A}\not=\emptyset$. For any $y\in \overline{A}$, denote $S_y\in E(\mathscr{H})$ with $y\in S_y$.

  \noindent{\bf Claim 1.} For any $S\in E(\mathscr{H})$, $|V(\mathscr{H}[A\setminus S])|\leq (k-1)\lchuto-1$.

\noindent{\bf Proof of Claim 1.} Suppose there is $S\in E(\mathscr{H})$ such that $|V(\mathscr{H}[A\setminus S])|\ge  (k-1)\lchuto$.  Let $G'=G[V(G)\setminus S]$. Since $G$ is $kP_\ell$-free and $|N(S)|\ge n_k>kl$, we have $G'$ is $(k-1)P_\ell$-free, and
\begin{equation*}
    \begin{aligned}
        e(G')\geq & e(G)-e(S,V(G)\setminus S)-e(G[S])\\
        \geq & \left(k\lchuto-\frac{1}{2}\right)n-\lchuto\left(n-\lchuto\right)-{\lchuto\choose 2}\\
        >&\left((k-1)\lchuto-\frac{1}{2}\right)\left(n-\lchuto\right).
    \end{aligned}
\end{equation*}
 Now we construct an auxiliary $\lchuto$-uniform hypergraph $\mathscr{H'}$  as follows: $E(\mathscr{H'})=\{S\subseteq V(G')||S|=\lchuto,|N_{G'}(S)|\ge n_{k-1}\}$ and $V(\mathscr{H'})=\cup_{S\in E(\mathscr{H'})}S$. Note that $|N_{G'}(S')|\ge n_k-\lchuto\geq n_{k-1}$ for any $S'\in E(\mathscr{H}[A\setminus S])$. Then $\mathscr{H}[A\setminus S]$ is a sub-hypergraph of $\mathscr{H'}$. Note that the order of $G'$ is
\begin{equation*}
    \begin{aligned}
n-\lchuto&\geq  4k^2\ell^2(k\ell+r)\left(\lchuto+1\right){\ell\choose \lchuto} -\lchuto\\
&\geq  4(k-1)^2\ell^2((k-1)\ell+r)\left(\lchuto+1\right){\ell\choose \lchuto}.
\end{aligned}
\end{equation*}
        By the induction hypothesis, $|V(\mathscr{H}[A\setminus S])|\le |V(\mathscr{H'})|\leq (k-1)\lchuto-1$, a contradiction.\q

\vskip.2cm

\noindent  {\bf Claim 2.} For any $y\in \overline{A}$, $S_y\cap S_j\not=\emptyset$ for all $2\le j\le k-1$ and $S_y\cap S^x=\emptyset$.

\noindent  {\bf Proof of Claim 2.} Suppose there is $y\in \overline{A}$ and $2\le j\le k-1$, say $j=k-1$, such that $S_y\cap S_{k-1}=\emptyset$. Then $S_y,S_j\in E(\mathscr{H}[A\setminus S_{k-1}])$ for all $0\le j\le k-2$. Thus $|V(\mathscr{H}[A\setminus S_{k-1}])|\ge (k-1)\lchuto$, a contradiction with Claim 1.

Suppose there is $y\in \overline{A}$ such that $S_y\cap S^x\not=\emptyset$, say $z\in S_y\cap S_0$ and $z\not=x$. Then there are two disjoint copies of $P_{\lchuto}$ in $G_{\mathscr{H}}[S_0\cup S_1\cup \{y\}]$. Note that there are $k-2$ disjoint copies of $P_{\lchuto}$ in $G_{\mathscr{H}}[\cup_{i=2}^{k-1}S_i]$. So there are $k$ disjoint copies of $P_{\lchuto}$ in $G_{\mathscr{H}}$, a contradiction.
\q
\vskip.2cm



    \noindent  {\bf  Claim 3.}\label{claim: E1E2 is connected}
        If there is $\overline{S}\in E(\mathscr{H})$ such that $\overline{S}\subseteq S^x$, then there exists $S'\in E(\mathscr{H})$ with $S'\cap S^x\neq \emptyset$ and $S'\not\subseteq S_0\cup S_1$.

   \noindent  {\bf Proof of Claim 3.} By contradiction. Suppose for any $S'\in E(\mathscr{H})$ with $S'\cap S^x\neq \emptyset$, we have  $S'\subseteq S_0\cup S_1$.

         First, we assume there is $S_x\in E(\mathscr{H}[A\setminus S^x])$ such that $x\in S_x$. Then $V(\mathscr{H}[A\setminus S^x])=A\setminus S^x$. Denote $A_1=V(\mathscr{H}[A\setminus S^x])$. Since $G$ is $kP_\ell$-free and $\overline{S}\in E(\mathscr{H})\setminus E(\mathscr{H}[A\setminus S^x])$,
          $G[V(G)\setminus S^x]$ is $(k-1)P_\ell$-free. By the same argument as that proof of
         Claim 1 and note that  $n_k-2\lchuto\geq n_{k-1}$, we have $|A_1|\leq (k-1)\lchuto-1$.
         Then $|A|=|A_1|+|S^x |\leq (k+1)\lchuto-3$. Since  $|A|\ge k\lchuto$, we have
         $|A_1|\geq (k-2)\lchuto+2$.
       Let  $$U=\left\{x\in V(G)\setminus A ~:~ d_A(x)\geq k\lchuto-1 \right\}.$$ Then $U\subseteq B$. Since $|A_1|\leq (k-1)\lchuto-1$, $d_{S^x}(u)\ge \lchuto $ for any  $u\in U$. We claim that $|U|\leq 2n_k{2\lchuto-2\choose \lchuto}$.

        Suppose $|U|> 2n_k{2\lchuto-2\choose \lchuto}$. Since $d_{S^x}(u)\ge \lchuto $ for any  $u\in U$, there is $A^*\subseteq S^x$ such that $|A^*|=\lchuto-1$ and $|N(A^*)\cap U|\ge 2n_k$.
        Let $U^*=N(A^*)\cap U$. Since $|A_1|\leq (k-1)\lchuto-1$, $d_{A_1\setminus \{x\}}(u)\ge k\lchuto-1-|S^x\cup\{x\}|=(k-2)\lchuto$ for any $u\in U^*$.  Since $|U^*|\geq 2n_k$ and $2n_k(k-2)\lchuto>(n_k-1)((k-1)\lchuto-2)\geq (n_k-1)|A_1\setminus\{x\}|$ when $k\geq 3$,  there exist a vertex $a^*\in A_1\setminus\{x\}$ such that $d_{U^*}(a^*)\ge  n_k$. Thus $|N(A^*\cup\{a^*\})|\ge n_k$ which implies  $A^*\cup\{a^*\}\in E(\mathscr{H})$. But $(A^*\cup\{a^*\})\cap S^x\neq \emptyset$ and $(A^*\cup\{a^*\})\not\subseteq S_0\cup S_1$, a contradiction with our assumption. Hence $|U|\leq 2n_k{2\lchuto-2\choose \lchuto}$.

        Note that $|A|\leq (k+1)\lchuto-3<2(k-1)\lchuto$ when $k\geq 3$. By Lemma \ref{lemma: edges inside BCD} and the fact $e(G[D])\leq \frac{l-2}{2}|D|$, we have
        \begin{equation*}
            \begin{aligned}
               & e(G)= e(G[A])+e(A,U)+e(A,B\backslash U)+(e(G[C])+e(C,V(G)\setminus C))\\
                &+(e(G[D])+e(D,V(G)\setminus D))+e(G[B])
                \\\leq & {|A|\choose 2}+\left((k+1)\lchuto-3\right)|U|+\left(k\lchuto-2\right)(n-|A|-|C|-|D|-|U|)\\
                &+\left(1+(k-1)\lchuto-1\right)|C|+\left(1+\frac{\ell-2}{2}\right)|D|+1   \\
                \leq & {2(k-1)\lchuto\choose 2}+\left((k+1)\lchuto-3\right)2n_k{2\lchuto-2\choose \lchuto}\\
                &+\left(k\lchuto-2\right)\left(n-2n_k{2\lchuto-2\choose \lchuto}\right)+2
                     \end{aligned}
        \end{equation*}
        \begin{equation*}
            \begin{aligned}
                =&{2(k-1)\lchuto\choose 2}+2\left(\lchuto-1\right)n_k{2\lchuto-2\choose \lchuto}+\left(k\lchuto-2\right)n+2\\
                < &\left(n-k\lchuto+1\right)\left(k\lchuto-1\right),
            \end{aligned}
        \end{equation*}
        a contradiction with (\ref{eq-0}).
        The last inequality holds because
        \begin{equation*}
            \begin{aligned}
                2\left(\lchuto-1\right)n_k{2\lchuto-2\choose \lchuto}=&2\left(\lchuto-1\right)\left(\frac{n/3-{\ell\choose 2}}{(\lchutoo+1){\ell \choose \lchuto}}+k\ell\right){2\lchuto-2\choose \lchuto}\\
                \leq & \frac{2}{3}n+2k\ell(\lchuto-1){2\lchuto-2\choose \lchuto}<\frac{3}{4}n,
            \end{aligned}
        \end{equation*}
        when $n\geq 4k^2\ell^2(k\ell+r)(\lchuto+1){\ell\choose \lchuto}$ and $k\geq 3$.

       Now we assume for any $E\in E(\mathscr{H}[A\setminus S^x])$, we have $x\notin E$.
        Then we will replace $A_1$ with $A_1':=V(\mathscr{H}[A\setminus(S_0\cup S_1)])$. Thus $G[V(G)\setminus (S_0\cup S_1)]$ is $(k-1)P_\ell$-free and  $A\setminus(S_0\cup S_1)=A_1'$.  Then by a similar analysis as above, we have $|A_1'|\leq (k-1)\lchuto-1$ and $|A|\leq (k+1)\lchuto-2.$ We can also prove $e(G)< (k\lchuto-1)n$ similarly.\q
    \vskip.2cm
    \noindent{\bf Claim 4.} There exists  $E_0\in E(\mathscr{H})$ such that $E_0\cap S^x\neq \emptyset$ and $E_0\cap(\cup_{i=2}^{k-1}S_i)\neq \emptyset$.

\noindent{\bf Proof of Claim 4.} By Lemma  \ref{lem: delete t vertices always have k-1 edges},  $\nu(\mathscr{H}[A\setminus\{x\}])\ge k-1$. Since $\nu(\mathscr{H})=k-1$,  there exists $E_0\in E(\mathscr{H}[A\setminus\{x\})])\subseteq E(\mathscr{H})$ such that $E_0\cap S^x\neq \emptyset$. If $E_0\cap(\cup_{i=2}^{k-1}S_i)\neq \emptyset$, then we are done.

If $E_0\cap(\cup_{i=2}^{k-1}S_i)= \emptyset$, by Claim 2, we have $E_0\subseteq S^x$. By Claim 3,    there exists $S'\in E(\mathscr{H})$ with $S'\cap S^x\neq \emptyset$ and $S'\not\subseteq S_0\cup S_1$. Together with Claim 2, we have $S'\cap(\cup_{i=2}^{k-1}S_i)\neq \emptyset$ and we are done.\q

 \vskip.2cm

 By Claim 4, let $E_0\in E(\mathscr{H})$ such that $E_0\cap S^x\neq \emptyset$ and $E_0\cap(\cup_{i=2}^{k-1}S_i)\neq \emptyset$. Assume $E_0\cap (S_1\setminus\{x\})\neq \emptyset$ and $E_0\cap S_2\neq \emptyset$.
 Let  $z'\in E_0\cap S_2$  and $x_{z'}\in (E_0\cap S_1)\setminus\{x\}$.

 \vskip.2cm
  \noindent{\bf Claim 5.} For any $y\in \overline{A}$, $|S_y\cap S_2|=1$. Particularly, $z'\in S_y\cap S_2$.

  \noindent{\bf Proof of Claim 5.} By Claim 2, $S_y\cap S_2\not=\emptyset$. Suppose $y_1,y_2\in S_y\cap S_2$. Then we can assume that $z'\not=y_1$.  Then we can find two disjoint copies of $P_{\lchuto}$ in $G_{\mathscr{H}}[S_0\cup S_1\cup\{z'\}]$ and a copy of $P_{\lchuto}$ in $G_{\mathscr{H}}[ (S_2\setminus\{z'\})\cup\{y\}]$. Note that there are $k-3$ disjoint copies of $P_{\lchuto}$ in $G_{\mathscr{H}}[\cup_{i=3}^{k-1}S_i]$. So there are $k$ disjoint copies of $P_{\lchuto}$ in $G_{\mathscr{H}}$, a contradiction.\q

 Now we are going to complete the proof of Lemma \ref{lem: the size of A}. Let $y\in \overline{A}$.
Since $|S_y\setminus\{y\}|\leq \lchuto-1$, by Lemma \ref{lem: delete t vertices always have k-1 edges}, there exist $k-1$ disjoint hyperedges, denoted as $H_1,H_2,\dots, H_{k-1}$, in $\mathscr{H}[(A\setminus S_y)\cup \{y\}]$. Since $\nu(\mathscr{H})=k-1$, $\{H_1,H_2,\dots, H_{k-1}\}$ is a maximum matching in $\mathscr{H}$ which implies $y\in \cup_{i=1}^{k-1}H_i$, say $y\in H_1$; otherwise, together with $S_y$, we have $\nu(\mathscr{H})=k$, a contradiction. By Claim 2, $H_1\cap S_2\not=\emptyset$. By Claim 5, $z'\in S_y\setminus\{y\}$. Since $\{H_1,H_2,\dots, H_{k-1}\}$ is a maximum matching in $\mathscr{H}[(A\setminus S_y)\cup \{y\}]$, $z'\notin H_1$. So $z'\notin H_1\cap S_2$, a contradiction with Claim 5.
\end{proof}

\vskip.2cm
Recall that $G$ is the maximum $\{kP_\ell,F\}$-free graph, where $v(F)=r$. We choose $G$ such that $G$ has the minimum number of edges in $G[V(G)\setminus A]$ among all extremal graphs.
\begin{lemma}\label{lastlem}
(1) $|A|= k\lchuto -1$;
(2) $|B'|\ge k\ell+r$, where $B'=\{u\in B: d_A(u)=k\lchuto -1\}$; (3) $|C|=|D|=0$; (4) $N(u)\cap A=A$  for any $u\in B$ and $B=B'$.
\end{lemma}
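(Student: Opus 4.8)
The plan is to pin down the structure of the extremal graph $G$ in two stages: an edge-counting bound that locates $|A|$ and shows the ``defective'' parts $C$ and $D$ are tiny, followed by an extremality/minimality argument that removes them entirely. Throughout write $t=k\lchuto-1$, so by (\ref{eq-0}) the target reads $e(G)\ge t(n-t)$. For $(1)$, note first that $\nu(\mathscr{H})=k-1$ with $\lchuto$-uniform hyperedges forces $|A|\ge (k-1)\lchuto$, while Lemma \ref{lem: the size of A} gives $|A|\le t$. To bound $e(G)$ I split its edges into those inside $A$, those between $A$ and $B\cup C\cup D$, and those inside $B\cup C\cup D$. For the last group I use Lemma \ref{lemma: edges inside BCD}: every vertex of $B\cup C$ has at most one neighbour in $B\cup C$, every vertex of $D$ has at most one neighbour in $B\cup C$, and $e(G[D])\le\frac{\ell-2}{2}|D|$ since $D$ is $P_\ell$-free; moreover $e(G[B])\le 1$ because $|A|\le t<2(k-1)\lchuto$. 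Using $d_A(v)\le|A|$ for $v\in B$, $d_A(v)\le(k-1)\lchuto-1$ for $v\in C$, $d_A(v)=0$ for $v\in D$, and writing $|B|=n-|A|-|C|-|D|$, this yields an inequality of the shape
\[
 e(G)\le \Phi(|A|)+\big((k-1)\lchuto-|A|\big)|C|+\Big(\tfrac{\ell}{2}-|A|\Big)|D|,
\]
where $\Phi(a)=\binom{a}{2}+a(n-a)+1$ is increasing in $a$ on the relevant range.

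Since $\Phi(t)-\Phi(a)\ge (t-a)\big(n-O(1)\big)$, any deficit $|A|<t$ costs order $n$, which the bounded-coefficient $|C|,|D|$ terms cannot compensate; hence $|A|=t$. Substituting $|A|=t$ makes both remaining coefficients strictly negative (using $\lchuto\ge2$ and $k\ge2$), so comparison with $e(G)\ge t(n-t)$ forces $|C|$ and $|D|$ to be bounded by an absolute constant. Part $(2)$ then follows from the same count applied to the deficiency: with $|A|=t$ fixed, every vertex of $V(G)\setminus(A\cup B')$ contributes at least one fewer $A$-edge than the ideal value $t$, while the total gain from all internal edges and from $e(G[A])$ is $O(1)$; as $e(G)\ge t(n-t)$, at most a constant number of vertices can fail to lie in $B'$, so $|B'|\ge n-t-O(1)\ge k\ell+r$ for large $n$.

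For $(3)$ and $(4)$ I would invoke extremality together with the minimality of $e(G[V(G)\setminus A])$, via a rerouting move: given $v\in C\cup D\cup(B\setminus B')$, delete all edges at $v$ and join $v$ to every vertex of $A$, obtaining $G'$. The net change in edges is $t-d(v)$, so if $d(v)<t$ the move strictly increases $e(G)$ (contradicting extremality), and if $d(v)=t$ it preserves $e(G)$ while strictly decreasing the number of edges inside $V\setminus A$ (contradicting minimality), provided $v$ had a non-$A$ edge, which holds unless $v\in B'$. Low-degree vertices to apply this to are guaranteed whenever $C\cup D\neq\emptyset$: a $C$-vertex has bounded degree to $A$, and $e(G[D])\le\frac{\ell-2}{2}|D|$ means the average degree inside $D$ lies below $t$, so vertices of $C\cup D$ with $d(v)<t$ exist. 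Thus, once the move is shown to be safe, extremality and minimality force $C=D=\emptyset$ and every vertex of $B$ to be adjacent to all of $A$, i.e. $B=B'$ and $N(u)\cap A=A$ for all $u\in B$.

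The main obstacle is verifying that $G'$ remains $\{kP_\ell,F\}$-free. For $kP_\ell$-freeness the decisive point is $|A|=t=k\lchuto-1$: in a graph whose non-$A$ part is (almost) independent every copy of $P_\ell$ uses at least $\lchuto$ vertices of $A$, so $k$ disjoint copies would require $k\lchuto>t$ vertices of $A$, which is impossible, and the at most one edge inside $B'$ only reproduces the odd-$\ell$ correction already present in the lower-bound construction. For $F$-freeness I would use $\chi(F)\ge3$: after the move $G'$ is essentially $K_{t,n-t}$ together with the controlled edges inside $A$ and the single possible edge in $B'$, a near-bipartite graph into which $F$ cannot embed, for the same reasons the lower-bound graphs $H_1,H_2$ are $F$-free. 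Here the abundance of full-degree vertices supplied by $(2)$ is exactly what lets one absorb $v$ into the $K_{t,\cdot}$-structure without creating a new path or a copy of $F$. Establishing this safety (and dispatching the few small residual configurations left by the slack in the counting through the same minimality move) is where the real work lies; the counting in the first two paragraphs is comparatively routine.
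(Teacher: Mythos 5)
Your counting half is sound and close in spirit to the paper's. One genuine difference: for the lower bound in (1) the paper does not count at all — it applies Lemma~\ref{lem: delete t vertices always have k-1 edges} to an arbitrary $X\subseteq A$ with $|X|=\lchuto-1$, so that $|A\setminus X|\ge (k-1)\lchuto$ and hence $|A|\ge k\lchuto-1$ directly, whereas you push from $|A|\ge(k-1)\lchuto$ up to $k\lchuto-1$ by comparing $\Phi(a)$ with the bound (\ref{eq-0}). Both routes work (your coefficient of $|D|$ can be $+\tfrac12$ when $k=2$, $\ell$ odd and $a=\lchuto$, but the deficit of order $n$ still dominates $|D|/2$), and your deficiency count even yields $|B\setminus B'|=O(1)$, which is stronger than the paper's (2); the paper instead proves $|B'|\ge k\ell+r$ by assuming the contrary and contradicting (\ref{eq-0}), which is all that is needed downstream.

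The genuine gap is exactly where you place "the real work": the verification that the rerouted graph $G'$ is still $\{kP_\ell,F\}$-free, which you defer and for which your sketched justification would fail. The appeal to $\chi(F)\ge 3$ and near-bipartiteness is circular: at this stage $G[A]$ may contain up to $\binom{k\lchuto-1}{2}$ arbitrary edges, and $G'$ contains $G[A]$ joined completely to a huge independent-ish set, so $F$ embeds as soon as $G[A]$ contains a member of $\mathscr{G}_1(F)$ --- the bound $e(G[A])\le ex(k\lchuto-1,\mathscr{G}_1(F))$ is only derived \emph{after} Lemma~\ref{lastlem}, in the proof of Theorem~\ref{thm: ex(n,kpl,F)}, so "the same reasons $H_1,H_2$ are $F$-free" cannot be invoked here. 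Similarly, your claim that every $P_\ell$ in $G'$ uses at least $\lchuto$ vertices of $A$ presupposes that the non-$A$ side is almost independent, which is false before (3) is proved: $G[D]$ is only $P_\ell$-free and can contain paths up to $P_{\ell-1}$. The paper's mechanism --- which you gesture at but do not execute --- needs no structural information about $G[A]$ at all: any copy of $F$ or $kP_\ell$ in $G'$ contains at least one and at most $k\ell+r$ of the replaced vertices, each of whose $G'$-neighborhood lies in $A$ (plus at most one partner), and since $|B'|\ge k\ell+r$ with every vertex of $B'$ adjacent to all of $A$, one swaps the replaced vertices for unused vertices of $B'$ and finds the same forbidden subgraph in $G$, a contradiction. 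This substitution argument is the missing idea; note also that the paper replaces all of $C\cup D$ in a single step and checks the aggregate edge count strictly increases, which sidesteps a wrinkle in your one-vertex rerouting: a vertex of $C$ can have total degree exceeding $k\lchuto-1$ through many $D$-neighbours (Lemma~\ref{lemma: edges inside BCD} bounds each $D$-vertex's edges into $B\cup C$, not the reverse), so your claimed supply of vertices with $d(v)<t$ requires the minimum-degree selection inside $D$ that you only hint at. Without the substitution step, your (3) and (4) remain unproven.
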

\begin{proof} (1) Let $X\subseteq A$ with $|X|= \lchuto-1$. By Lemma
\ref{lem: delete t vertices always have k-1 edges}, $\nu(\mathscr{H}[A\setminus X])\ge k-1$ which implies $|A\setminus X|\ge (k-1)\lchuto$. By Lemma \ref{lem: the size of A}, $|A|= k\lchuto -1$.

(2) Suppose $|B'|\le k\ell+r-1$. By Lemma \ref{lemma: edges inside BCD} and the fact $e(G[D])\leq \frac{l-2}{2}|D|$, we have
\begin{equation*}
    \begin{aligned}
        e(G)=& e(G[A])+e(A,B)+e(G[C])+e(C,V(G)\setminus C)+e(G[D])+e(D,V(G)\setminus D)+e(G[B])\\
        \leq & e(G[A])+\left(k\lchuto-2\right)\left(n-\left(k\lchuto-1\right)-|C|-|D|-|B'|\right)\\
        +&\left(k\lchuto-1\right)|B'|+ \left(1+(k-1)\lchuto -1\right)|C|+\left(1+\frac{\ell-2}{2}\right)|D|+1\\
        \leq &  {k\lchuto-1\choose 2}+\left(k\lchuto-2\right)\left(n-k\lchuto+1\right)\\
        -& \left(\lchuto-1\right)|C|-\left(k\lchuto-2-\frac{\ell-2}{2}\right)|D|+\left(k\lchuto-1\right)(k\ell+r-1)+1\\
        <&\left(k\lchuto-1\right)\left(n-k\lchuto+1\right),
    \end{aligned}
\end{equation*}
a contradiction with (\ref{eq-0}). 

(3) Suppose $C\not=\emptyset$ or $D\not=\emptyset$. Let $G'$ be the graph obtained from $G$ by deleting all vertices in $C\cup D$,  adding $|C|+|D|$ new isolate vertices (denote the set by $Y$),  and then connecting all these new vertices  with all vertices in $A$. By Lemma \ref{lemma: edges inside BCD} and the definitions of $C$ and $D$, $e(G')>e(G)$. If there exists a copy of $F$ or a copy of $kP_\ell$ in $G'$, then it will contain at least one and at most $k\ell+r$ vertices in $Y$. Since $N(u)\cap A=A$ for any $u\in B'$ and $|B'|\ge kP_\ell$ by (2), there is a copy of $F$ or $kP_\ell$ in $G$. Thus $G'$ is $\{k P_\ell,F\}$-free. But $e(G')>e(G)$, a contradiction.

(4) By Lemma \ref{lemma: edges inside BCD}, $e(G[B])\le 1$. Let $u\in B$. If $d_{G[B]}(u)=0$, by the same argument as that proof of (3), (4) holds. Assume $u_1u_2\in E(G[B])$ when $e(G[B])= 1$. Suppose $|N(u_i)\cap A|<|A|$, say $|N(u_1)\cap A|<|A|$. Let $G'$ be the graph obtained from $G$ by deleting $u_1u_2$ and connecting $u_1,u_2$ with all the vertices in $A$. By the same argument as that proof of (3), $G'$ is $\{k P_\ell,F\}$-free. But $e(G')\geq e(G)$ and $e(G'[V(G)\setminus A])<e(G[V(G)\setminus A])$, a contradiction.
\end{proof}

\vskip.2cm
Now we can prove the main result in the section.
\vskip.2cm
\begin{provekplF} By Lemma \ref{lastlem}, we have
$$e(G)=e(G[A])+e(G[B])+\left(k\lchuto-1\right)\left(n-k\lchuto+1\right).$$Recall $\mathscr{G}_1(F)=\{F[V(F)\backslash S]\mid S\subseteq V(F), e(F[S])=0\},$ and
$\mathscr{G}_2(F)=\{F[V(F)\backslash S]\mid S\subseteq V(F),e(F[S])\leq 1\}.$

Let  $\ell$ be an odd number. By Lemma \ref{lemma: edges inside BCD}, $e(G[B])\le 1$. Since $G$ is $\{k P_\ell,F\}$-free, by Lemma \ref{lastlem} (2) and (4), $e(G[A])\leq ex(|A|,\mathscr{G}_1(F))$
if $e(G[B])=0$ and $e(G[A])\leq ex(|A|,\mathscr{G}_2(F))$ if $e(G[B])=1$.

When $\ell$ is even, we have $e(G[B])= 0$ by Lemma \ref{lemma: edges inside BCD}. Then we also have
$e(G[A])\leq ex(|A|,\mathscr{G}_1(F)) $.
Thus we have
$$e(G)\leq \left(n-k\lchuto+1\right)\left(k\lchuto-1\right)+b_\ell,$$ where  $b_\ell=\max\{1+ex(k\lchuto-1,\mathscr{G}_2(F)),ex(k\lchuto-1,\mathscr{G}_1(F))\} $ when $\ell$ is odd and $b_\ell= ex(k\lchuto-1,\mathscr{G}_1(F))$ when $\ell$ is even.
\end{provekplF}

\vskip.2cm
\begin{provekplF=1}
    Similarly, we have
    $$e(G)\leq e(G[A])+e(G[B])+\left(k\lchuto-1\right)\left(n-k\lchuto+1\right).$$
    Recall that when $\beta_1(F)=1$, $F$ is a subgrapgh of $B_r$ where $r=v(F)$ and $B_r$ is the graph constructed by $r$ triangles sharing one edge.
    Suppose $e(G[A])\geq 1$, say $u_1u_2\in E(G[A])$.
    By Lemma \ref{lastlem}, $u_1u_2$ has at least $k\ell+r$ common neighbourhoods. Thus there is a copy of $B_r$ in $G$ which implies there is a copy of $F$  in $G$, a contradiction.

    We claim that if $(F,k,\ell)$ has property $\mathscr{P}$, then $e(G[B])=0$. By Lemma \ref{lemma: edges inside BCD}, we just need to consider that
     $\ell$ is odd. Suppose $e(G[B])\geq 1$ and $w_1w_2\in E(G[B])$.
    If $r\leq k\lchuto+1$, by Lemma \ref{lastlem}, we can similarly find a copy of $F$ in $G$, a contradiction. Since $(F,k,\ell)$ has property $\mathscr{P}$, we have
     $N(F,K_3)=1$ and $f(F)\leq k\lchuto$. Let $v\in A$. By Lemma \ref{lastlem}, $vw_1w_2v$ is a copy of $K_3$.  Since $f(F)\leq k\lchuto$, $d_G(v)\geq k\ell+r$ and $d_G(w_i)\geq |A|+1=k\lchuto$ for $i=1,2$, we can find a copy of $F$ in $G$, a contradiction. Hence, $e(G[B])=0$ if $(F,k,\ell)$ has property $\mathscr{P}$. Thus, if  $(F,k,\ell)$ has property $\mathscr{P}$, we have
    $$e(G)\leq\left(k\lchuto-1\right)\left(n-k\lchuto+1\right).$$

    \noindent If $(F,k,\ell)$ does not have property $\mathscr{P}$, we have
    $$e(G)\leq \left(k\lchuto-1\right)\left(n-k\lchuto+1\right)+1.$$
\end{provekplF=1}


\section{Proof of Theorem   \ref{p3}}

Let $G$ be a $\{k P_3,F\}$-free graph  of order $n$ with $ex(n,\{k P_3,F\})$ edges, where $n \geq 9(k^2+k+1)+2v(F)$. According to Theorem  \ref{thm: ex(n,kP3)}, $ex(n,k P_3) = (k-1)(n-k+1) + \left\lfloor \frac{n-k+1}{2} \right\rfloor + {k-1\choose 2}$, where $k\ge 2$.  Notice that $K_{k-1,n-k+1}$ is $\{k P_3,F\}$-free. We have
\[
(k-1)(n-k+1) \leq ex(n,\{k P_3,F\}) \leq (k-1)(n-k+1) + \left\lfloor \frac{n-k+1}{2} \right\rfloor + \binom{k-1}{2}.
\]
Since
\begin{align*}
    ex(n,\{k P_3,F\}) &\geq (k-1)(n-k+1)  \\&> (k-2)(n-k+2) + \left\lfloor \frac{n-k+2}{2} \right\rfloor + \binom{k-2}{2} \\&\geq ex(n,\{(k-1) P_3,F\}),
\end{align*}there are $k-1$ disjoint copies of $P_3$, say $P_3^1,\ldots,P_3^{k-1}$, in $G$.  Then $G[V(G)\setminus V(P_3^i)]$ is $(k-1) P_3$-free for $1\le i\le k-1$. Hence for any $1\le i\le k-1$, we have
\begin{align*}
    &e(G)-e(G[V(G)\setminus V(P_3^i)])\geq ex(n,\{k P_3,F\})-ex(n-3,\{(k-1) P_3,F\})\\
     &\geq (k-1)(n-k+1)-\left((k-2)(n-k-1)+\lf\frac{n-k-1}{2}\rf+\binom{k-2}{2}\right)
     \geq\frac{n}{3}.
\end{align*}
Thus there is  $y_i \in V(P_3^i)$ such that $d(y_i)\ge \frac{n}{9}\ge k^2+k+1+\frac{2v(F)}{9}$ for any $1\le i\le k-1$. Let $A=\{y_1,\ldots,y_{k-1}\}$. We have the following lemmas.
\begin{lemma}\label{lemmaA}
     $\Delta(G[V(G) \setminus A]) \leq 1$.
\end{lemma}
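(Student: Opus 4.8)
The plan is to prove $\Delta(G[V(G)\setminus A])\le 1$ by contradiction: assuming some vertex $v\in V(G)\setminus A$ has at least two neighbors inside $V(G)\setminus A$, I will build $k$ disjoint copies of $P_3$ and contradict the $kP_3$-freeness of $G$. The idea is that each $y_i\in A$ has enormous degree (at least $k^2+k+1+\frac{2v(F)}{9}$), so each $y_i$ together with two of its private neighbors yields a copy of $P_3$, and these $k-1$ paths can be made pairwise disjoint and disjoint from any prescribed small set of vertices, because the available neighborhoods are far larger than the few vertices we must avoid. Combining a $k$-th path centered at $v$ with these $k-1$ greedily-chosen disjoint $P_3$'s around $A$ gives the forbidden $kP_3$.

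The key steps, in order, are as follows. First, suppose for contradiction that there is $v\in V(G)\setminus A$ with $d_{V(G)\setminus A}(v)\ge 2$, and pick two neighbors $a,b$ of $v$ in $V(G)\setminus A$; then $avb$ is a copy of $P_3$ using only vertices outside $A$. Second, I would construct $k-1$ pairwise disjoint copies of $P_3$, one centered at each $y_i$, all avoiding the three vertices $\{v,a,b\}$: process $y_1,\dots,y_{k-1}$ one at a time, and at step $i$ select two neighbors of $y_i$ not yet used by the earlier paths, not in $A$, and not in $\{v,a,b\}$. The third step is the counting that makes the greedy choice possible: when choosing the path at $y_i$ we must avoid at most $|A|-1 = k-2$ other high-degree centers, at most $2(i-1)\le 2(k-2)$ vertices used by previous paths, the vertex $y_i$ itself is the center so it is fine, and the three vertices $v,a,b$; since $d(y_i)\ge \frac n9 \ge k^2+k+1+\frac{2v(F)}{9}$ which comfortably exceeds $(k-2)+2(k-2)+3 = 3k-3$, there remain at least two admissible neighbors of $y_i$, so the path $P_3^i$ can be completed. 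Finally, these $k-1$ paths together with $avb$ form $k$ vertex-disjoint copies of $P_3$, contradicting that $G$ is $kP_3$-free, which proves $\Delta(G[V(G)\setminus A])\le 1$.

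One subtlety I would handle carefully is that a chosen neighbor of $y_i$ might itself be some $y_j$ or a previously used endpoint, so the admissible-neighbor count must explicitly subtract all vertices already committed to $A$, to previously built paths, and to $\{v,a,b\}$; I would write the bound so that it is transparent that the degree lower bound on $y_i$ dominates the total number of excluded vertices. I do not expect a genuine obstacle here, since the degree bound was engineered precisely to leave slack after removing $O(k)$ vertices. The main thing to get right is the bookkeeping of exactly which vertices each new path must avoid, ensuring no double counting and that the final inequality $d(y_i) > 3k-3$ (using $d(y_i)\ge k^2+k+1$ and $k\ge 2$) clearly holds; this is the step that, while routine, carries the whole argument.
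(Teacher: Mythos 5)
Your proposal is correct and is essentially the paper's own argument: the paper likewise takes a copy of $P_3$ in $G[V(G)\setminus A]$ and uses $d(y_i)\ge \frac{n}{9}>k^2+k+1$ to find $k-1$ disjoint copies of $P_3$ centered at the vertices of $A$ avoiding that path, contradicting $kP_3$-freeness. The only difference is that you spell out the greedy bookkeeping (at most $3k-3$ forbidden vertices against $d(y_i)\ge k^2+k+1$), which the paper leaves implicit.
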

\begin{proof} Suppose $\Delta(G[V(G) \setminus A]) \geq 2$. Then there is a copy of $P_3$ in $G[V(G) \setminus A]$, say $P'$. Since $d(y_i)\ge \frac{n}{9}> k^2+k+1$ for any $1\le i\le k-1$, there are  $(k-1)$ disjoint copies of $P_3$ in $G[V(G)\setminus V(P')]$, a contradiction with  $G$ being $k P_3$-free.
\end{proof}

\begin{lemma}\label{infinity}
    If $\sigma(F) = \infty$, then $N(u)\cap A=A$ for any $u\in V(G)\setminus A$  and $e(G[V(G) \setminus A]) = \left\lfloor \frac{n-k+1}{2} \right\rfloor$.
\end{lemma}
\begin{proof}
    Since $\sigma(F) = \infty$,  $I_{k-1} + M_{n-k+1}^{\left\lfloor \frac{n-k+1}{2} \right\rfloor}$ is $\{k P_3,F\}$-free. We immediately get
    \begin{equation}\label{eq-1}
    e(G)=ex(n,\{k P_3,F\}) \geq (k-1)(n-k+1) + \left\lfloor \frac{n-k+1}{2} \right\rfloor.
    \end{equation}

   Let $X=\{u\in V(G)\setminus A:N(u)\cap A=A\}$. By Lemma \ref{lemmaA}, $e(G[V(G) \setminus A]) \le  \left\lfloor \frac{n-k+1}{2} \right\rfloor$. If $|X|\le n-k-\binom{k-1}{2}$, then 
    \begin{align*}
        e(G) &= e(G[A]) + e(G[V(G) \setminus A]) + e(A, V(G) \setminus A) \\
        &\leq \binom{k-1}{2} + \left\lfloor \frac{n-k+1}{2} \right\rfloor + (k-1)(n-k+1) - \binom{k-1}{2} - 1\\
        &= \left\lfloor \frac{n-k+1}{2} \right\rfloor + (k-1)(n-k+1) -1,
            \end{align*}
     a contradiction with (\ref{eq-1}). Hence $|X|\ge n-k+1-\binom{k-1}{2}$. Note that $e(G[X])\geq \frac{1}{2}(|X|-|V(G)\setminus(A\cup X)|)\geq \frac{1}{2}(n-k+1-2\binom{k-1}{2}).$ Since $n\geq 9(k^2+k+1)+2v(F)$, $e(G[X])\geq v(F)$.

     Suppose $V(G) \setminus A\not=X$. Let $u\in V(G) \setminus (A\cup X)$. Let $G'$ be the graph with $V(G')=V(G)$ and $E(G')=E(G)\cup \{ux|ux\notin E(G) \mbox{~and~}x\in A\}$. Then $G'$ is $kP_3$-free. If there is a copy of $F$ in $G'$, say $F'$, then $u\in V(F')$. Since $e(G[X])\ge v(F)$ and $\Delta(G[V(G) \setminus A]) \leq 1$ (Lemma \ref{lemmaA}), we can choose $v\in X\setminus N_X(u)$ with $N_X(v)\not=\emptyset$ such that $G[(V(F')\setminus\{u\})\cup\{v\}]$ is a copy of $F$, a contradiction. Thus $G'$ is $\{k P_3,F\}$-free but $e(G')>e(G)$, a contradiction. Hence $V(G) \setminus A=X$.





    Suppose $e(G[V(G) \setminus A]) <  \left\lfloor \frac{n-k+1}{2} \right\rfloor$. Then there are two vertices $u, v \in V(G) \setminus A$ such that $N_G(u) \setminus A = N_G(v) \setminus A = \emptyset$. Let $G'=G+uv$. By the same argument as above, we have $G'$ is $\{k P_3,F\}$-free but $e(G')>e(G)$, a contradiction.
\end{proof}

\begin{lemma}\label{ninfinity}
    Let $G$ be the maximum $\{k P_3,F\}$-free graph with minimum $e(G[V(G)\setminus A])$. If $\sigma(F) < \infty$, then  $N(u)\cap A=A$ for any $u\in V(G)\setminus A$  and $e(G[V(G) \setminus A]) \leq \sigma(F)$.
\end{lemma}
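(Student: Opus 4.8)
The plan is to follow the template of Lemma \ref{infinity}, splitting the statement into the complete-join conclusion $N(u)\cap A=A$ and the bounded-matching conclusion $e(G[V(G)\setminus A])\le \sigma(F)$. First I would record the lower bound used throughout: the graph $I_{k-1}+M^{\sigma(F)}_{n-k+1}$ (an independent $A$ of size $k-1$ completely joined to a matching of $\sigma(F)$ edges) is $\{kP_3,F\}$-free. Indeed only the $k-1$ vertices of $A$ can meet a $P_3$, since deleting $A$ leaves maximum degree one and hence no $P_3$; and any copy of $F$ would use at most $v(F)$ vertices of the independent part, so would already sit inside $I_{v(F)}+M_{\sigma(F)}$, contradicting the definition of $\sigma(F)$. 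This gives $e(G)\ge (k-1)(n-k+1)+\sigma(F)$. Combined with $\Delta(G[V(G)\setminus A])\le 1$ (Lemma \ref{lemmaA}) and the crude bound $e(G)\le \binom{k-1}{2}+\lf\frac{n-k+1}{2}\rf+(k-2)(n-k+1)+|X|$, where $X=\{u\in V(G)\setminus A: N(u)\cap A=A\}$ (vertices outside $X$ contribute at most $k-2$ to $e(A,V(G)\setminus A)$), this forces $|X|\ge \lc\frac{n-k+1}{2}\rc-\binom{k-1}{2}$, so $X$ is large.

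For the join conclusion I would use an edge-adding/relocation argument. Take $u\in V(G)\setminus(A\cup X)$ and let $G'$ be obtained by joining $u$ to all of $A$. Since $G'[V(G)\setminus A]=G[V(G)\setminus A]$ still has maximum degree one, every $P_3$ of $G'$ meets the $(k-1)$-set $A$, so $G'$ is automatically $kP_3$-free; thus if $G'$ were also $F$-free we would contradict the maximality of $e(G)$. Hence $G'$ contains a copy $F'$ of $F$ through $u$. If all $F'$-neighbours of $u$ lie in $A$, I relocate $u$ to a fresh vertex of $X$ (available since $|X|$ is large), whose full adjacency to $A$ reproduces $F$ inside $G$, a contradiction. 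The remaining case is that $u$ is matched inside $V(G)\setminus A$ to some $w$ used by $F'$; here, unlike the $\sigma=\infty$ case, I cannot assume an abundance of fresh matched pairs in $X$, so I would instead invoke the minimality of $e(G[V(G)\setminus A])$: delete $uw$, add the missing edge from $u$ to $A$, obtaining a graph with the same number of edges but strictly fewer inside $V(G)\setminus A$, and verify it is still $\{kP_3,F\}$-free, contradicting the choice of $G$. This yields $N(u)\cap A=A$ for every $u$.

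Once the join is complete, $G$ is the complete join of $G[A]$ with $G[V(G)\setminus A]$, the latter a matching of $m:=e(G[V(G)\setminus A])$ edges together with isolated vertices. The clean way to finish is the observation that a copy of $F$ in $G$ corresponds exactly to a set $S\subseteq V(F)$ with $\Delta(F[S])\le 1$ and $e(F[S])\le m$ for which $F[V(F)\setminus S]\subseteq G[A]$; equivalently, $G$ is $F$-free iff $G[A]$ is $\mathscr{H}_m(F)$-free. Now suppose $m\ge \sigma(F)+1$. By the definition of $\sigma(F)$ there is a copy of $F$ in $I_{v(F)}+M_{\sigma(F)+1}$, providing $S_0\subseteq V(F)$ with $F[V(F)\setminus S_0]$ edgeless, $\Delta(F[S_0])\le 1$ and $e(F[S_0])\le \sigma(F)+1\le m$; thus the empty graph $\overline{K_t}$ on $t:=|V(F)\setminus S_0|$ vertices lies in $\mathscr{H}_m(F)$. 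Since every graph on $k-1$ vertices contains $\overline{K_t}$ whenever $t\le k-1$, the graph $G[A]$ cannot be $\mathscr{H}_m(F)$-free, so $F\subseteq G$, a contradiction; hence $m\le \sigma(F)$.

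The main obstacle is twofold. In the join step, the delicate case is when $u$'s matching partner is consumed by $F'$: because the finite-$\sigma$ regime makes the matching in $V(G)\setminus A$ small, it must be resolved through the minimality of $e(G[V(G)\setminus A])$ rather than by relocating onto a fresh matched pair. In the matching-bound step, the crux is guaranteeing $t=|V(F)\setminus S_0|\le k-1$, i.e. that $F$ admits an independent set of size at most $v(F)-\sigma(F)-1\le k-1$ whose deletion leaves maximum degree at most one; this is exactly where the hypotheses on $F$ and the admissible range of $k$ must be used, and where the explicit structure of $\sigma(F)$ and of the families $\mathscr{H}_i(F)$ enters.
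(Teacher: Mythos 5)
Your overall architecture is the paper's own, modulo a reordering: the paper first asserts $e(G[V(G)\setminus A])\le\sigma(F)$ in a single sentence (``otherwise, there would either be a copy of $F$ in $G$ or a copy of $P_3$ in $G[V(G)\setminus A]$''), uses that to bound $|X|$, and then runs exactly your relocation argument --- take $u\in V(G)\setminus(A\cup X)$, set $e=uv$ if $u$ has a matching partner, delete $e$, join $u$ to $A$, relocate any resulting copy of $F$ onto a fresh vertex of $X$, and contradict either maximality of $e(G)$ or minimality of $e(G[V(G)\setminus A])$. Your join step, including the use of the secondary minimality condition for the matched case, is the paper's verbatim; your derivation of $|X|\ge\lceil\frac{n-k+1}{2}\rceil-\binom{k-1}{2}$ from the crude bound $e(G[V(G)\setminus A])\le\lfloor\frac{n-k+1}{2}\rfloor$ is a harmless (indeed safer) variant, since it does not presuppose the matching bound. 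Your exact characterization ``$G$ is $F$-free iff $G[A]$ is $\mathscr{H}_m(F)$-free,'' given the complete join, is also correct.

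The genuine gap is precisely the one you flagged: your final step needs $t=|V(F)\setminus S_0|\le k-1$, and nothing in the lemma's hypotheses ($\chi(F)\ge3$, $\sigma(F)<\infty$, $k\ge2$, $n$ large) supplies it; worse, the step cannot be repaired, because the conclusion itself fails when $t>k-1$. Take $F=C_5$ and $k=2$. Since $\alpha(C_5)=2$, any $S$ with $F[V(F)\setminus S]$ edgeless has $|V(F)\setminus S|\ge2$, and one checks $\sigma(C_5)=1$ (so $t=2>k-1=1$). Now the graph $I_1+M^{\lfloor(n-1)/2\rfloor}_{n-1}$ (one hub joined to a near-perfect matching) is $2P_3$-free, is $C_5$-free (its only cycles are triangles through the hub), has the complete join and attains $ex(n,2P_3)=(n-1)+\lfloor\frac{n-1}{2}\rfloor$, hence is the extremal graph --- yet $e(G[V(G)\setminus A])=\lfloor\frac{n-1}{2}\rfloor\gg\sigma(C_5)$. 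So the second conclusion of the lemma is false in this case; the paper's one-line justification silently assumes exactly the containment you need (that $\sigma(F)+1$ matching edges joined to the $(k-1)$-set $A$ already force a copy of $F$), so you have not missed an idea recoverable from the paper --- you have isolated an actual error in it. The same phenomenon undermines the opening construction you both use: $I_{k-1}+M^{\sigma(F)}_{n-k+1}$ need not be $F$-free, because your proposed embedding into $I_{v(F)}+M_{\sigma(F)}$ loses the edges between $I_{k-1}$ and the unmatched vertices; concretely $C_5\subseteq I_2+M^1_3$ although $\sigma(C_5)=1$ (your $|X|$ count survives, since the trivial bound $e(G)\ge(k-1)(n-k+1)$ suffices there). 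Your argument is complete exactly under the additional hypothesis that some copy of $F$ in $I_{v(F)}+M_{\sigma(F)+1}$ uses at most $k-1$ vertices of the independent part --- e.g.\ for $F=K_r$, the case the paper's corollary actually uses --- but as stated, this step is a genuine hole in your proposal and in the paper alike.
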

\begin{proof}
    Since $I_{k-1} + M_{n-k+1}^{\sigma(F)}$ is $\{k P_3,F\}$-free, we immediately get
    \begin{equation}\label{eq-2}
    ex(n,\{kP_3,F\}) \geq (k-1)(n-k+1) + \sigma(F).
\end{equation}
    From the definition of $\sigma(F)$ and Lemma \ref{lemmaA},
     $e(G[V(G) \setminus A])\le \sigma(F)$. Otherwise, there would either be a copy of $F$ in $G$ or a copy of $P_3$ in $G[V(G) \setminus A]$, a contradiction.  Let $X=\{u\in V(G)\setminus A:N(u)\cap A=A\}$. By Lemma \ref{lemmaA}, $e(G[V(G) \setminus A]) \le  \left\lfloor \frac{n-k+1}{2} \right\rfloor$. If $|X|\le n-k-\binom{k-1}{2}$, then
    \begin{align*}
        e(G) &= e(G[A]) + e(G[V(G) \setminus A]) + e(A, V(G) \setminus A) \\
        &\leq \binom{k-1}{2} + \sigma(F) + (k-1)(n-k+1) - \binom{k-1}{2} - 1 \\
        &= (k-1)(n-k+1) + \sigma(F)-1,
    \end{align*}
     a contradiction with (\ref{eq-2}). Hence $|X|\ge n-k+1-\binom{k-1}{2}>v(F)$ by $n \geq 9(k^2+k+1)+2v(F)$.

     Suppose $V(G) \setminus A\not=X$. Let $u\in V(G) \setminus (A\cup X)$. Assume $e=uv$ if $d_{G[V(G) \setminus A]}(u)=1$; otherwise let $e=\emptyset$. Let $G'$ be the graph with $V(G')=V(G)$ and $E(G')=(E(G)\setminus\{e\})\cup \{ux|ux\notin E(G) \mbox{~and~}x\in A\}$. Then $G'$ is $kP_3$-free. If there is a copy of $F$ in $G'$, say $F'$, then $u\in V(F')$. Since $|X|\ge v(F)$, we can choose $v\in X$  such that $G[(V(F')\setminus\{u\})\cup\{v\}]$ is a copy of $F$, a contradiction. Thus $G'$ is $\{k P_3,F\}$-free but $e(G')>e(G)$,  or $e(G') = e(G)$ and $e(G'[V(G) \setminus A]) < e(G[V(G) \setminus A])$, a contradiction.
\end{proof}

\vskip.2cm
Now we complete the proof of Theorem  \ref{p3}.

\noindent\textit{Proof of Theorem  \ref{p3}} We first consider the case $\sigma(F) = \infty$. Let $G$ be a $\{k P_3,F\}$-free graph  of order $n$ with $ex(n,\{k P_3,F\})$ edges, where $n \geq 9(k^2+k+1)+2v(F)$. From Lemma \ref{infinity},  we can only add edges between the vertices in $A$. To avoid creating any copy of $F$, at most $ex(k-1, \mathscr{H}(F))$ edges can be added. Hence,
\[
ex(n,\{k P_3,F\}) = (k-1)(n-k+1) + \left\lfloor \frac{n-k+1}{2} \right\rfloor + ex(k-1, \mathscr{H}(F)).
\]

Now we consider the case $\sigma(F) < \infty$. Let $G$ be a $\{k P_3,F\}$-free graph  of order $n$ with $ex(n,\{k P_3,F\})$ edges and minimum $e(G[V(G) \setminus A])$, where $n \geq 9(k^2+k+1)+2v(F)$. Let $e(V(G) \setminus A) = i$. Then $i \leq \sigma(F)$ by Lemma  \ref{ninfinity}. From Lemma  \ref{ninfinity}, $G$ can be obtained by adding edges between the vertices in $A$. To avoid creating any copy of $F$, at most $ex(k-1, \mathscr{H}_i(F))$ edges can be added. Hence, by considering all $i$, we get
\[
ex(n,\{k P_3,F\}) = \max_{0 \leq i \leq \sigma(F)} \left\{(k-1)(n-k+1) + i + ex(k-1, \mathscr{H}_i(F))\right\}.
\]
\hfill$\square$

\section{Proofs of Theorems \ref{thm: linear forest and beta 2} and  \ref{thm: linear forest and beta 1}}
Let $H=(\cup_{i\in I_1}P_{2\ell_i})\cup (\cup_{j\in I_2}P_{2\ell_j+1})$, where $I_1$ (resp. $I_2$) is the collection of indices of even paths (resp. odd paths) and $|I_1|+|I_2|=k\ge 2$.
Let $\ell=\ell_1+\ell_2+\dots+\ell_k$.


\subsection{The low bounds}
In this subsection, we give the lower bound of $ex(n,\{H,F\})$.

We first consider the low bound of Theorem \ref{thm: linear forest and beta 2}. Then $\beta_1(F)\geq 2$.
Let $H_1'(n,k,\ell,F)$ be the graph obtained by embedding a $\mathscr{G}_1(F)$-free graph of size $ex(\ell-1,\mathscr{G}_1(F))$ to the smaller part of $K_{\ell-1,n-\ell+1}$. Let $H_2'(n,k,\ell,F)$ be the graph obtained by embedding a $\mathscr{G}_2(F)$-free graph of size $ex(\ell-1,\mathscr{G}_2(F))$  to the smaller part of $K_{\ell-1,n-\ell+1}$ and adding one edge in the larger part. It is easy to check $H_1'(n,k,\ell,F)$ is $\{H,F\}$-free regardless of whether $I_1$ is an empty set or not, and $H_2'(n,k,\ell,F)$ is $\{H,F\}$-free if $I_1=\emptyset$. Hence we have $$ex(n,\{F,H\})\ge \left(n-\ell+1\right)\left(\ell-1\right)+b_\ell',$$where  $b_\ell'=\max\{1+ex(\ell-1,\mathscr{G}_2(F)),ex(\ell-1,\mathscr{G}_1(F))\} $ when $I_1=\emptyset$ and $b_\ell'= ex(\ell-1,\mathscr{G}_1(F))$ otherwise.

Now we consider the case $\beta_1(F)=1$, that is, the low bound of Theorem \ref{thm: linear forest and beta 1}.
Note that $K_{\ell-1,n-\ell+1}$ is $\{H,F\}$-free.
Then  $K_{\ell-1,n-\ell+1}$ provides the lower bound of $ex(n,\{H,F\})$ when $(F,H)$ has condition $\mathscr{R}$. So  $ex(n,\{H,F\})\ge (\ell-1)(n-\ell+1)$ if $\beta_1(F)=1$ and $(F,H)$ has condition $\mathscr{R}$.

Let $\beta_1(F)=1$ and $(F,H)$ do not have condition $\mathscr{R}$. Then $I_1=\emptyset$, $r=v(F)>\ell+1$, and $N(F,K_3)\ge 2$ or $f(F)> \ell$. Adding one edge to the larger part of $K_{\ell-1,n-\ell+1}$, and we denote the obtained graph by $G_0'$. Then $e(G_0')=(\ell-1)(n-\ell+1)+1$. It is easy to check that  $G_0'$ is $H$-free by $I_1=\emptyset$.
We will prove it is $F$-free. Let $\{v_1v_2\}$ be the edge control set of $F$.

If $N(F,K_3)\geq 2$, then $v_1v_2$ is contained in at least two copies of $K_3$. If $G_0'$ has a copy of $F$, then  $v_1v_2$ must be the unique edge  in the larger part. Since $|N_{G_0'}(v_1)\cup N_{G_0'}(v_2)|=\ell-1+2<r$, we have a contradiction with $|N_F(v_1)\cup N_F(v_2)|=r$.
If $N(F,K_3)=1$, then  $f(F)\geq \ell+1$. In this case, $v_1v_2$ is contained in the $K_3$. If $v_1v_2$ is the unique edge in the larger part, then we can similarly have a contradiction. If  $v_1$ is in the smaller part of $G_0'$ and $v_2$ belongs to the larger part of $G_0'$, then $\min\{d_{G_0'}(v_1),d_{G_0'}(v_2)\}=d_{G_0'}(v_2)\leq \ell$, but $f(F)=\min\{d_F(v_1),d_F(v_2)\}\geq \ell+1$, a contradiction. Thus $G_0'$ is $F$-free. So  $ex(n,\{H,F\})\ge (\ell-1)(n-\ell+1)+1$ if $\beta_1(F)=1$ and $(F,H)$ has no condition $\mathscr{R}$.
\subsection{The upper bound}

Let  $H=\cup_{i=1}^k H_i$, where $H_i$ is a path, $k\ge 2$  and $v(H_i)\ge 3$ for $1\le i\le k$. Let $\ell_i=\lfloor\frac{v(H_i)}{2}\rfloor$ for $1\le i\le k$ and $\ell=\ell_1+\ell_2+\dots+\ell_k$. Assume that $\ell_1\leq \ell_2\leq \dots\leq \ell_k$. Let  $\ell'=\ell-\ell_1$, $h=v(H)=\sum_{i\in I_1}2\ell_i+\sum_{j\in I_2}(2\ell_j+1)$, $H'=H_2\cup \dots \cup H_k$ and $h'=v(H')=h-v(H_1)$, where $I_1$ (resp. $I_2$) is the collection of indices of even paths (resp. odd paths). Since $v(H_i)\ge 3$, we have $\ell_i\geq 2$ if $i\in I_1$ and $\ell_j\geq 1$ if $j\in I_2$.
Since $k\geq 2$ and $H\neq kP_3$, we have $\ell_k\geq 2$. By Theorem \ref{thm: ex(n,kpl,F)}, we will assume $\ell_1<\ell_k$ when $I_1=\emptyset$.

Let $G$ be a $\{H,F\}$-free graph with $n$ vertices and $ex(n,\{H,F\})$ edges. Since $K_{\ell -1,n-\ell +1}$ is an $\{H,F\}$-free graph,  $e(G)\geq (\ell -1)(n-\ell +1)$. In this subsection, we always suppose $n$ is large enough. Then by Theorem \ref{thm: old linear forest}, $e(G)\geq (\ell-1)(n-\ell+1)> ex(n,H')$.
\begin{lemma}\label{lem: bounded edges of E(H',V-H')}
    For any copy of $H'$ in $G$, we have $e(V(H'),V(G)\setminus V(H'))\geq \ell'n-6\ell^2$. Moreover, there exists $A\subseteq V(H')$ such that $|A|=\ell'$ and $|N(A)|\ge n':=\frac{n-6\ell^2}{2(\ell'+1){h'\choose \ell'}}$.
\end{lemma}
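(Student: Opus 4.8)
The plan is to fix an arbitrary copy of $H'$ in $G$, set $G' = G[V(G)\setminus V(H')]$, and reduce everything to an upper bound on $e(G')$. Since $G$ is $H$-free and $H = H_1 \cup H'$, the graph $G'$ cannot contain $H_1 = P_{v(H_1)}$: a copy of $H_1$ in $G'$ would be vertex-disjoint from the copy of $H'$ and would complete a forbidden $H$. So $G'$ is $P_{v(H_1)}$-free. Writing $e(G) = e(G[V(H')]) + e(V(H'),V(G)\setminus V(H')) + e(G')$ and using $e(G)\ge(\ell-1)(n-\ell+1)$ together with $e(G[V(H')])\le\binom{h'}{2}$ gives
$$e(V(H'),V(G)\setminus V(H')) \ge (\ell-1)(n-\ell+1) - \binom{h'}{2} - e(G').$$
Thus the first assertion is equivalent to showing $e(G') \le (\ell_1-1)(n-h') + O(\ell^2)$.

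When $v(H_1) = 2\ell_1$ is \emph{even}, this is immediate from Erd\H{o}s--Gallai (Theorem \ref{G}): $e(G') \le \frac{v(H_1)-2}{2}(n-h') = (\ell_1-1)(n-h')$. Substituting, the coefficient of $n$ becomes $(\ell-1)-(\ell_1-1) = \ell'$, and the remaining terms are controlled using $h' = \sum_{i\ge 2}v(H_i) \le 3\ell'$ (each path has $v(H_i)\le 3\ell_i$); a crude estimate of $(\ell-1)^2 + \binom{h'}{2}$ then yields the claimed $\ell'n - 6\ell^2$.

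The main obstacle is the \emph{odd} case $v(H_1) = 2\ell_1+1$, where Theorem \ref{G} only gives $e(G')\le(\ell_1-\tfrac12)(n-h')$, producing a deficit of order $n/2$ that $6\ell^2$ cannot absorb. The reason is that the Erd\H{o}s--Gallai bound for odd paths is attained only by disjoint copies of $K_{2\ell_1}$, whose density $\ell_1-\tfrac12$ per vertex strictly exceeds the ``bipartite'' density $\ell_1-1$ appearing in the extremal construction. The plan is to preclude this clique-heavy configuration. I would invoke a refined (connected) Erd\H{o}s--Gallai estimate to argue that every component of $G'$ of order exceeding $2\ell_1$ has density only $\ell_1-1+o(1)$, so any surplus of $e(G')$ above $(\ell_1-1)(n-h')$ must come from near-$K_{2\ell_1}$ components. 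Such a component $Q$ contains a Hamiltonian $P_{2\ell_1}$; if one of its endpoints has a neighbour $x\in V(H')$, then $Q\cup\{x\}$ carries $P_{2\ell_1+1}=H_1$, and re-routing the path of $H'$ through $x$ (this is exactly where $\ell_1<\ell_k$ is used, guaranteeing a long enough path in $H'$ to spare one vertex) recovers a full copy of $H$, a contradiction; if instead $Q$ sends no edge to $V(H')$, it is an isolated $K_{2\ell_1}$ in $G$, which can be rewired into a denser configuration, contradicting the extremality of $G$. Either way $e(G')\le(\ell_1-1)(n-h')+O(\ell^2)$, restoring the coefficient $\ell'$. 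Making the rerouting and the exchange quantitatively clean, and fitting the additive error inside $6\ell^2$, is the delicate part.

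For the ``moreover'' I would deduce the existence of $A$ directly from the edge bound. Put $W = \{w\in V(G)\setminus V(H') : d_{V(H')}(w) \ge \ell'\}$. Every vertex outside $W$ contributes at most $\ell'-1$ cross edges, so $|W|(h'-\ell'+1) \ge e(V(H'),V(G)\setminus V(H')) - (n-h')(\ell'-1) \ge n - 6\ell^2$, whence $|W| \ge \frac{n-6\ell^2}{h'-\ell'+1}$. Each $w\in W$ contains an $\ell'$-element subset of $V(H')$ inside its neighbourhood, and there are only $\binom{h'}{\ell'}$ such subsets, so by pigeonhole some $\ell'$-set $A\subseteq V(H')$ satisfies $|N(A)| \ge \frac{n-6\ell^2}{(h'-\ell'+1)\binom{h'}{\ell'}}$. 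Finally $h'\le 3\ell'$ gives $h'-\ell'+1 \le 2\ell'+1 \le 2(\ell'+1)$, so $|N(A)| \ge \frac{n-6\ell^2}{2(\ell'+1)\binom{h'}{\ell'}} = n'$, as required. (Alternatively, one may package this averaging through Lemma \ref{lem: t vertices with many common neighbors} with $F_1=H'$, $F_2=H_1$, $t=\ell'$; but that route inherits the same odd-path deficiency, so I prefer to isolate the difficulty in the edge bound above.)
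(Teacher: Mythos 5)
Your reduction, your treatment of the even case, and your ``moreover'' deduction coincide with the paper's proof: the paper's entire argument is the two-line computation $e(V(H'),V(G)\setminus V(H'))\ge e(G)-\binom{3\ell'}{2}-ex(n,H_1)$ followed by Erd\H{o}s--Gallai, and then precisely your counting of $X=\{v\in V(G)\setminus V(H'):d_{V(H')}(v)\ge \ell'\}$ with a pigeonhole over the $\binom{h'}{\ell'}$ subsets of $V(H')$ (the paper bounds the cross edges by $3\ell'|X|+(\ell'-1)(n-|X|)$, you by $h'|W|+(\ell'-1)(n-|W|)$; same argument, same constants). The genuine gap is that your odd case is a plan, not a proof, and each of its pillars is unestablished: (i) the ``refined (connected) Erd\H{o}s--Gallai estimate'' is never stated or applied quantitatively to show that a surplus of order $n/2$ forces $\Omega(n)$ vertices in components of order at most $2\ell_1$; (ii) the rerouting step needs a copy of $H'$ inside $G-(Q\cup\{x\})$, and your appeal to $\ell_1<\ell_k$ ``sparing one vertex'' does not produce one --- deleting $x$ destroys the only copy of $H'$ in hand; what would actually work is an edge count (vertices of the component $Q$ have all neighbours in $Q\cup V(H')$, hence degree at most $2\ell_1-1+h'$, so $e(G-(Q\cup\{x\}))\ge e(G)-O(\ell^2)-d(x)>ex(n-2\ell_1-1,H')$), but this must be checked and is tight when $\ell_1=1$; (iii) the ``rewiring'' of an isolated $K_{2\ell_1}$ into a denser configuration must be exhibited and shown to preserve $\{H,F\}$-freeness, which you do not do. You yourself flag that fitting the error into $6\ell^2$ is ``the delicate part''; that part is the proof.

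That said, your diagnosis of the odd case is pointing at a real soft spot rather than at a detour the paper avoids. As printed, the paper's chain uses $ex(n,H_1)\le \ell_1 n$, which yields only $(\ell'-1)n-O(\ell^2)$; even the sharp Erd\H{o}s--Gallai bound $ex(m,P_{2\ell_1+1})\le(\ell_1-\tfrac12)m$ yields only $(\ell'-\tfrac12)n-O(\ell^2)$ when $v(H_1)=2\ell_1+1$. The stated bound $\ell'n-6\ell^2$ follows from the paper's computation only when $v(H_1)=2\ell_1$ is even, where $ex(m,H_1)\le(\ell_1-1)m$, and the full-strength coefficient $\ell'$ is genuinely needed downstream: Lemma \ref{lem: if a copy H' contains many small degree vertices, then l' vertices have large degree} subtracts $(\ell'-1)n$ to conclude $d(v)\ge n-c'$, so a deficit of $n/2$ cannot be absorbed by adjusting constants. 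In short, your even-case argument plus pigeonhole \emph{is} the paper's proof; for odd $v(H_1)$ the paper supplies no counterpart of your stability-and-exchange machinery, and your proposal, while correctly identifying that something extra is required there, does not supply it either.
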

\begin{proof}
    Since  $e(G)> ex(n,H')$, there is a copy of $H'$, also denoted by $H'$,  in $G$. Then $G[V(G)\setminus V(H')]$ is $H_1$-free by $G$ being $H$-free. Note that   $h'\le 3\ell'$. Then 
    $$e(V(H'),V(G)\setminus V(H'))\geq e(G)-\binom{3\ell'}{2}-ex(n,H_1)\geq (\ell-1)(n-\ell+1)-\binom{3\ell'}{2}-\ell_1n\geq \ell'n-6\ell^2.$$
    Let $X:=\{v\in V(G)\setminus V(H'):~|N(v)\cap V(H')|\geq \ell'\}$. Then $e(V(H'),V(G)\setminus V(H'))\le3\ell'|X|+(\ell'-1)n$. By
    $3\ell'|X|+(\ell'-1)(n-|X|)\geq  \ell'n-6\ell^2,$
     we have $|X|\geq \frac{n-6\ell^2}{2\ell'+1}$. As there are only $\binom{h'}{\ell'}$ sets of $\ell'$ vertices in $H'$, there is $A\subseteq V(H')$ such that $|A|=\ell'$ and $|N(A)|\ge \frac{n-6\ell^2}{(2\ell'+1){h'\choose \ell'}}>\frac{n-6\ell^2}{2(\ell'+1){h'\choose \ell'}}$.
\end{proof}
For a rough bound, we may assume $e(G)\leq 3\ell n$ by Theorem \ref{thm: old linear forest}. Let
$$c=\frac{6\ell n}{n'/2}=\frac{24\ell n}{(n-6\ell^2)/((\ell'+1){h'\choose \ell'})}\leq 25\ell(\ell'+1){h'\choose \ell'},$$
when $n$ is large enough. Then $c$ is bounded by a constant related to $\ell'$ and $h'$. Set
$D_{\le c}=\{u\in V(G):d(u)\le c\}$ and $D_{>c}=\{u\in V(G):d(u)> c\}$. Then $|D_{>c}|\le n'/2$ by $e(G)\leq 3\ell n$.
Since  $e(G)\geq (\ell-1)(n-\ell+1)>ex(n,H')$, by Theorem \ref{thm: old linear forest}, there is a copy of $H'$ in $G$. By   Lemma \ref{lem: bounded edges of E(H',V-H')}, there is $A\subseteq V(H')$ with $|A|=\ell'$ such that $|N(A)|\ge n'$.  Since $|D_{>c}|\le n'/2$ and $|N(A)|\ge n'$, there is $A'\subseteq N(A)\cap D_{\le c}$ with $|A'|=h'-\ell'$  such that there is a copy of $H'$ in $G[A\cup A']$.
We call a subgraph $G[A\cup A']$ {\em pseudo-bipartite graph} if $|A|=\ell'$, $|A'|=h'-\ell'$ and $A'\subseteq N(A)\cap D_{\le c}$.
\begin{lemma}\label{lem: if a copy H' contains many small degree vertices, then l' vertices have large degree}
If $G[A\cup A']$ is a pseudo-bipartite graph, then $d(v)\ge n-c'$ for all $v\in A$, where $c'=6\ell^2+\ell'c$.
\end{lemma}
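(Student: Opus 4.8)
The plan is to argue by contradiction: suppose some $v_0\in A$ has $d(v_0)<n-c'$, so that $v_0$ misses more than $c'$ vertices. The feature of $A$ I would use is the one produced by the pigeonhole step in the proof of Lemma~\ref{lem: bounded edges of E(H',V-H')}: the $\ell'$ vertices of $A$ have a \emph{common} neighbourhood $N^*$ of size at least $n'$, which dwarfs $h=v(H)$. The pivotal reduction is the claim that $G[V(G)\setminus A]$ is $H_1$-free. Indeed, if $G[V(G)\setminus A]$ contained a copy of $H_1=P_{v(H_1)}$ on a vertex set $S$ with $|S|\le 2\ell_1+1$, I would re-embed $H'$ using $A$ as the set of centres and fresh leaves drawn from $N^*\setminus S$: each path $H_i$ of $H'$ is realised as an alternating centre--leaf walk using exactly $\ell_i$ centres, every centre--leaf pair being an edge since $N^*$ consists of common neighbours of $A$. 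As $|A|=\ell'=\sum_{i\ge 2}\ell_i$ is precisely the number of centres needed and $|N^*\setminus S|\ge n'-(2\ell_1+1)\gg h'$, there is ample room, and the resulting copy of $H'$ is disjoint from $S$. Together with $H_1$ this gives $H\subseteq G$, a contradiction.

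Next I would set up the count via the identity $\sum_{v\in A}d(v)=e(G)+e(G[A])-e(G[V(G)\setminus A])$. Using $e(G)\ge(\ell-1)(n-\ell+1)$ and $e(G[A])\ge 0$, it suffices to bound $e(G[V(G)\setminus A])$ from above. When $H_1$ is \emph{even}, $H_1=P_{2\ell_1}$, the claim says $G[V(G)\setminus A]$ is $P_{2\ell_1}$-free, so Theorem~\ref{G} gives $e(G[V(G)\setminus A])\le(\ell_1-1)(n-\ell')$. Substituting yields $\sum_{v\in A}d(v)\ge \ell' n-C$ with a constant $C=(\ell-1)^2-(\ell_1-1)\ell'\le 6\ell^2\le c'$. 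Since each of the $\ell'$ terms is at most $n-1$, a single $v_0$ with $d(v_0)<n-c'$ would force $\sum_{v\in A}d(v)<\ell' n-c'\le \ell' n-C$, a contradiction. This disposes of the even case with room to spare.

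The main obstacle is the \emph{odd} case, $H_1=P_{2\ell_1+1}$, where Theorem~\ref{G} only yields $e(G[V(G)\setminus A])\le(\ell_1-\frac12)(n-\ell')$, exceeding the target by order $n$; this deficit is exactly what the term $\ell' c$ in $c'$ must absorb. To close it I would exploit that an odd path can be assembled cheaply from a dense piece: a copy of $P_{2\ell_1}$ inside $G[V(G)\setminus A]$ extends to $H_1=P_{2\ell_1+1}$ by prepending a single centre $a\in A$ adjacent to one of its endpoints (centres have degree $\ge n'$, so such an $a$ exists after deleting the bounded set already committed). The crux, using $\ell_k\ge 2$, is that \emph{two} disjoint such pieces, realising $P_{2\ell_1}$ and $P_{2\ell_k}$ in $G[V(G)\setminus A]$ and absorbed by two distinct centres, free up enough centres that the remaining ones together with $N^*$-leaves realise the other components of $H$; a short budget check ($1+[0\text{ or }1]+(\ell'-\ell_k)\le\ell'$) shows the whole of $H$ is reconstructed, which is forbidden. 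Hence $G[V(G)\setminus A]$ cannot contain two disjoint dense pieces of these lengths attached to centres, which forces its edge count down from the Erd\H{o}s--Gallai clique bound to $(\ell_1-1)(n-\ell')+O(c)$. Feeding this sharper bound into the same degree sum gives $\sum_{v\in A}d(v)\ge \ell' n-c'$, and the contradiction follows as before.

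I expect the delicate points to be two. First, verifying that the centre--leaf re-embedding of $H'$ always has room and that the chosen centres can be attached to the endpoints of the dense pieces while keeping everything disjoint; this is routine given $|N^*|\ge n'$ but must be tracked carefully. Second, and harder, making the odd-case bookkeeping precise: I must quantify the ``bounded exceptional set'' of vertices in $G[V(G)\setminus A]$ that are unattachable to any centre, so that the loss in passing from the Erd\H{o}s--Gallai bound to $(\ell_1-1)(n-\ell')$ is genuinely $O(c)$ rather than growing with $n$. The constant $c'=6\ell^2+\ell'c$ is calibrated exactly to swallow both the Erd\H{o}s--Gallai constants (the $6\ell^2$) and this odd-path correction (the $\ell'c$).
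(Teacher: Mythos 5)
Your approach is genuinely different from the paper's, and it contains one hypothesis mismatch and one fatal gap. The paper's own proof is a three-line local count: by definition the pseudo-bipartite graph $G[A\cup A']$ carries a copy $H_A$ of $H'$ spanning $A\cup A'$; the first part of Lemma~\ref{lem: bounded edges of E(H',V-H')} applies to \emph{every} copy of $H'$ in $G$ and gives $e(V(H_A),V(G)\setminus V(H_A))\ge \ell'n-6\ell^2$; since $A'\subseteq D_{\le c}$ contributes at most $(h'-\ell')c$ of these edges and $A\setminus\{v\}$ at most $(\ell'-1)n$, the vertex $v$ must absorb the rest, i.e.\ $d(v)\ge n-c'$, with no parity distinction whatsoever. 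Your argument never uses the copy of $H'$ inside $G[A\cup A']$ nor the degree cap $c$ on $A'$; instead it hinges on $A$ having a \emph{common} neighbourhood $N^*$ of linear size $n'$. That property is not part of the pseudo-bipartite hypothesis, which supplies only the $h'-\ell'$ common low-degree neighbours $A'$ (so your $H_1$-freeness claim for $G[V(G)\setminus A]$ does not follow from the lemma's stated hypothesis: a copy of $H_1$ meeting $A'$ could not be avoided when re-embedding $H'$). Moreover the lemma is invoked later for the perturbed sets $A_y=(A\setminus\{x\})\cup\{y\}$; there a linear-size common neighbourhood does exist (via $N(y)\cap N(A)\cap D_{\le c}$), but your write-up would have to re-verify it in each application. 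Granting the extra assumption, your even case is sound: the identity $\sum_{v\in A}d(v)=e(G)+e(G[A])-e(G[V(G)\setminus A])$, Theorem~\ref{G} for $P_{2\ell_1}$, and the observation that each summand is at most $n-1$ do force every $d(v)\ge n-c'$.

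The fatal gap is the odd case. The step ``no two disjoint attached dense pieces forces $e(G[V(G)\setminus A])\le(\ell_1-1)(n-\ell')+O(c)$'' is asserted, not proved, and it is false as stated: a disjoint union of cliques $K_{2\ell_1}$ with \emph{no} edges to $A$ is $P_{2\ell_1+1}$-free and contains no piece attachable to any centre, yet carries $\ell_1-\frac12$ edges per vertex, which exceeds your target by $\Theta(n)$ if these cliques occupy a linear fraction of $V(G)\setminus A$. Such a configuration is compatible with $e(G)\ge(\ell-1)(n-\ell+1)$ for up to roughly a $\frac{1}{2\ell'}$ fraction of the vertices, because the $\frac12$-per-vertex surplus inside the cliques offsets the lost cross edges to $A$ in the count; your degree sum then yields only $\sum_{v\in A}d(v)\ge \ell'n-\Theta(n)$, and the averaging contradiction evaporates. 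Excluding this configuration requires a separate argument about $G$ itself (low-degree dense components depress $e(G)$ below the extremal value), not about the fixed set $A$ --- and you explicitly flag this bookkeeping (``quantify the bounded exceptional set'') as unresolved, so the proposal does not close. It is only fair to add that the parity difficulty you ran into is real: the displayed chain in the paper's Lemma~\ref{lem: bounded edges of E(H',V-H')} tacitly uses $ex(n,H_1)\le(\ell_1-1)n$, which Theorem~\ref{G} justifies only when $H_1$ is an even path; for odd $H_1$ the Erd\H{o}s--Gallai bound is $(\ell_1-\frac12)n$, and the constant $\ell'n-6\ell^2$ there needs exactly the kind of repair you were attempting.
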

\begin{proof} Let $H_A$ be a copy of $H'$ in $G[A\cup A']$.
   For any  $v\in A$, we have
    \begin{equation*}
        \begin{aligned}
            d(v)= & e(V(H_A),V(G)\setminus V(H_A))-e(A',V(G)\setminus V(H_A))-e(A\setminus\{v\},V(G))\\
            \geq &\ell'n-6\ell^2-(h'-\ell')c-(\ell'-1)n\\
            \geq & n-6\ell^2-\ell'c=n-c'.
        \end{aligned}
    \end{equation*}
    The second inequality follows from Lemma \ref{lem: bounded edges of E(H',V-H')}.
\end{proof}
By Lemma \ref{lem: if a copy H' contains many small degree vertices, then l' vertices have large degree}, $|N(A)|\ge n-\ell'c'$. 
 Let $B=\{v\notin A:d(v)\ge \frac{n-3\ell^2}{2(\ell_k+1)}\}$. Assume $B\not=\emptyset$. For any $y\in B$, we have
$$|N(y)\cap N(A)\cap D_{\le c}|\ge \frac{n-3\ell^2}{2(\ell_k+1)}-\frac{n'}{2}\geq \frac{n-3\ell^2}{8(\ell_k+1)}>\ell'c'+h'.$$Let $x\in A$ and $y\in B$. Denote $A_y=(A\setminus\{x\})\cup \{y\}$. Since $|N(y)\cap N(A)\cap D_{\le c}|>\ell'c'+h'$, there is $A_y'\subseteq  N(y)\cap N(A)\cap D_{\le c}$ with $|A_y'|=h'-\ell'$ such that $G[A_y\cup A_y']$ is a pseudo-bipartite graph. By Lemma \ref{lem: if a copy H' contains many small degree vertices, then l' vertices have large degree}, $d(y)\ge n-c'$.

We claim that $|B|\leq \ell_1-1$.
 Suppose $|B|=s\geq \ell_1$. Since $d(y)\ge n-c'$ for any $y\in B$, $|N(B)|\ge n-sc'$. So there is $B'\subseteq N(B)\setminus (A'\cup A)$ such that $|B'|= \ell_1+1$. Thus there is a copy of $H$ in $G[A\cup B\cup A'\cup B']$, a contradiction.

Now we have  that $ \ell_k\le |A\cup B|\leq \ell-1$ and for any  $y\in A\cup B$, $d(y)\ge n-c'$.

\begin{lemma}\label{lemma: no H_1 contains l1-1 vertices in AUB}For $1\le i\le k$,
    $|V(H_i)\cap (A\cup B)|\ge \ell_i$.
\end{lemma}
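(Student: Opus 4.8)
The plan is to argue by contradiction. Suppose some copy of $H_i$ in $G$, call it $P$, satisfies $|V(P)\cap(A\cup B)|\le \ell_i-1$; I will assemble from $P$ a full copy of $H$ inside $G$, contradicting that $G$ is $\{H,F\}$-free. Throughout I use the two facts already in hand: every vertex of $A\cup B$ is \emph{near-universal}, that is $d(y)\ge n-c'$ for all $y\in A\cup B$, and $\ell-\ell_1\le|A\cup B|\le \ell-1$ (the lower bound because $A\subseteq A\cup B$ and $|A|=\ell'=\ell-\ell_1$).

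First I record the basic building block. Because each $w\in A\cup B$ misses at most $c'$ vertices while $n$ is enormous, a prescribed set of $t$ core vertices can be ``fleshed out'' into a copy of $P_{2t}$ or $P_{2t+1}$ by interleaving freshly chosen outside vertices: one selects the outside vertices one at a time from the common neighbourhood of the relevant core vertices, discarding the $O(\ell)$ vertices already committed. Consequently each component $H_j$ can be realised on exactly $\ell_j$ core vertices together with $v(H_j)-\ell_j$ outside vertices, chosen disjointly from any previously fixed bounded set.

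Now keep $P$ fixed as the copy of $H_i$ and set $W=V(P)\cap(A\cup B)$, so $|W|\le \ell_i-1$. The remaining $k-1$ components $\{H_j:j\ne i\}$ need $\sum_{j\ne i}\ell_j=\ell-\ell_i$ core vertices in total; I build them greedily on core vertices drawn from $(A\cup B)\setminus W$ and on outside vertices avoiding $V(P)$ and one another, which is possible by the building block since $n$ is large. The number of core vertices consumed is $|W|+(\ell-\ell_i)\le(\ell_i-1)+(\ell-\ell_i)=\ell-1$. As the copies are pairwise disjoint, their union is a copy of $H$ in $G$, the required contradiction. Equivalently, one can keep the copy of $H'=H_2\cup\cdots\cup H_k$ supported on $A$ that underlies Lemma~\ref{lem: bounded edges of E(H',V-H')} and re-embed its outside part to dodge $P$, which organises the very same count.

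The delicate point, and the main obstacle, is the vertex accounting on the core: the construction requires $\ell-1$ \emph{distinct} core vertices disjoint in the right way, so it closes precisely when $|A\cup B|$ attains its maximum value $\ell-1$. The a priori bound only gives $|A\cup B|\ge \ell-\ell_1$, which is enough immediately when $\ell_1=1$ but leaves a shortfall of $\ell_1-1$ core vertices when $\ell_1\ge 2$. Closing this gap is where care is needed: either one first upgrades $|A\cup B|$ to equal $\ell-1$ (the edge lower bound $e(G)\ge(\ell-1)(n-\ell+1)$ forces this, since a strictly smaller near-universal core cannot carry that many edges once the part outside $A\cup B$ is shown to be sparse), or one exploits the efficient outside structure witnessed by $P$ to build the deficient components without spending additional core vertices. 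By contrast, the near-universality $d(y)\ge n-c'$ makes the disjoint placement of the many outside vertices completely routine, so all the real work sits in the core count.
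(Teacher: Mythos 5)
Your approach (manufacture a copy of $H$ by keeping the deficient copy $P$ of $H_i$ and re-embedding the other $k-1$ components on cores in $(A\cup B)\setminus V(P)$) is genuinely different from the paper's, which never embeds anything: the paper observes that $G[V(G)\setminus V(H_i)]$ is $H^{(i)}$-free for $H^{(i)}=\cup_{j\ne i}H_j$, so Theorem \ref{thm: old linear forest} forces $e(V(H_i),V(G)\setminus V(H_i))\ge \ell_i n-3\ell^2$, while a copy of $H_i$ with at most $\ell_i-1$ vertices in $A\cup B$ has its remaining $v(H_i)-\ell_i+1$ vertices of degree below the $B$-threshold $\frac{n-3\ell^2}{2(\ell_k+1)}$, so it can emit at most $(v(H_i)-\ell_i+1)\bigl(\frac{n-3\ell^2}{2(\ell_k+1)}-1\bigr)+(\ell_i-1)n<\ell_i n-3\ell^2$ edges, a contradiction. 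Your proposal, by contrast, has a genuine gap which you flag but do not close: the core count only balances when $|A\cup B|=\ell-1$, whereas at this stage of the development only $\ell-\ell_1\le|A\cup B|\le\ell-1$ is available, leaving a shortfall of up to $\ell_1-1$ cores whenever $\ell_1\ge 2$. Your argument is complete only in the case $\ell_1=1$.

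Neither of your two suggested repairs works as stated. Upgrading first to $|A\cup B|=\ell-1$ is circular within the paper's architecture: that equality is Lemma \ref{lemma: common neighbours}, whose proof needs the sparsity of $G$ outside $A\cup B$ supplied by Lemmas \ref{lem: even, no D E} and \ref{lem: no P3 containing vertex in D E}, and both of those are proved by invoking the very lemma you are trying to establish. Nor is an independent sparsity bound ``routine'': vertices outside $A\cup B$ may have degree up to roughly $\frac{n}{2(\ell_k+1)}$, and the one cheap structural fact available (the subgraph of $G$ outside $A\cup B$ is $H_1$-free, so Theorem \ref{G} applies) yields only $e(G)\le tn+\binom{t}{2}+\bigl(\ell_1-\frac{1}{2}\bigr)n$ for $t=|A\cup B|$, which recovers $t\ge\ell-\ell_1$ and nothing stronger. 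The second repair (``exploit the efficient outside structure witnessed by $P$'') is also unsubstantiated: a pigeonhole count does produce an edge of $P$ with both endpoints outside $A\cup B$, but that edge lies inside $P$ itself and so cannot be recycled to build the other components, and nothing guarantees any outside edge disjoint from $P$. The missing idea in the case $\ell_1\ge 2$ is precisely the paper's counting step, which sidesteps the core bookkeeping altogether.
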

\begin{proof}
   Let $1\le i\le k$ and $H^{(i)}=H[\cup_{j=1}^k V(H_j)\setminus V(H_i)]$. By Lemma \ref{thm: old linear forest}, we have 
    \begin{equation*}
        \begin{aligned}
            &e(V(H_i),V(G)\setminus V(H_i))=e(G)-e(G[V(H_i)])-ex(n-v(H_i),H^{(i)})\\
            \geq & (\ell-1)(n-\ell+1)-{v(H_i)\choose 2}-(n-(\ell-\ell_i)-v(H_i)+1)(\ell-\ell_i-1)-{\ell-\ell_i-1\choose 2}\\
            \geq & \ell_i n-3\ell^2.
        \end{aligned}
    \end{equation*}
    Suppose there is $1\le i\le k$ such that $|V(H_i)\cap (A\cup B)|\le \ell_i-1$. Then
    \begin{equation*}
        \begin{aligned}
    e(V(H_i),V(G)\setminus V(H_i))\le (v(H_i)-\ell_i+1)\left(\frac{n-3\ell^2}{2(\ell_k+1)}-1\right)+(\ell_i-1)n<\ell_i n-3\ell^2,
    \end{aligned}
    \end{equation*} a contradiction.
\end{proof}

Let
\begin{equation*}
    \begin{aligned}
        D_0=&\{v\in V(G)\setminus (A\cup B): d_{A\cup B}(v)=|A\cup B|\},\\
        D_1=&\left\{v\in V(G)\setminus(A\cup B): d_{A\cup B}(v)\geq \max\left\{\frac{\ell+\ell_1-2}{2},\ell_k\right\}\right\},\\
        D_2=&\left\{v\in V(G\setminus(A\cup B): \ell_k\leq d_{A\cup B}(v)\leq  \frac{\ell+\ell_1-4}{2} \right\},\\
        D_3=&\{v\in V(G)\setminus(A\cup B): d_{A\cup B}(v)\leq \ell_k-1 \}.\\
    \end{aligned}
\end{equation*}
Notice that  $D_2=\emptyset$ if $\ell_k>\frac{\ell+\ell_1-4}{2}$ and $D_0\subseteq D_1$. Since there is a copy of $H'$ in $G[A\cup N(A)]$ and $|N(A)|\ge n'$,
$G[D_3]$ is $H_1$-free by $G$ being $H$-free. By Theorem \ref{G}, $e(G[D_3])\leq \frac{v(H_1)-2}{2}|D_3|$.

\begin{lemma}\label{lem: even, no D E}
    If $I_1\neq \emptyset$, then $e(G[D_1\cup D_2])=0$ and $e(D_1\cup D_2,D_3)=0$.
\end{lemma}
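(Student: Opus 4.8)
The plan is to argue by contradiction: assuming some edge of $G$ has an endpoint in $D_1\cup D_2$, I will exhibit a copy of $H$ in $G$, contradicting $H$-freeness. Since every edge counted by $e(G[D_1\cup D_2])$ and every edge counted by $e(D_1\cup D_2,D_3)$ has at least one endpoint in $D_1\cup D_2$, it suffices to treat a single configuration: an edge $uv$ with $u\in D_1\cup D_2$ and $v\in V(G)\setminus(A\cup B)$. The governing principle is that $A\cup B$ behaves like the small side of a complete bipartite graph: every vertex of $A\cup B$ has degree at least $n-c'$, hence misses at most $c'$ vertices of $G$, while $|A\cup B|=\ell-1$. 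In such a near-bipartite regime each component $H_i=P_{v(H_i)}$ must spend at least $\ell_i=\lfloor v(H_i)/2\rfloor$ of its vertices inside $A\cup B$, and the total demand $\sum_i\ell_i=\ell$ exceeds $|A\cup B|=\ell-1$; this is precisely why $K_{\ell-1,n-\ell+1}$ is $H$-free. The single edge $uv$ outside $A\cup B$ will let me economize one vertex of $A\cup B$ on one component, dropping the demand to $\ell-1$ and thereby building $H$.

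The crux is that such economy is possible only on an \emph{even} component, and $I_1\neq\emptyset$ guarantees one exists. Fix $i_0\in I_1$, so $H_{i_0}=P_{2\ell_{i_0}}$ with $\ell_{i_0}\ge 2$, and route it as $v\,u\,s_1\,t_1\,s_2\,t_2\cdots s_{\ell_{i_0}-1}\,t_{\ell_{i_0}-1}$, where $v$ is an endpoint, the $s_p$ lie in $A\cup B$, and $u,v,t_1,\dots,t_{\ell_{i_0}-1}$ lie outside. This path uses the edge $uv$, and the only edge in it joining $u$ to $A\cup B$ is $u s_1$ (every other edge is incident to some near-universal $s_p$); since $u\in D_1\cup D_2$ has at least $\ell_k\ge 1$ neighbours in $A\cup B$, a suitable $s_1$ exists, and $v$ --- whether it lies in $D_1\cup D_2$ or in $D_3$, possibly with no neighbour in $A\cup B$ --- only needs to be joined to $u$. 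Thus $H_{i_0}$ occupies merely $\ell_{i_0}-1$ vertices of $A\cup B$. The same economy is impossible for an odd component: routing $P_{2\ell_j+1}$ so as to use only $\ell_j-1$ vertices of $A\cup B$ together with the edge $uv$ produces a path on just $2\ell_j$ vertices, and with only one non-bipartite edge available the demand of an odd component cannot drop below $\ell_j$. This parity asymmetry is exactly why the hypothesis $I_1\neq\emptyset$ is needed.

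It remains to embed the other components disjointly. Each $H_i$ with $i\neq i_0$ is placed in the standard alternating fashion, spending exactly $\ell_i$ vertices of $A\cup B$ (with every path-edge incident to some $s$-vertex), and the rest outside. Summing, the number of $A\cup B$-vertices used is $(\ell_{i_0}-1)+\sum_{i\neq i_0}\ell_i=\ell-1=|A\cup B|$, so there are exactly enough. Because each vertex of $A\cup B$ misses at most $c'$ vertices and the whole copy of $H$ uses at most $h=v(H)$ vertices outside $A\cup B$, the outside vertices $v,t_1,\dots$ and those of the other components can be selected greedily, one at a time: at each step the vertex required (an outside vertex adjacent to one or two prescribed, near-universal vertices of $A\cup B$) has at least $n-2c'-h$ admissible choices, and $n$ is large. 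This yields $k$ vertex-disjoint paths forming $H\subseteq G$, a contradiction, so $e(G[D_1\cup D_2])=0$ and $e(D_1\cup D_2,D_3)=0$.

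The main obstacle is the simultaneous disjoint embedding rather than any single estimate: one must allocate the $\ell-1$ vertices of $A\cup B$ among the $k$ components so that the chosen neighbour $s_1$ of $u$ is reserved for $H_{i_0}$, place the high-degree endpoint $u$ internally (so that $v$ may be a vertex with few or no neighbours in $A\cup B$), and then confirm that near-universality of $A\cup B$ together with the abundance of outside vertices makes every remaining adjacency requirement satisfiable without collisions. Once this bipartite-style vertex accounting and the parity computation for the even component are in place, the contradiction follows.
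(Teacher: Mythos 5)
Your reduction to a single edge $uv$ with $u\in D_1\cup D_2$ and $v\notin A\cup B$, and your routing of the even component as $v\,u\,s_1\,t_1\cdots s_{\ell_{i_0}-1}\,t_{\ell_{i_0}-1}$ so that it spends only $\ell_{i_0}-1$ vertices of $A\cup B$, are sound and coincide with the paper's construction. The genuine gap is your use of $|A\cup B|=\ell-1$. At the point where this lemma sits, the paper has only established $\ell_k\le |A\cup B|\le \ell-1$; the equality $|A\cup B|=\ell-1$ is Lemma \ref{lemma: common neighbours}, which comes \emph{after} this lemma and whose proof in the case $I_1\neq\emptyset$ invokes precisely the statement you are proving, via the estimate (\ref{eq: even}). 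So your argument is circular in the paper's logical order, and the dependence is not cosmetic: your contradiction comes from embedding \emph{all} of $H$, which consumes $(\ell_{i_0}-1)+\sum_{i\neq i_0}\ell_i=\ell-1$ vertices of $A\cup B$. If $|A\cup B|<\ell-1$ --- which cannot be ruled out at this stage --- the simultaneous embedding you describe cannot be completed and no contradiction is reached.

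The repair is to stop after building the single path. The paper performs exactly your first step (a copy of $H_i=P_{2\ell_i}$, $i\in I_1$, with the $D_1\cup D_2$-endpoint of the edge in second position, the other endpoint of $uv$ as a leaf, and only $\ell_i-1$ vertices in $A\cup B$; this exists since $d_{A\cup B}(u)\ge \ell_k\ge \ell_i$ and every $y\in A\cup B$ satisfies $d(y)\ge n-c'$), and then contradicts Lemma \ref{lemma: no H_1 contains l1-1 vertices in AUB}, the edge-counting statement that \emph{every} copy of $H_i$ in $G$ meets $A\cup B$ in at least $\ell_i$ vertices. That lemma is already available, holds for any value of $|A\cup B|\ge \ell_k$, and makes the global allocation you flag as ``the main obstacle'' unnecessary: no disjoint embedding of the remaining $k-1$ components is ever needed.
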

\begin{proof} Let $i\in I_1$.
    Suppose there is $uv\in E(G)$ such that $v\in D_1\cup D_2$. Then $d_{A\cup B}(v)\geq \ell_k\geq \ell_i$. Since $d(y)\ge n-c'$ for any $y\in A\cup B$, there are $S\subseteq A\cup B$ and $S'\subseteq V(G)\setminus(A\cup B \cup \{u,v\})$ such that $|S|=\ell_i-1$, $|S'|=\ell_i$ and $S'\subseteq N(S)$. Then $G[\{u,v\}\cup S\cup S']$ contains a copy of $H_i$ with $u$ as an end vertex and $|V(H_i)\cap (A\cup B)|=\ell_i-1$, a contraction with Lemma \ref{lemma: no H_1 contains l1-1 vertices in AUB}.
\end{proof}
\begin{lemma}\label{lem: no P3 containing vertex in D E}
    If $I_1=\emptyset$, then  $d_{D_1\cup D_2}(y)\leq 1$ for every $y\in D_3$ and $\Delta(G[D_1\cup D_2])\leq 1$.
\end{lemma}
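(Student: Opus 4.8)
The plan is to prove both assertions at once by ruling out a single configuration. Observe that if some $y\in D_3$ had two neighbours $v_1,v_2\in D_1\cup D_2$, then $v_1yv_2$ is a copy of $P_3$ whose two endpoints lie in $D_1\cup D_2$ and whose middle vertex $y$ lies in $V(G)\setminus(A\cup B)$; likewise, if some $w\in D_1\cup D_2$ had two neighbours $u_1,u_2\in D_1\cup D_2$, then $u_1wu_2$ is a $P_3$ of the same type. Since $D_1,D_2,D_3\subseteq V(G)\setminus(A\cup B)$, it therefore suffices to show that $G$ contains no path $x_1x_2x_3$ with $x_1,x_3\in D_1\cup D_2$ and $x_2\in V(G)\setminus(A\cup B)$.

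Suppose such a path exists. Since $I_1=\emptyset$, the longest component of $H$ is the odd path $H_k=P_{2\ell_k+1}$, and the goal is to embed a copy of $H_k$ meeting $A\cup B$ in only $\ell_k-1$ vertices, contradicting Lemma \ref{lemma: no H_1 contains l1-1 vertices in AUB} applied with $i=k$. The argument uses three facts already established: every vertex of $A\cup B$ has degree at least $n-c'$ and is thus adjacent to all but at most $c'$ vertices; $|A\cup B|\ge \ell_k$; and the endpoint $x_3\in D_1\cup D_2$ satisfies $d_{A\cup B}(x_3)\ge \ell_k$ (recall $\ell_k\ge 2$ as $H\neq kP_3$).

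Starting from the given $P_3$, I would greedily grow the alternating path $x_1\,x_2\,x_3\,a_1\,t_1\,a_2\,t_2\cdots a_{\ell_k-1}\,t_{\ell_k-1}$, choosing distinct $a_1,\dots,a_{\ell_k-1}\in A\cup B$ with $a_1\in N_{A\cup B}(x_3)$, and distinct $t_1,\dots,t_{\ell_k-1}\notin A\cup B$ avoiding all previously used vertices. The edge $x_3a_1$ exists because $d_{A\cup B}(x_3)\ge\ell_k\ge 1$; each edge $a_it_i$ and $t_ia_{i+1}$ exists because $a_i,a_{i+1}\in A\cup B$ each miss at most $c'$ vertices, so at every step only a bounded set (the non-neighbours of the relevant $a_i$'s, the already-chosen vertices, and $A\cup B$ itself) must be avoided, which is possible since $n$ is large. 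The resulting path has $3+2(\ell_k-1)=2\ell_k+1$ vertices, hence is a copy of $H_k$, and its only vertices in $A\cup B$ are $a_1,\dots,a_{\ell_k-1}$, so $|V(H_k)\cap(A\cup B)|=\ell_k-1<\ell_k$ --- the desired contradiction.

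The only delicate point is the greedy construction, and it is routine: $|A\cup B|\le \ell-1$ and $c'$ are bounded while $n$ is large, so the bounded avoidance sets never exhaust the candidates for the $t_i$, and $\ell_k-1<|A\cup B|$ leaves room for the $a_i$. It is instructive to see where parity enters: three consecutive non-$(A\cup B)$ vertices $x_1x_2x_3$ save exactly one vertex of $A\cup B$ relative to a fully alternating realisation of an odd path, so a single edge inside $D_1\cup D_2$, or from $D_1\cup D_2$ to $D_3$, is harmless, whereas a $P_3$ is not. This is precisely why the conclusion here ($\Delta(G[D_1\cup D_2])\le 1$ and $d_{D_1\cup D_2}(y)\le 1$) is weaker than the vanishing conclusion of Lemma \ref{lem: even, no D E} in the even case.
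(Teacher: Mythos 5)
Your proof is correct and follows essentially the same route as the paper: the paper also reduces both assertions to a $P_3$ ending in $D_1\cup D_2$ inside $V(G)\setminus(A\cup B)$ and then, ``by the same argument as Lemma~\ref{lem: even, no D E},'' extends it through high-degree vertices of $A\cup B$ (each missing at most $c'$ vertices) into a copy of $H_k=P_{2\ell_k+1}$ meeting $A\cup B$ in only $\ell_k-1$ vertices, contradicting Lemma~\ref{lemma: no H_1 contains l1-1 vertices in AUB}. Your write-up merely makes explicit the greedy alternating construction and the parity bookkeeping that the paper leaves implicit.
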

\begin{proof} Suppose  there is a vertex $w\in D_1\cup D_2$ such that $w$ is an end vertex of a $P_3$, where $V(P_3)\subseteq D_1\cup D_2\cup D_3$. Then $d_{A\cup B}(w)\geq \ell_k$. By the same argument as that the proof of Lemma \ref{lem: even, no D E}, there is a copy of $H_k$ with only $\ell_k-1$ vertices in $A\cup B$, a contradiction with Lemma \ref{lemma: no H_1 contains l1-1 vertices in AUB}.
\end{proof}

\begin{lemma}\label{lemma: common neighbours}
    We have $|A\cup B|=\ell-1$ whether $I_1\neq \emptyset$ or $I_1= \emptyset$.
\end{lemma}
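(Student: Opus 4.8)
The plan is to prove the lower bound $|A\cup B|\ge \ell-1$, since the upper bound $|A\cup B|\le \ell-1$ has already been established. Write $a=|A\cup B|$ and suppose for contradiction that $a\le \ell-2$. I would then bound $e(G)$ from above and contradict the lower bound $e(G)\ge(\ell-1)(n-\ell+1)$ coming from the $\{H,F\}$-free graph $K_{\ell-1,n-\ell+1}$. The natural device is the partition $V(G)=(A\cup B)\cup D_1\cup D_2\cup D_3$ already set up, together with $e(G[A\cup B])\le\binom{a}{2}$ and the per-class bounds on degrees into $A\cup B$: every $v\in D_1$ has $d_{A\cup B}(v)\le a$, every $v\in D_2$ has $d_{A\cup B}(v)\le\frac{\ell+\ell_1-4}{2}$, and every $v\in D_3$ has $d_{A\cup B}(v)\le\ell_k-1$.

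The key structural observation to record first is that the definitions of $D_1$ and $D_2$ both require $d_{A\cup B}(v)\ge\ell_k$. Hence if $a<\ell_k$ then $D_1=D_2=\emptyset$ and every vertex outside $A\cup B$ lies in $D_3$; while if $D_1\cup D_2\ne\emptyset$ then $a\ge\ell_k$, which combined with $a\le\ell-2$ forces $\ell_k\le\ell-2$. This dichotomy is exactly what keeps every coefficient in the forthcoming edge count strictly below $\ell-1$. Next I would bound the edges inside $V(G)\setminus(A\cup B)$ using the two structural lemmas. When $I_1\ne\emptyset$, Lemma \ref{lem: even, no D E} gives $e(G[D_1\cup D_2])=0$ and $e(D_1\cup D_2,D_3)=0$, so the only internal edges sit inside $D_3$ and are controlled by $e(G[D_3])\le\frac{v(H_1)-2}{2}|D_3|$ via Theorem \ref{G}. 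When $I_1=\emptyset$, Lemma \ref{lem: no P3 containing vertex in D E} forces $G[D_1\cup D_2]$ to be a matching and each vertex of $D_3$ to send at most one edge into $D_1\cup D_2$; moreover, since no vertex of $D_1\cup D_2$ is an endpoint of a $P_3$ in $G[V(G)\setminus(A\cup B)]$, any $D_3$-vertex joined to $D_1\cup D_2$ has rest-degree exactly $1$ and so contributes nothing inside $D_3$. Splitting $D_3$ accordingly and assigning each internal edge to one endpoint collects a clean total coefficient for each vertex.

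Combining the bipartite degrees with these internal contributions, I would check that the coefficient of each $D_1$-vertex is at most $a+\tfrac12\le\ell-\tfrac32$, of each $D_2$-vertex at most $\frac{\ell+\ell_1-3}{2}\le\ell-\tfrac32$, of a $D_3$-vertex carrying an edge to $D_1\cup D_2$ at most $\ell_k\le\ell-2$ (here the dichotomy is used, since such edges exist only when $\ell_k\le\ell-2$), and of a purely-$D_3$ vertex at most $(\ell_k-1)+\frac{v(H_1)-2}{2}\le\ell_k+\ell_1-\tfrac32\le\ell-\tfrac32$ using $\ell_k+\ell_1\le\ell$. Thus every coefficient is at most $\ell-\tfrac32$, whence $e(G)\le\binom{a}{2}+\left(\ell-\tfrac32\right)(n-a)<(\ell-1)(n-\ell+1)$ once $n$ is large, the desired contradiction; this forces $a=\ell-1$.

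The hard part will be the case $I_1=\emptyset$: one must rule out a $D_3$-vertex that simultaneously maximises its degree into $A\cup B$ (up to $\ell_k-1$) and its incidence to a star-centre in $D_1\cup D_2$, which naively would push its coefficient up to $\ell_k$ and could tie the leading constant $\ell-1$ when $\ell_k=\ell-1$ (e.g.\ $H=P_3\cup P_{2\ell_2+1}$). The resolution I would emphasise is precisely the observation above: star-centres live in $D_1\cup D_2$, which is nonempty only when $a\ge\ell_k$, and then $\ell_k\le a\le\ell-2$, so every such $D_3$-vertex still has coefficient $\le\ell_k\le\ell-2<\ell-1$. Everything else is a routine comparison of constants against the linear term, valid for $n$ large.
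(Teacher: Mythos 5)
Your proposal is correct, and at the top level it is the same strategy as the paper: take the already-established bound $t=|A\cup B|\le \ell-1$ (from $|A|=\ell'$ and $|B|\le \ell_1-1$), assume $t\le \ell-2$, and refute it by a per-vertex edge count over the partition $D_0\subseteq D_1$, $D_2$, $D_3$ using Lemmas \ref{lem: even, no D E} and \ref{lem: no P3 containing vertex in D E} against the lower bound $e(G)\ge(\ell-1)(n-\ell+1)$; this is exactly the content of (\ref{eq: even}) and (\ref{eq: odd}). Where you genuinely differ is in the tight subcase $I_1=\emptyset$, $k=2$, and there your version is tighter than the paper's. In (\ref{eq: odd}) each $D_3$-vertex is charged up to $(\ell_k-1)+1+\tfrac{2\ell_1-1}{2}=\ell_k+\ell_1-\tfrac12$, which equals $\ell-\tfrac12>\ell-1$ when $\ell=\ell_1+\ell_k$; moreover the displayed coefficient $-(t-\ell_1-\ell_k+\tfrac12)|D_3|$ should be $-(t-\ell_1-\ell_k-\tfrac12)|D_3|$ once the $+|D_3|$ term for $e(D_3,D_1\cup D_2)$ is collected, so the paper's closing justification (``$|D_0|<n$ and $|D_1|+|D_2|+|D_3|<n$'') does not by itself exclude $t\le\ell-2$ if $|D_3|$ were large in that subcase. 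Your refinement closes precisely this gap: a $D_3$-vertex with a neighbour in $D_1\cup D_2$ can have no further neighbour in $D_1\cup D_2\cup D_3$, so it weighs $\le(\ell_k-1)+1=\ell_k\le t\le\ell-2$, while a pure $D_3$-vertex weighs $\le(\ell_k-1)+\tfrac{2\ell_1-1}{2}\le\ell-\tfrac32$; hence every coefficient is $\le\ell-\tfrac32$ and the contradiction follows for large $n$. Two small remarks: (a) the fact you invoke is the statement proved \emph{inside} the proof of Lemma \ref{lem: no P3 containing vertex in D E} (no vertex of $D_1\cup D_2$ is an endpoint of a $P_3$ contained in $D_1\cup D_2\cup D_3$), not its literal conclusion, so you should rerun that one-line argument (it uses Lemma \ref{lemma: no H_1 contains l1-1 vertices in AUB}) rather than cite the lemma as stated; (b) your dichotomy is actually automatic, since $t\ge|A|=\ell'\ge\ell_k$ always, so the assumption $t\le\ell-2$ alone yields $\ell_k\le\ell-2$. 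An alternative repair, used by neither you nor the paper, is to note that every $y\in A\cup B$ has $d(y)\ge n-c'$, whence $|D_1\cup D_2\cup D_3|\setminus D_0$ — in particular $|D_3|$ — is bounded by the constant $tc'$, making the problematic $|D_3|$ term negligible; your fix has the advantage of being purely local to the degree accounting.
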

\begin{proof}
We set $|A\cup B|=t$. Then $t\leq \ell-1$. We first consider the case  $I_1\neq \emptyset$. By Lemma \ref{lem: even, no D E},
     \begin{equation}\label{eq: even}
        \begin{aligned}
            e(G)-e(G[A\cup B])&\leq e(A\cup B,V(G)\setminus(A\cup B))+e(G[D_1\cup D_2])\\
            &+e(G[D_3])+e(D_3,V(G)\setminus D_3)\\
            \leq& t|D_0|+(t-1)(n-t-|D_3|-|D_0|)+\left(\ell_k-1+\frac{2\ell_1-1}{2}\right)|D_3| \\
            =& (t-1)n+|D_0|-t^2-\left(t-\ell_k-\ell_1+\frac{3}{2}\right)|D_3|.
        \end{aligned}
    \end{equation}Since $e(G)\ge (\ell-1)(n-\ell+1)$ and $|D_0|<n$, we have  $t=\ell-1$.

   Assume $I_1=\emptyset$.  By Lemma \ref{lem: no P3 containing vertex in D E},  we have
    \begin{equation}\label{eq: odd}
        \begin{aligned}
            &e(G)-e(G[A\cup B])\leq e(A\cup B,V(G)\setminus(A\cup B))+e(G[D_1\cup D_2])\\
            &+e(G[D_3])+e(D_3,V(G)\setminus D_3)\\
            \leq& t|D_0|+(t-1)(n-t-|D_3|-|D_0|)+\frac{1}{2}(|D_1|+|D_2|)\\
            +&(\ell_k-1)|D_3|+|D_3|+\frac{2\ell_1-1}{2}|D_3|+1\\
            =& |D_0|+(t-1)n-t^2+\frac{1}{2}(|D_1|+|D_2|)-\left(t-\ell_1-\ell_k+\frac{1}{2}\right)|D_3|+1.
        \end{aligned}
    \end{equation}
    Since $e(G)\ge (\ell-1)(n-\ell+1)$, $|D_0|<n$ and $|D_1|+|D_2|+|D_3|<n$, we have  $t=\ell-1$.
    \end{proof}
   \begin{lemma}
       If $k=2$ and $I_1=\emptyset$, then $e(G)-e(G[A\cup B])\leq (\ell-1)(n-\ell+1)+1$.
    \end{lemma}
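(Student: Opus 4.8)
The plan is to translate the statement into a single inequality about the edges inside $R:=V(G)\setminus(A\cup B)$ and then bound those edges by charging them against the missing edges from $A\cup B$ to $R$. By Lemma~\ref{lemma: common neighbours} we have $|A\cup B|=\ell-1$, and every vertex of $A\cup B$ has degree at least $n-c'$. Since each $v\in R$ has at most $\ell-1$ neighbours in $A\cup B$, writing $\mathrm{miss}:=\sum_{v\in R}\big((\ell-1)-d_{A\cup B}(v)\big)$ for the number of non-edges between $A\cup B$ and $R$, we get $e(A\cup B,R)=(\ell-1)|R|-\mathrm{miss}$ with $|R|=n-\ell+1$. Hence $e(G)-e(G[A\cup B])=e(A\cup B,R)+e(G[R])\le(\ell-1)(n-\ell+1)+1$ is \emph{equivalent} to $e(G[R])\le\mathrm{miss}+1$. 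So the entire task reduces to showing that $G[R]$ has at most one more edge than the total deficiency of its vertices toward $A\cup B$.

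The engine is an embedding principle that uses $k=2$, $I_1=\emptyset$ (both paths odd, $\ell_1<\ell_2$) crucially. To realize $H=P_{2\ell_1+1}\cup P_{2\ell_2+1}$ one uses the $\ell-1$ near-complete vertices of $A\cup B$ as alternating ``hubs''; an alternating embedding of $P_{2m+1}$ consumes $m$ hubs, so with no help from $E(G[R])$ one needs $\ell_1+\ell_2=\ell$ hubs and is exactly one short. However, inserting \emph{two} vertex-disjoint edges of $G[R]$ as the two end-segments of the long path $P_{2\ell_2+1}$ realizes it on only $\ell_2-1$ hubs, so together with an alternating $P_{2\ell_1+1}$ on $\ell_1$ fresh hubs this yields $H$ on exactly $\ell-1$ hubs, provided the inner endpoints of those two edges each have a neighbour in $A\cup B$. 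Applying this to two \emph{heavy} edges (edges of $G[R]$ with both endpoints in $D_1\cup D_2$, where $d_{A\cup B}\ge\ell_2\ge1$) would force $H\subseteq G$; since such edges form a matching by $\Delta(G[D_1\cup D_2])\le1$ (Lemma~\ref{lem: no P3 containing vertex in D E}), any two are disjoint, so $G$ being $H$-free gives $e(G[D_1\cup D_2])\le1$.

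With the heavy edges pinned to at most one, I finish by a clean charging. The edges of $G[R]$ split into heavy edges, \emph{mixed} edges (one endpoint in $D_1\cup D_2$, one in $D_3$), and \emph{light} edges (both endpoints in $D_3$). By Lemma~\ref{lem: no P3 containing vertex in D E}, the $D_3$-endpoint of a mixed edge is a leaf of $G[R]$ and every $v\in D_3$ lies on at most one mixed edge, so $D_3=D_3^{\mathrm{mix}}\sqcup D_3^{\mathrm{light}}$ and the mixed edges inject into $D_3^{\mathrm{mix}}$; since each $v\in D_3$ has $d_{A\cup B}(v)\le\ell_2-1$, its deficiency is at least $\ell-\ell_2=\ell_1\ge1$, which pays for its unique mixed edge. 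The light edges lie in the $P_{2\ell_1+1}$-free graph $G[D_3^{\mathrm{light}}]$, so Theorem~\ref{G} gives $e(G[D_3^{\mathrm{light}}])\le(\ell_1-\tfrac12)|D_3^{\mathrm{light}}|$, which is below the total $D_3^{\mathrm{light}}$-deficiency $\ge\ell_1|D_3^{\mathrm{light}}|$. Thus light plus mixed edges are at most the part of $\mathrm{miss}$ coming from $D_3$, while the at-most-one heavy edge supplies the lone $+1$; summing yields $e(G[R])\le\mathrm{miss}+1$, as required. (Note a zero-deficiency vertex of $D_0\subseteq D_1$ with many $R$-neighbours causes no trouble: each of its edges is heavy, of which there is at most one, or mixed, each charged to a distinct $D_3$ leaf.)

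The main obstacle is the embedding principle, i.e.\ verifying rigorously that ``one hub's worth of savings'' in $G[R]$ always completes a copy of $H$. This rests on the parity bookkeeping for odd paths—two internal edges save exactly one hub while one internal edge saves none—on choosing the long path $P_{2\ell_2+1}$ to absorb the savings, and on selecting the $\ell-1$ hubs and all alternating outer vertices disjointly. The near-completeness of $A\cup B$ and the abundance of vertices in $R$ make the greedy selection succeed, and the assumption $\ell_1<\ell_2$ ensures that the hubs bordering the two saving-segments can be chosen (in particular a common hub-neighbour of the two inner endpoints exists in the degenerate case $\ell_2=2$, since then $2\ell_2>\ell-1$).
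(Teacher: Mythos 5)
Your proof is correct, and while it shares the section's overall skeleton (structural claims restricting edges outside $A\cup B$, followed by counting), it genuinely departs from the paper at the decisive step. The paper introduces a refined partition $D_1',D_2',D_3'$ with thresholds $\max\{\frac{\ell+\ell_1-2}{2},\ell_2\}$ and $\ell_1$, proves its Claim 1 (no two independent edges in $G[D_1']$) via a pigeonhole on common neighbourhoods in $A\cup B$ and an embedding that places both saved edges on the \emph{short} path $P_{2\ell_1+1}$ (falling back to the long path only when $\ell_1=1$), and then absorbs the middle band $D_2'$ through one aggregate inequality (\ref{eq: k=2}) whose negative coefficients are checked using $\ell_1<\ell_2$. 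You instead keep the section's coarse partition $D_1,D_2,D_3$, reformulate the target as $e(G[R])\le \mathrm{miss}+1$ for $R=V(G)\setminus(A\cup B)$, and prove the stronger fact that already at threshold $\ell_2$ there is at most one edge inside $D_1\cup D_2$, by inserting two disjoint such edges as the end-segments of the \emph{long} path $P_{2\ell_2+1}$ on $\ell_2-1$ hubs; your embedding coincides with the paper's only in its $\ell_1=1$ case, which you correctly generalize, and your hub count $(\ell_2-1)+\ell_1=\ell-1$ and the $\ell_2=2$ degeneracy check are right. This buys a cleaner argument: no $\frac{\ell+\ell_1-2}{2}$ threshold, no common-neighbourhood pigeonhole, a transparent provenance for the $+1$, and local charging (mixed edges to $D_3$-endpoints with deficiency at least $\ell_1$, light edges by Erd\H{o}s--Gallai since $G[R]$ is $P_{2\ell_1+1}$-free) in place of the paper's global inequality. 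Either version supports the later Lemma \ref{claim: k=2,D=D0} equally well.

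One caveat: your assertion that the $D_3$-endpoint of a mixed edge is a leaf of $G[R]$ does not follow from the \emph{statement} of Lemma \ref{lem: no P3 containing vertex in D E}, which only gives $d_{D_1\cup D_2}(y)\le 1$ and $\Delta(G[D_1\cup D_2])\le 1$; a mixed-edge endpoint could a priori have further neighbours inside $D_3$, and your accounting genuinely needs this leaf property (otherwise light edges incident to $D_3^{\mathrm{mix}}$ go unpaid when $\ell_1=1$). What you need is the slightly stronger fact established in that lemma's \emph{proof}: no $P_3$ inside $G[V(G)\setminus(A\cup B)]$ has an end vertex in $D_1\cup D_2$, since such an end vertex $w$ has $d_{A\cup B}(w)\ge\ell_2$ and yields a copy of $H_2$ with only $\ell_2-1$ vertices in $A\cup B$, contradicting Lemma \ref{lemma: no H_1 contains l1-1 vertices in AUB}. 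With $u\in D_1\cup D_2$, $uy$ mixed and $yz$ light, the path $u y z$ is exactly such a $P_3$, so the leaf claim holds and your bound $e(G[R])\le 1+|D_3^{\mathrm{mix}}|+(\ell_1-\tfrac12)|D_3^{\mathrm{light}}|\le \mathrm{miss}+1$ goes through. This is a citation imprecision fixed by one line, not a gap in the mathematics.
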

    \begin{proof} In this case, $\ell=\ell_1+\ell_2$ and $\ell_1<\ell_2$. By Lemma \ref{lemma: common neighbours}, $|A\cup B|=\ell_1+\ell_2-1$.
      Since $I_1=\emptyset$, we have $I_2=\{1,2\}$.
     Set
     \begin{equation*}
         \begin{aligned}
             D_1'=&\left\{v\in V(G)\backslash(A\cup B): d_{A\cup B}(v)\geq \max\left\{\frac{\ell+\ell_1-2}{2},\ell_2\right\}\right\},\\
             D_2'=&\left\{v\in V(G)\backslash(A\cup B): \ell_1\leq d_{A\cup B}(v)<  \max\left\{\frac{\ell+\ell_1-2}{2},\ell_2\right\} \right\},\\
        D_3'=&\{v\in V(G)\backslash(A\cup B): d_{A\cup B}(v)\leq \ell_1-1 \}.\\
         \end{aligned}
     \end{equation*}Note that there is no copy of $H_1$ in $G[D_3']$. By Theorem \ref{G}, $e(G[D_3'])\leq \frac{v(H_1)-2}{2}|D_3'|$.
     We have the following claims.
     \vskip.2cm
    \noindent {\bf Claim 1.}
         There are no independent edges in $G[D_1']$ if $e(G[D_1'])\ge 2$.

   \noindent  {\bf Proof of Claim 1.}
         Suppose there are two independent edges $u_1v_1,u_2v_2$ in $G[D']$. Assume $\ell_1>1$. Then $|N(u_1)\cap N(u_2)|\ge \ell_1-1$. Since $H_{1}\not= P_3$, we can find a copy of $H_{1}$, say $H'_1$ contained $v_1,v_2$ as leaf vertices, such that $|V(H'_1)\cap (A\cup B)|=\ell_1-1$, a contradiction with Lemma  \ref{lemma: no H_1 contains l1-1 vertices in AUB}.

         Assume $\ell_1=1$. Then $\ell=\ell_2+1$ and $d_{A\cup B}(u_i)\geq \ell_2$ for $i=1,2$. So $|(N(u_1)\cap N(u_2))\cap (A\cup B)|\ge \ell_2-1$. Then we can find a copy of $H_2$ with $\ell_2-1$ vertices in $A\cup B$, a contradiction with Lemma \ref{lemma: no H_1 contains l1-1 vertices in AUB}.\q

     \vskip.2cm
     By the same argument  as the proof of Lemma \ref{lem: no P3 containing vertex in D E}, we have the following claim.

     \noindent {\bf Claim 2.} We have
         $d_{D_1'\cup D_2'}(y)\leq 1$ for every $y\in D_3'$ and $d_{G[D_1'\cup D_2']}(v)\leq 1$ for every  $v\in D_1'\cup D_2'$.
    \vskip.2cm
     By Claims 1 and 2, we have
     \begin{equation}\label{eq: k=2}
        \begin{aligned}
            e(G)-e(G[A\cup B])\leq&+ e(A\cup B,V(G)\setminus(A\cup B))+e(G[D_1'\cup D_2'])\\
            &+e(G[D_3'])+e(D_3',V(G)\setminus D_3')\\
            \leq& (\ell-1)(n-\ell-1-|D_2'|-|D_3'|)+\frac{1}{2}|D_2'|+\frac{\ell+\ell_1-4}{2}|D_2'|\\
            +&(\ell_1-1)|D_3'|+|D_3'|+\frac{2\ell_1-1}{2}|D_3'|+1\\
            =& (\ell-1)n-(\ell-1)^2-(\ell-\frac{\ell+\ell_1}{2}+\frac{1}{2})|D_2'|-(\ell-2\ell_1-\frac{1}{2})|D_3'|+1.
        \end{aligned}
    \end{equation}
    Since $\ell_1<\ell_2$, we have $\ell-2\ell_1-\frac{1}{2}\geq 0$ and $\ell-\frac{\ell+\ell_1}{2}+\frac{1}{2}\geq 0$. Thus  we have
    $$e(G)-e(G[A\cup B])\leq (\ell-1)(n-\ell+1)+1.$$
     \end{proof}
    Moreover, we have the following result.
    \begin{lemma}\label{claim: k=2,D=D0}
        If $k=2$ and $I_1=\emptyset$, then we may assume $|D_2'|=|D_3'|=0$ and $D_1'=D_0$.
    \end{lemma}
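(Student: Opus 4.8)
The plan is to prove this by a replacement argument: I would show that the chosen extremal $G$ can be replaced by a graph with the asserted clean structure (all outside vertices fully joined to $A\cup B$) without decreasing the number of edges and without introducing $H$ or $F$, so that the replacement is again extremal and we may simply work with it. The engine driving everything is that the two coefficients appearing in \eqref{eq: k=2} are strictly positive. Indeed, since $\ell=\ell_1+\ell_2$ and $\ell_1<\ell_2$, I would first record that $\ell-\frac{\ell+\ell_1}{2}+\frac12=\frac{\ell_2+1}{2}\ge\frac32$ and $\ell-2\ell_1-\frac12=\ell_2-\ell_1-\frac12\ge\frac12$; hence every vertex of $D_2'\cup D_3'$ forces a strict deficit of at least $\frac12$ in the bound for $e(G)-e(G[A\cup B])$.

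Next I would define the replacement $G^*$ as follows: keep $G[A\cup B]$ unchanged, delete all edges meeting $V(G)\setminus(A\cup B)$, and join every vertex of $V(G)\setminus(A\cup B)$ to all of $A\cup B$; finally, re-insert a single edge inside the outside part precisely when this keeps the graph $F$-free (equivalently, when $G[A\cup B]$ is $\mathscr{G}_2(F)$-free). By construction, in $G^*$ we have $D_2'=D_3'=\emptyset$ and $D_1'=D_0=V(G)\setminus(A\cup B)$. To see $e(G^*)\ge e(G)$, note that $G^*[A\cup B]=G[A\cup B]$ and $e(A\cup B,V(G)\setminus(A\cup B))=(\ell-1)(n-\ell+1)$ in $G^*$, so $e(G^*)=e(G[A\cup B])+(\ell-1)(n-\ell+1)+\varepsilon$ with $\varepsilon\in\{0,1\}$. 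Comparing with \eqref{eq: k=2}: if $D_2'\cup D_3'\neq\emptyset$, the deficit $\ge\frac12$ together with integrality of edge counts forces $e(G)\le e(G[A\cup B])+(\ell-1)(n-\ell+1)\le e(G^*)$; and if $D_2'=D_3'=\emptyset$, the comparison is immediate from Claims 1 and 2, which say there is at most one profitable internal edge, and $G^*$ retains it exactly when admissible.

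The substantive step is to verify that $G^*$ is $\{H,F\}$-free. For $H$, the outside part is independent apart from at most one edge and $|A\cup B|=\ell-1$, so any copy of $H$ would have to place at most $\ell-1$ vertices in $A\cup B$ while needing $\sum_i\ell_i=\ell$ of them; this contradicts the counting of Lemma \ref{lemma: no H_1 contains l1-1 vertices in AUB} applied to $G^*$ (whose $A\cup B$ and outside-independence are identical to those used there). For $F$, I would take a hypothetical copy of $F$ in $G^*$ and let $S$ be its vertex set outside $A\cup B$; since the outside part has maximum degree at most $1$ and carries at most one edge, the graph $G^*[V(F)\cap(A\cup B)]=F[V(F)\setminus S]$ lies in $\mathscr{G}_1(F)$ or $\mathscr{G}_2(F)$. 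But $G[A\cup B]=G^*[A\cup B]$ must already avoid $\mathscr{G}_1(F)$ (and $\mathscr{G}_2(F)$ whenever the edge is retained), because $G$ contains arbitrarily many fully joined, pairwise non-adjacent vertices in $D_0$; any forbidden member of these families inside $G[A\cup B]$ would combine with such vertices to produce a genuine copy of $F$ in $G$ itself, contradicting $F$-freeness of $G$. This yields the contradiction showing $G^*$ is $F$-free. Finally, since $e(G^*)\ge e(G)$ and $G$ is extremal, $e(G^*)=e(G)$, so replacing $G$ by $G^*$ we may assume $|D_2'|=|D_3'|=0$ and $D_1'=D_0$.

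The hard part is the simultaneous bookkeeping between (i) whether the single internal edge can be kept and (ii) the $F$-freeness of $G^*$: both are governed by the $\mathscr{G}_1(F)$-versus-$\mathscr{G}_2(F)$ dichotomy, and one must use integrality of the edge count together with the strictly positive deficit to guarantee that suppressing the edge (in the case where retaining it would create a copy of $F$) never lowers the total below $e(G)$. Once this interplay is handled, the structural conclusion follows cleanly.
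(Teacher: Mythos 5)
Your overall strategy is the same as the paper's: compress the extremal graph $G$ into a cleanly structured graph without losing edges, powered by the strictly positive coefficients in \eqref{eq: k=2} (indeed $\ell-\frac{\ell+\ell_1}{2}+\frac12=\frac{\ell_2+1}{2}$ and $\ell-2\ell_1-\frac12=\ell_2-\ell_1-\frac12\ge\frac12$) together with integrality, and by the fact that $A\cup B$ has at least $r+h$ common neighbours in $G$, so that forbidden configurations in the modified graph can be pulled back into $G$. The paper does this in two stages (first delete $D_2'\cup D_3'$ and replace them by vertices fully joined to $A\cup B$; then fully join each vertex of $D_1'\setminus(D_0\cup\{u,v\})$, and finally, if an endpoint $u$ of the unique edge $uv$ of $G[D_1']$ has $d_{A\cup B}(u)<\ell-1$, delete $uv$ and join $u$ to all of $A\cup B$), whereas you rebuild $G^*$ in one shot with an explicit rule for the outside edge; that difference is harmless.

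However, two steps in your write-up are not actually proved. First, your verification that $G^*$ is $H$-free invokes Lemma \ref{lemma: no H_1 contains l1-1 vertices in AUB} ``applied to $G^*$''; this is circular, since the proof of that lemma presupposes that the host graph is $H$-free, which is exactly what you are trying to establish for $G^*$. The claim you need --- that each odd path $P_{2\ell_j+1}$ must place at least $\ell_j$ vertices in $A\cup B$ even when it uses the single outside edge --- is an elementary vertex-cover/parity count: permitting one uncovered edge splits $P_{2\ell_j+1}$ into two subpaths on $a$ and $b$ vertices with $a+b=2\ell_j+1$, one odd and one even, so a cover still needs $\lfloor a/2\rfloor+\lfloor b/2\rfloor=\ell_j$ vertices of $A\cup B$ (this is precisely why the lower-bound construction $H_2'(n,k,\ell,F)$ is $H$-free when $I_1=\emptyset$, and precisely where an even component would break it). Either make this count explicit or, as the paper does, avoid it altogether by pulling any copy of $H$ in $G^*$ back into $G$ via the common neighbours of $A\cup B$. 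Second, in the case $D_2'=D_3'=\emptyset$ your ``deficit plus integrality'' engine is unavailable (there is no deficit), and the bookkeeping you defer to the final paragraph is the actual content: if the unique edge $uv$ of $G[D_1']$ has both endpoints fully joined to $A\cup B$, then $G$ itself would contain $F$ unless $G[A\cup B]$ is $\mathscr{G}_2(F)$-free, so your $G^*$ retains the edge and $e(G^*)=e(G)$; while if some endpoint is not fully joined, then $e(A\cup B,V(G)\setminus(A\cup B))\le(\ell-1)(n-\ell+1)-1$ and the suppressed edge is absorbed by this slack. Both gaps are fixable along these lines, so your proposal is sound in outline and matches the paper's method, but as written these two steps are asserted rather than established.
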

    \begin{proof} Recall $v(F)=r$.
        Since $d(y)\ge n-c'$ for any  $y\in A\cup B$, we have $|N_G(A\cup B)|\ge r+h$. Suppose $D_2'\cup D_3'\neq \emptyset$. Then we  delete all  vertices in $D_2'\cup D_3'$ from $G$, add the same number of new vertices and connect each new vertex with all the vertices in $A\cup B$. We denote the new graph by $G'$. We will show that $G'$ is $\{H,F\}$-free.
        Suppose there is a copy of $H$ or $F$ in $G'$. Since $|N_G(A\cup B)|\ge r+h$, we can easily find a copy of $H$ or $F$ is $G$, a contradiction. By Claims 1 and 2, $e(G')\ge e(G)$ and then we may assume $|D_2'|=|D_3'|=0$.

         By Claims 1 and 2, $e(G[D_1'])\leq 1$. Suppose $uv\in E(G[D'])$ if $e(G[D_1'])= 1$. For all vertices in $D_1'\setminus (D_0\cup \{u,v\})$, we can add edges between them and $A\cup B$. Then the obtained graph is  $\{H,F\}$-free by the same argument. If $d_{A\cup B}(u)=d_{A\cup B}(v)=\ell-1$, then we are done. And if one of $u,v$, say $u$, with $d_{A\cup B}(u)<\ell-1$, we can delete $uv$ (if it exists) and add all edges between $u$ and $A\cup B$. Then the total number of edges will not decrease. Hence, we may assume $D_1'=D_0$.
    \end{proof}

    \begin{lemma}\label{D3}
        Besides the case when $k=2$ and $I_1=\emptyset$, we may assume $D_3=\emptyset$.
    \end{lemma}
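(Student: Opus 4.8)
The plan is to show that, outside the excluded case $k=2,\ I_1=\emptyset$, any extremal $G$ with $D_3\neq\emptyset$ can be strictly improved, so that the maximality of $e(G)=ex(n,\{H,F\})$ forces $D_3=\emptyset$ (this justifies the ``may assume'' in the statement). Concretely, I would delete every vertex of $D_3$ from $G$, introduce $|D_3|$ fresh vertices, and join each fresh vertex to all of $A\cup B$ and to nothing else; call the resulting graph $G'$. Each fresh vertex then behaves exactly like a vertex of $D_0$, the induced subgraph on $A\cup B\cup D_1\cup D_2$ is untouched, and every $y\in A\cup B$ keeps degree $\ge n-c'$ (it trades its $\le|D_3|$ neighbours in $D_3$ for all $|D_3|$ fresh vertices). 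Thus $G'$ again fits the earlier framework, now with $D_3(G')=\emptyset$.

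The first thing to verify is that $G'$ is still $\{H,F\}$-free, which I would do by reusing the substitution argument of Lemma~\ref{claim: k=2,D=D0}. Since $d(y)\ge n-c'$ for every $y\in A\cup B$ and $|A\cup B|=\ell-1$ by Lemma~\ref{lemma: common neighbours}, the common neighbourhood $D_0$ of $A\cup B$ in $G$ has size at least $n-(\ell-1)c'$, which for large $n$ far exceeds $v(H)+v(F)$. Any hypothetical copy of $H$ or $F$ in $G'$ meets the fresh vertices only along edges into $A\cup B$; replacing each used fresh vertex by a distinct unused genuine vertex of $D_0$ (all fully adjacent to $A\cup B$ in $G$), and keeping the rest of the copy, which lies in the unchanged part $A\cup B\cup D_1\cup D_2$, yields a copy of $H$ or $F$ in $G$, a contradiction. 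Hence $G'$ is $\{H,F\}$-free.

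The core of the argument is the edge count. Deleting $D_3$ removes exactly $e(D_3,A\cup B)+e(G[D_3])+e(D_3,D_1\cup D_2)$ edges, while the fresh vertices add $|D_3|(\ell-1)$ edges, so I must establish
\[
|D_3|(\ell-1)\ >\ e(D_3,A\cup B)+e(G[D_3])+e(D_3,D_1\cup D_2)
\]
whenever $D_3\neq\emptyset$. Each $v\in D_3$ has $d_{A\cup B}(v)\le\ell_k-1$, and $G[D_3]$ is $H_1$-free so $e(G[D_3])\le\frac{v(H_1)-2}{2}|D_3|\le(\ell_1-\tfrac12)|D_3|$ by Theorem~\ref{G}. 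When $I_1\neq\emptyset$, Lemma~\ref{lem: even, no D E} gives $e(D_3,D_1\cup D_2)=0$, so the right-hand side is at most $|D_3|\big((\ell_k-1)+(\ell_1-\tfrac12)\big)=|D_3|(\ell_k+\ell_1-\tfrac32)$; since $\ell\ge\ell_1+\ell_k$ (as $k\ge2$) this is strictly below $|D_3|(\ell-1)$. When $I_1=\emptyset$ (so $H_1=P_{2\ell_1+1}$), Lemma~\ref{lem: no P3 containing vertex in D E} gives $e(D_3,D_1\cup D_2)\le|D_3|$, so the right-hand side is at most $|D_3|(\ell_k+\ell_1-\tfrac12)$, which is strictly below $|D_3|(\ell-1)$ precisely when $\ell\ge\ell_1+\ell_k+1$, i.e. when $k\ge3$ (a middle contribution $\sum_{2\le i\le k-1}\ell_i\ge1$ is present). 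This is exactly where the hypothesis bites: for $k=2,\ I_1=\emptyset$ one has $\ell=\ell_1+\ell_2=\ell_1+\ell_k$ and the inequality can fail, which is why that case is excluded and handled separately above.

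Combining the two parts, if $D_3\neq\emptyset$ then $G'$ is $\{H,F\}$-free with $e(G')>e(G)$, contradicting maximality; therefore $D_3=\emptyset$. I expect the edge count, and specifically isolating the slack $\ell-1-(\ell_k+\ell_1-\tfrac12)$ that separates $k\ge3$ from $k=2$ in the $I_1=\emptyset$ regime, to be the main obstacle; the freeness of $G'$ is routine once the $D_0$-substitution template is in place.
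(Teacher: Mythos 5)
Your proposal is correct and takes essentially the same route as the paper: the paper performs the identical operation of deleting $D_3$ and adding the same number of new vertices joined to all of $A\cup B$, checks $\{H,F\}$-freeness by the same substitution into the large common neighbourhood of $A\cup B$, and verifies the edge count via the coefficients $\ell-1-\ell_k-\ell_1+\frac{3}{2}\ge 0$ in (\ref{eq: even}) and $\ell-1-\ell_1-\ell_k+\frac{1}{2}\ge 0$ in (\ref{eq: odd}), which encode exactly your per-vertex loss bounds $(\ell_k-1)+(\ell_1-\frac12)$ and $(\ell_k-1)+(\ell_1-\frac12)+1$. The only cosmetic differences are that you count the losses directly instead of citing those displayed inequalities, and you claim a strict increase (contradicting extremality) where the paper claims only non-decrease; both suffice for the ``may assume'' conclusion.
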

    \begin{proof}
   We first consider the case $I_1\neq \emptyset$. Assume $D_3\neq \emptyset$. Then we  delete all vertices in $D_3$ from $G$, add the same number of new vertices and connect them with all  vertices in $A\cup B$. Since $|N_G(A\cup B)|\ge r+h$, it is easy to check the obtained graph is still $\{H,F\}$-free. According to (\ref{eq: even}) and note that  $(\ell-1-\ell_k-\ell_1+\frac{3}{2})\geq 0$, the number of edges of the obtained graph does not decrease.

    Now we consider the case when $I_1= \emptyset$. By assumption, we just need to consider the case  $k \geq 3$. By (\ref{eq: odd}) and note that $\ell-1-\ell_1-\ell_k+\frac{1}{2}\geq 0$,  we can similarly assume $|D_3|=0$.\end{proof}

    \begin{lemma}\label{lem: D=D0}
        Besides the case when $k=2$ and $I_1=\emptyset$, we may assume $D_2=\emptyset$ and $D_1=D_0$.
    \end{lemma}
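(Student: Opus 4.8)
The plan is to obtain the clean structure by a single deletion-and-replacement operation, exactly in the spirit of Lemmas \ref{claim: k=2,D=D0} and \ref{D3}. Since we have excluded the case $k=2$, $I_1=\emptyset$, Lemma \ref{D3} lets me assume $D_3=\emptyset$, so that $V(G)\setminus(A\cup B)=D_1\cup D_2$ with $D_0\subseteq D_1$. Write $S=(D_1\cup D_2)\setminus D_0$ for the set of vertices outside $A\cup B$ that are \emph{not} adjacent to all of $A\cup B$; the goal is to show we may take $S=\emptyset$, since this is equivalent to $D_2=\emptyset$ and $D_1=D_0$. Every $v\in S$ satisfies $d_{A\cup B}(v)\le \ell-2$, because $v\notin D_0$ and $|A\cup B|=\ell-1$ by Lemma \ref{lemma: common neighbours}.

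First I would form $G'$ from $G$ by deleting all vertices of $S$, introducing $|S|$ new vertices, and joining each new vertex to every vertex of $A\cup B$. The freeness of $G'$ follows the template already used twice: each new vertex has its whole neighbourhood inside $A\cup B$, and since $d(y)\ge n-c'$ for all $y\in A\cup B$ the common neighbourhood $D_0$ (equivalently $N_G(A\cup B)$) has size far exceeding $r+h$. Hence any copy of $H$ or $F$ in $G'$ that uses new vertices can be transplanted onto unused vertices of $D_0$, which are adjacent to all of $A\cup B$ and therefore reproduce every required adjacency; this would yield a copy of $H$ or $F$ in $G$, contradicting that $G$ is $\{H,F\}$-free. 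Thus $G'$ is $\{H,F\}$-free, and by construction $D_2=\emptyset$ and $D_1=D_0$ in $G'$.

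The heart of the matter is the edge count $e(G')\ge e(G)$. The operation removes exactly the edges incident to $S$, namely $\sum_{v\in S}d_{A\cup B}(v)+e(G[S])+e(S,D_0)$, and adds $|S|(\ell-1)$. When $I_1\neq\emptyset$, Lemma \ref{lem: even, no D E} gives $e(G[D_1\cup D_2])=0$, so the removed quantity is at most $|S|(\ell-2)$ and the net change is at least $|S|>0$; extremality of $G$ then forces $S=\emptyset$ outright. When $I_1=\emptyset$ (so $k\ge 3$ and $\ell>\ell_1$), Lemma \ref{lem: no P3 containing vertex in D E} only gives that the edges inside $D_1\cup D_2$ form a matching, whence the edges of $G[D_1\cup D_2]$ incident to $S$ number at most $|S|$, i.e. $e(G[S])+e(S,D_0)\le |S|$. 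Combined with $\sum_{v\in S}d_{A\cup B}(v)\le |S|(\ell-2)$ this gives a net change of at least $|S|(\ell-1)-|S|(\ell-2)-|S|=0$. So $e(G')\ge e(G)=ex(n,\{H,F\})$, forcing equality, and replacing $G$ by the extremal graph $G'$ lets me assume $D_2=\emptyset$ and $D_1=D_0$. Note that edges lying inside $D_0$ are untouched by the operation, so this cleanup does not interfere with the possible extra edge that realizes the $D_{F,H}=1$ lower bound.

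The main obstacle I anticipate is the tight inequality in the case $I_1=\emptyset$: here the net change is exactly $0$, so there is no slack, and I must check that destroying a matching edge incident to $S$ is always compensated by the $\ell-1$ edges gained at a replacement vertex. This is precisely where $\ell\ge\ell_1+1$ (from $k\ge 3$) and the maximum-degree-one structure of $G[D_1\cup D_2]$ are both indispensable. A secondary routine check is that the degrees of vertices in $A\cup B$ are not pushed below the thresholds defining $A\cup B$: each $y\in A\cup B$ gains $|S|$ edges to the new vertices while losing at most $|S|$ edges to $S$, so its degree does not decrease and the set $A\cup B$ retains its defining properties in $G'$.
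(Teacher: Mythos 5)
Your proposal is correct and is essentially the paper's own argument: the paper likewise uses $D_3=\emptyset$, the matching structure $\Delta(G[D_1\cup D_2])\le 1$ to bound the at most one lost edge per deficient vertex against the at least one edge gained by joining it to all of $A\cup B$ (where $d_{A\cup B}(v)\le \ell-2$), and the large common neighbourhood of $A\cup B$ to transplant any copy of $H$ or $F$ and preserve $\{H,F\}$-freeness. The only cosmetic difference is that you delete and replace all deficient vertices in one batch (as in Lemmas \ref{D3} and \ref{claim: k=2,D=D0}), whereas the paper rewires each vertex $v$ individually by removing its single edge in $G[D_1\cup D_2]$ and adding all edges to $A\cup B$; the two bookkeepings are interchangeable.
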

    \begin{proof} By Lemma \ref{D3},
         $D_3=\emptyset$. By Lemmas \ref{lem: even, no D E} and \ref{lem: no P3 containing vertex in D E}, we have $d_{D_1\cup  D_2 }(y)\leq 1$ for all $y\in D_1\cup D_2$. Assume there exists $v\in D_1\cup D_2$ with $d_{A\cup B}(v)<\ell-1$.  Since $d_{D_1\cup  D_2 }(v)\leq 1$, we can delete the edge incident to $v$ in $G[D_1\cup  D_2]$ (if it exists) and add all the edges between $v$ and $A\cup B$. The obtained graph is still $\{H,F\}$-free and the number of edges does not decrease.
    \end{proof}
    \begin{lemma}\label{lem: number of edge in D}
        Besides the case when $k=2$ and $I_1=\emptyset$, we may assume $e(G[D_1])\leq 1$.
    \end{lemma}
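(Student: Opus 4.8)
The plan is to show that the asserted bound is actually forced: if $e(G[D_1])\ge 2$, then $G$ already contains a copy of $H$, contradicting $H$-freeness. First I would record the structure inherited from the preceding lemmas. Since we are outside the excluded case $k=2,\ I_1=\emptyset$, Lemmas \ref{D3} and \ref{lem: D=D0} give $D_2=D_3=\emptyset$ and $D_1=D_0$, so that $V(G)=(A\cup B)\cup D_0$ with $|A\cup B|=\ell-1$ and every vertex of $D_0$ adjacent to all of $A\cup B$. In other words $G$ contains the complete bipartite graph $K_{\ell-1,\,n-\ell+1}$ with parts $A\cup B$ (small) and $D_0$ (large), together with whatever edges lie inside $D_1=D_0$. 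If $I_1\neq\emptyset$, then Lemma \ref{lem: even, no D E} already yields $e(G[D_1])=0$, so the claim is immediate; hence the real work is the case $I_1=\emptyset$, where every $H_i$ is an odd path and (by the exclusion) $k\ge 3$, and where $\ell_k\ge 2$ since $H\neq kP_3$.

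Next, assuming $I_1=\emptyset$ and $e(G[D_1])\ge 2$, I would build $H$ by absorbing two large-side edges into the longest component $H_k=P_{2\ell_k+1}$ so as to save one small vertex. The key counting fact is that in a path alternating between the two sides of a bipartite graph, each extra edge on the large side lets two consecutive path-vertices sit on the large side; for an \emph{odd} path one such edge alone saves nothing, but two large-side adjacencies on a single odd path reduce its demand on $A\cup B$ from $\ell_k$ to $\ell_k-1$. Concretely, let $f_1,f_2$ be two distinct edges of $G[D_0]$. If $f_1=u_1u_2$ and $f_2=u_3u_4$ are disjoint, the sequence $u_1u_2\,s_1\,u_3u_4\,s_2\,w_1\,s_3\,w_2\cdots$ (with $s_i\in A\cup B$ and $w_i\in D_0$ alternating thereafter) is a valid $P_{2\ell_k+1}$ using exactly $\ell_k-1$ vertices of $A\cup B$; if $f_1=u_1u_2$ and $f_2=u_2u_3$ share $u_2$, the sequence $u_1u_2u_3\,s_1\,w_1\,s_2\,w_2\cdots$ does the same. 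Both are legitimate because $u_2,u_3,u_4$ and all the $w_i$ lie in $D_0$ and hence are joined to every $s_i\in A\cup B$; here $\ell_k\ge 2$ guarantees $P_{2\ell_k+1}$ is long enough to carry both edges.

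Finally I would embed the remaining components greedily. After $H_k$ has consumed $\ell_k-1$ vertices of $A\cup B$, each of the other $k-1$ odd paths $H_j=P_{2\ell_j+1}$ is routed as an ordinary zig-zag with both endpoints in $D_0$, using exactly $\ell_j$ vertices of $A\cup B$; in total these consume $\sum_{j<k}\ell_j=\ell-\ell_k$ further small vertices. The grand total on the small side is $(\ell_k-1)+(\ell-\ell_k)=\ell-1=|A\cup B|$, so every small vertex is used exactly once, while the large vertices are drawn from $D_0$, of which there are $n-\ell+1\gg v(H)$, so disjointness is automatic. Since all $A\cup B$–$D_0$ edges are present, this assembles a copy of $H$ in $G$, the desired contradiction, and therefore $e(G[D_1])\le 1$. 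The one delicate point — the main obstacle — is the parity bookkeeping: one must verify that two large-side edges, whether adjacent or independent, can be \emph{jointly} absorbed into a single odd path of length at least $5$ to gain the single vertex needed. A lone large-side edge provably does not suffice for an odd path, which is precisely why in the case $I_1=\emptyset$ the correct bound is $e(G[D_1])\le 1$ rather than $e(G[D_1])=0$.
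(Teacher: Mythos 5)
Your proof is correct, and its central trick is the same as the paper's: two edges inside $D_0$ let one odd component $P_{2\ell_k+1}$ (available since $\ell_k\ge 2$, as $H\neq kP_3$) be routed using only $\ell_k-1$ vertices of $A\cup B$, while the case $I_1\neq\emptyset$ is dispatched by Lemma \ref{lem: even, no D E}. Where you genuinely differ is in how the contradiction is completed. The paper takes two \emph{independent} edges $u_1v_1,u_2v_2$ (adjacent edges are already ruled out by $\Delta(G[D_1])\le 1$ from Lemma \ref{lem: no P3 containing vertex in D E}), builds a single copy of $H_k$ with $v_1,v_2$ as leaves and only $\ell_k-1$ vertices in $A\cup B$, and contradicts Lemma \ref{lemma: no H_1 contains l1-1 vertices in AUB}, whose proof is an edge-counting argument. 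You instead finish self-containedly: using the structure from Lemmas \ref{lemma: common neighbours}, \ref{D3} and \ref{lem: D=D0} ($|A\cup B|=\ell-1$, $D_2=D_3=\emptyset$, $D_1=D_0$ complete to $A\cup B$), you absorb both $D_0$-edges into $H_k$ and then greedily embed the remaining $k-1$ odd components with $\ell_j$ small vertices each, for a total of $(\ell_k-1)+\sum_{j<k}\ell_j=\ell-1=|A\cup B|$, which yields an explicit copy of $H$ and contradicts $H$-freeness directly; your parity bookkeeping (both for disjoint and for adjacent $D_0$-edges, the latter making you independent of Lemma \ref{lem: no P3 containing vertex in D E}) checks out. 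Both routes are sound: the paper's is shorter because it leans on the previously proved counting lemma, while yours is more elementary and makes the extremal phenomenon transparent --- a single $D_0$-edge saves nothing on an odd path, exactly matching the lower-bound graph $H_2'(n,k,\ell,F)$ and explaining why the correct bound is $e(G[D_1])\le 1$ rather than $0$.
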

   \begin{proof}
       According to Lemma \ref{lem: D=D0}, $d_{A\cup B}(v)=|A\cap B|=\ell-1$ for any $v\in D_1$. By Lemmas \ref{lem: even, no D E} and \ref{lem: no P3 containing vertex in D E}, $\Delta(G[D_1])\le1$. If there exist two independent edges $u_1v_1,u_2v_2$ in $G[D_1]$, then $|N(u_1)\cap N(u_2)|=\ell-1\geq \ell_k$. Since $H_k\neq P_3$, we can find a copy of $H_k$, say $H'_k$ contained $v_1,v_2$ as leaf vertices, such that $|V(H'_k)\cap (A\cup B)|=\ell_k-1$, a contradiction with Lemma   \ref{lemma: no H_1 contains l1-1 vertices in AUB}.
   \end{proof}
   \vskip.2cm
   Now we are going to prove our main results.
     \vskip.2cm
    \noindent\textit{Proof of Theorem \ref{thm: linear forest and beta 2}}.
    We first consider the case  $I_1\neq \emptyset$. By Lemmas \ref{lem: even, no D E}, \ref{D3} and \ref{lem: D=D0}, we have
    $$e(G)= e(G[A\cup B])+(\ell-1)(n-\ell+1).$$By Lemma \ref{lemma: common neighbours}, $|A\cup B|=\ell-1$.
    Since $d(y)\ge n-c'$ for any  $y\in A\cup B$, we have $|N(A\cup B)|\ge r+h$. Since $G$ is $\{F,H\}$-free,  we have $e(G[A\cup B])\leq ex(\ell-1,\mathcal{G}_1(F))$. Thus
    $$e(G)\leq (\ell-1)(n-\ell+1)+ex(\ell-1,\mathcal{G}_1(F)).$$

   Now we consider the case $I_1=\emptyset$.  By Lemmas \ref{claim: k=2,D=D0} and \ref{lem: D=D0}, we have  $$e(G)= e(G[A\cup B])+ (\ell-1)(n-\ell+1)+e(G[D_1])$$ and $e(G[D_1])\leq 1$ by Lemma \ref{lem: number of edge in D}. If $e(G[D_1])=0$, we can similarly have $e(G[A\cup B])\leq ex(\ell-1,\mathcal{G}_1(F))$. If $e(G[D_1])=1$, then we have $e(G[A\cup B])\leq ex(\ell-1,\mathcal{G}_2(F)).$
    Above all,
    \begin{equation*}
        \begin{aligned}
            e(G)= &e(G[A\cup B])+e(G[D_1])+(\ell-1)(n-\ell+1)\\
            \leq &(\ell-1)(n-\ell+1)+\max\{ex(\ell-1,\mathcal{G}_1(F)),ex(\ell-1,\mathcal{G}_2(F))+1\}.
        \end{aligned}
    \end{equation*}
    \QEDopen

 \noindent\textit{Proof of Theorem \ref{thm: linear forest and beta 1}}
 Similarly, we have
    $$e(G)= e(G[A\cup B])+e(G[D_1])+\left(\ell-1\right)\left(n-\ell+1\right).$$
    Recall that when $\beta_1(F)=1$, $F$ is a subgrapgh of $B_{r-2}$ where $r=v(F)$ and $B_{r-2}$ is the graph constructed by $r-2$ triangles sharing one edge.
    Suppose $e(G[A\cup B])\geq 1$, say $u_1u_2\in E(G[A\cup B])$.
   Since $|N(A\cup B)|\geq h+r$, $|N(u_1)\cap N(u_2)|\ge h+r$. Thus there is a copy of $B_{r-2}$ in $G$ which implies $G$ has a copy of $F$, a contradiction.

    We claim that if $(F,H)$ has property $\mathscr{R}$, then $e(G[D_1])=0$. By Lemma \ref{lem: even, no D E}, we just need to consider that
     $I_1=\emptyset$. Suppose $e(G[D_1])\geq 1$ and $w_1w_2\in E(G[D_1])$.
    If $r\leq \ell+1$, by Lemmas \ref{claim: k=2,D=D0} and \ref{lem: D=D0}, we can similarly find a copy of $F$ in $G$, a contradiction. Since $(F,H)$ has property $\mathscr{R}$, we have
     $N(F,K_3)=1$ and $f(F)\leq \ell$. Let $v\in A\cup B$. By Lemmas \ref{claim: k=2,D=D0} and \ref{lem: D=D0}, $vw_1w_2v$ is a copy of $K_3$.  Since $f(F)\leq \ell$, $d_G(v)\geq \ell+r$ and $d_G(w_i)\geq |A\cup B|+1=\ell$ for $i=1,2$, we can find a copy of $F$ in $G$, a contradiction. Hence, $e(G[D_1])=0$ if $(F,H)$ has property $\mathscr{R}$. Thus, if  $(F,H)$ has property $\mathscr{R}$, we have
    $$e(G)\leq\left(\ell-1\right)\left(n-\ell+1\right).$$

    \noindent If $(F,H)$ does not have property $\mathscr{R}$, we have
    $$e(G)\leq \left(\ell-1\right)\left(n-\ell+1\right)+1.$$
Thus, the end of the proof.
\QEDopen
\section*{Acknowledgement}
This work is  supported by  the National Natural Science Foundation of China~(Grant 12171272 \& 12426603).

\section*{Declaration of competing interest}
The authors declare that they have no known competing financial interests or personal relationships that could have appeared to influence the work reported in this paper.

\section*{Data availability}
No data was used for the research described in the article.

\end{document}